 \newtheorem{theorem}{Theorem}[section]
 \newtheorem{lemma}[theorem]{Lemma}
 \newtheorem{proposition}[theorem]{Proposition}
 \newtheorem{corollary}[theorem]{Corollary}
 \newtheorem*{theorem*}{Theorem}
\theoremstyle{definition}
 \newtheorem{definition}[theorem]{Definition}
\theoremstyle{remark}
 \newtheorem{remark}{Remark} 
 \newtheorem{example}{Example} 
\newcommand{\la}{\langle}
\newcommand{\ra}{\rangle}
\newcommand{\HdC}{\mathbf H^{2}_{\mathbb C}}
\newcommand{\bp}{{\bf p}}
\newcommand{\bn}{{\bf n}}
\newcommand{\bq}{{\bf q}}
\newcommand{\br}{{\bf r}}
\renewcommand{\arg}{\mbox{arg}}
\newcommand{\A}{{{\mathbb A}}}
\newcommand{\X}{{{\mathbb X}}}
\newcommand{\C}{{{\mathbb C}}}
\newcommand{\R}{{{\mathbb R}}}
\newcommand{\Z}{{{\mathbb Z}}}
\newcommand{\tr}{{{\rm tr}}}
\begin{document}

\pagenumbering{arabic}
\title{Complex hyperbolic free groups with many parabolic elements}
\author{John R Parker \\
Department of Mathematical Sciences\\
Durham University\\
South Road\\
Durham 3H1 3LE, England\\
e-mail: {\tt j.r.parker@durham.ac.uk}
\and 
Pierre Will\footnote{Author supported by ANR project SGT. This work was 
done during stays of the first author in Grenoble, and of the second in 
Durham that were funded by ANR project SGT.}\\
Institut Fourier\\
Universit\'e Grenoble I\\
100 rue des Maths\\
38042 St-Martin d'H\`eres, France\\
e-mail: {\tt pierre.will@ujf-grenoble.fr}}
%\author{John R Parker and Pierre Will}
\date{\today}
\maketitle

\begin{abstract}
\noindent
We consider in this work representations of the of the fundamental group of 
the 3-punctured sphere in
${\rm PU}(2,1)$ such that the boundary loops are mapped to ${\rm PU}(2,1)$. 
We provide a system of coordinates on the 
corresponding representation variety, and analyse more specifically those 
representations corresponding
to subgroups of $(3,3,\infty)$-groups. In particular we prove that it is 
possible to construct 
representations of the free group of rank two $\la a,b\ra$ in ${\rm PU}(2,1)$ 
for which $a$, $b$, $ab$, $ab^{-1}$, 
$ab^2$, $a^2b$ and $[a,b]$ all are mapped to parabolics. 
\end{abstract}
\maketitle

\section{Introduction}
In this paper we consider representations of $F_2$, the free group of rank two, into ${\rm SU}(2,1)$. The 
latter group is a three-fold covering of ${\rm PU}(2,1)$, which is the holomorphic isometry
group of complex hyperbolic two-space ${\bf H}^2_{\mathbb C}$. The 
deformation space which is the space of conjugacy classes of
representations 
$$
{\mathcal R}={\rm Hom}\bigl(F_2,{\rm SU}(2,1)\bigr)/{\rm SU}(2,1).
$$
It is not hard to see that the dimension of this space is the same
as that of ${\rm SU}(2,1)$, namely four complex dimensions or eight
real dimensions. We will be particularly interested in those representations
with many parabolic elements. The locus of points in ${\mathcal R}$ 
where a given group element is parabolic is an algebraic real hypersurface.
In particular, if $a$ and $b$ is a given generating set for
$F_2=\langle a,b\rangle$ then we shall only consider those representations 
$\rho\in{\mathcal R}$ for which $A=\rho(a)$, $B=\rho(b)$ and 
$AB=\rho(ab)$ are all parabolic. We say that such a representation
of $F_2$ to ${\rm SU}(2,1)$ is \textit{parabolic}. Since these three conditions are independent,
this means that the space of parabolic representations is a five dimensional subspace of ${\mathcal R}$. 
We will very often use the alternate presentation $F_2=\la a,b,c\vert abc=1\ra$, which gives an 
identification of $F_2$ with the fundamental group of the 3-punctured sphere. With this identification, 
parabolic representations of $F_2$ are thus representations of the 3-punctured sphere mapping peripheral
loops to parabolics. It is a well known fact that there is only one such representation in PSL(2,$\C$) up 
to conjugacy. We will describe here the corresponding deformation space for SU(2,1).

Before giving our main results, we now indicate our motivation. There
is a beautiful description of the ${\rm SU}(2,1)$ representation space
of closed surface groups due to Goldman \cite{Gol1,Gol2}, Toledo 
\cite{T} and Xia \cite{X}. 
Of particular interest are {\sl complex hyperbolic quasi-Fuchsian} 
representations of a surface group to ${\rm SU}(2,1)$; see Parker-Platis
\cite{PP} for a survey on this topic. In particular, Parker and Platis,
Problem 6.2 of Parker-Platis \cite{PP}, ask whether the boundary of complex 
hyperbolic quasi-Fuchsian space comprises representations with 
parabolic elements and they ask which parabolic maps can arise. 
We can consider a decomposition of the surface into three-holed spheres
and then allow the three boundary curves to be pinched, so they are
represented by parabolic elements. The fundamental group of a
three holed sphere is a free group on two generators 
$F_2=\langle a,b\rangle$. The condition that the three boundary curves are
pinched is exactly that $A=\rho(a)$, $B=\rho(b)$ and 
$AB=\rho(ab)$ should all be parabolic. 

If $AB$ is parabolic then, of course, the product $BA$ is 
parabolic as well, and considering its fixed point we obtain an ideal 
tetrahedron in $\HdC$ (an ordered $4$-tuple of boundary points). 
The shape of the tetrahedron $\tau_\rho=(p_A,p_B,p_{AB},p_{BA})$ is a conjugacy 
invariant of the representation $\rho$ that we are going to use to give a 
coordinate system on the family of conjugacy classes of representations. 
Moreover the shape of a tetrahedron $\tau_\rho$ for a parabolic 
representation $\rho$ can not be arbitrary. Indeed we prove that if $\rho$ 
is a parabolic representation of $F_2$ to ${\rm SU}(2,1)$ then $\tau_\rho$ 
is \textit{balanced}. An ideal tetrahedron $(p_1,p_2,p_3,p_4)$ is balanced when 
$p_3$ and $p_4$ are mapped to the same point by the orthogonal projection 
onto the geodesic connecting $p_1$ and $p_2$.  To see this, we connect the 
shape of the tetrahedron to the conjugacy classes of $\rho(a)$, $\rho(b)$ and 
$\rho(ab)$ via the complex cross-ratio  $\X(p_A,p_B,p_{AB},p_{BA})$ 
(see \cite{KR}). 
More precisely, we prove in Corollary \ref{3spherecrossratio} that when 
$\rho$ is parabolic we have:
\begin{equation}\label{eq-cr-eval}
 \X(p_A,p_B,p_{AB},p_{BA})=\lambda_A\lambda_B\lambda_C,
\end{equation}
where $C=(AB)^{-1}$ and $\lambda_A$, $\lambda_B$ and $\lambda_C$ are 
respectively the eigenvalues associated to the  boundary fixed points of 
$A$, $B$ and $C$. As $A$, $B$ and $C$ are parabolic, these eigenvalues all 
have unit modulus, which implies that the cross-ratio also has unit modulus. 
This condition is equivalent to saying that the tetrahedron $\tau_\rho$ is 
balanced, as proved in Section \ref{balancedsection}.

The next question is the converse. Given a balanced ideal tetrahedron $\tau$, 
and given three 
unit complex numbers $\lambda_A$, $\lambda_B$ and $\lambda_C$ such that 
\eqref{eq-cr-eval} holds, can 
we construct a parabolic representation $\rho:F_2\longrightarrow{\rm PU}(2,1)$ 
such that $\tau=\tau_\rho$ as before?  The answer is yes, if we allow that $A$, 
$B$ and $C$ are parabolic or complex reflections. This ambiguity comes from 
the fact that an isometry having a boundary fixed point with unit modulus 
eigenvalue can be either parabolic or a complex reflection (see section 
\ref{section-conj-class}). This is Proposition \ref{repEP}.

We focus next on the case where the three (unit modulus) eigenvalues 
$\lambda_A$, $\lambda_B$ and $\lambda_C$ all are equal. From \eqref{eq-cr-eval}
they are necessarily all the same cube root of the cross ratio. 
We show that such a representation admits a three fold symmetry.
In particular, it is a subgroup of a $(3,3,\infty)$ group generated
by two regular elliptic maps $J_1$ and $J_2$ or order 3 whose product
$J_1J_2$ is parabolic.
Specifically, we prove (Theorem \ref{thm-symmetric-groups}):

\begin{theorem*}
Suppose that $\rho: F_2=\langle a,b,c\ |\ abc=id\rangle\longrightarrow
{\rm SU}(2,1)$ has the property that $\rho(a)$, $\rho(b)$, $\rho(c)$ are
all parabolic and have the same eigenvalues. Then $\rho(F_2)$ is an
index 3 subgroup of a ${\rm SU}(2,1)$ representation of the $(3,3,\infty)$
group.
\end{theorem*}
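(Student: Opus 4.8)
The plan is to exhibit an explicit order-3 symmetry of the group $\rho(F_2)$ and identify the $(3,3,\infty)$ group as the one obtained by adjoining this symmetry. Write $A=\rho(a)$, $B=\rho(b)$, $C=\rho(c)$, so that $ABC=\mathrm{id}$ and all three are parabolic with the same eigenvalues. The hypothesis $\lambda_A=\lambda_B=\lambda_C$ forces a high degree of symmetry on the configuration of the three boundary fixed points $p_A$, $p_B$, $p_C$. I would first record, using Corollary \ref{3spherecrossratio} and the analysis of Section \ref{balancedsection}, that the three fixed points $p_A,p_B,p_C$ form a symmetric triple, in the sense that their pairwise Hermitian triple products are invariant under cyclic permutation. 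Concretely, I expect the three parabolics to be cyclically conjugate: there should exist an isometry $M$ with $MAM^{-1}=B$, $MBM^{-1}=C$, $MCM^{-1}=A$, which geometrically rotates the ideal triangle $(p_A,p_B,p_C)$.

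The key construction is to produce this $M$ and show it has order $3$. First I would normalise coordinates (using the $\mathrm{PU}(2,1)$ action) so that the three fixed points sit in a standard symmetric position, e.g.\ as the images of a fixed vector under powers of a diagonal order-$3$ matrix; the equal-eigenvalue condition is exactly what makes such a normalisation consistent. Then $M$ is read off as the regular elliptic element of order $3$ cyclically permuting these three fixed points and intertwining the matrices $A,B,C$ with the correct eigenvalue data. Once $M$ is in hand, the equation $ABC=\mathrm{id}$ together with $B=MAM^{-1}$ and $C=M^2AM^{-2}$ gives $A\cdot MAM^{-1}\cdot M^2AM^{-2}=\mathrm{id}$, i.e.\ $(MA)^3=\mathrm{id}$ after rearranging. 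Setting $J_1=M$ and $J_2=(MA)^{-1}$ (or a similar pairing) yields two order-$3$ elements whose product $J_1J_2$ is conjugate to $A$, hence parabolic, so that $\langle J_1,J_2\rangle$ is an $\mathrm{SU}(2,1)$ representation of the $(3,3,\infty)$ group.

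Finally I would verify that $\rho(F_2)=\langle A,B\rangle$ sits as an index-$3$ subgroup of $\Gamma=\langle J_1,J_2\rangle$. Since $A=J_1J_2\in\Gamma$ and $B=MAM^{-1}=J_1AJ_1^{-1}\in\Gamma$, we have $\rho(F_2)\le\Gamma$. The quotient is generated by the image of $M=J_1$, which has order $3$ modulo $\rho(F_2)$ because $M^3=\mathrm{id}$ while $M,M^2\notin\rho(F_2)$ (the latter by a coset/abelianisation count, since $M$ conjugates $A$ to $B$ nontrivially); this gives index exactly $3$. The main obstacle I anticipate is the very first step: rigorously producing the symmetry $M$ and proving it is a genuine isometry of order $3$ rather than merely a formal permutation of fixed points. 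This requires showing that the equal-eigenvalue hypothesis pins down the relative Hermitian geometry of $p_A,p_B,p_C$ tightly enough that the cyclic permutation is realised by an element of $\mathrm{SU}(2,1)$ --- essentially a rigidity statement that the triple product and the eigenvalue data determine the configuration up to conjugacy, which is where the balanced-tetrahedron computation from \eqref{eq-cr-eval} does the real work.
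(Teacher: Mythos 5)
Your high-level strategy is really the paper's strategy viewed from the other end: the symmetry $M$ you postulate is exactly $J_1^{-1}$ in the paper's notation (once $A=J_1J_2$ and $B=J_2J_1$ are known, one gets $J_1AJ_1^{-1}=C$, $J_1CJ_1^{-1}=B$, $J_1BJ_1^{-1}=A$), and the reductions you perform \emph{after} $M$ exists --- the computation $(MA)^3={\rm id}$ from $ABC={\rm id}$ and $M^3={\rm id}$, the identification $\langle M,A\rangle=\langle J_1,J_2\rangle$, and the normality argument for the index --- are correct. The genuine gap is the step you yourself flag, and the repair you sketch for it cannot work, because it rests on a false rigidity statement. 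You propose to prove that the canonical order-3 rotation of the ideal triangle $(p_A,p_B,p_C)$ intertwines $A$, $B$, $C$ by arguing that ``the triple product and the eigenvalue data determine the configuration up to conjugacy''. They do not. A parabolic in ${\rm PU}(2,1)$ is far from determined by its fixed point and its eigenvalue there: the unipotent parabolics fixing a given boundary point form a three-parameter (Heisenberg) family, all with the same eigenvalue. At the level of representations the data (triangle, eigenvalues) is one real dimension short: by Proposition \ref{prop-tet-params} and Proposition \ref{repEP}, fixing the Cartan invariant of the triangle fixes $\theta$, and fixing the common eigenvalue $\lambda$ with $\lambda^3=\X(p_A,p_B,p_{AB},p_{BA})=e^{-2i\theta-2i\phi}$ then fixes $\phi$, but the bending parameter $\psi$ of the tetrahedron $\tau(\theta,\phi,\psi)$ of Definition \ref{defitau} remains completely free. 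Running Proposition \ref{repEP} over $\psi$ with these fixed $\theta$, $\phi$, $\lambda$ produces a one-parameter family of representations, pairwise non-conjugate because their bending parameters $\mathbb{B}$ differ, yet all with the same fixed-point triangle and the same equal eigenvalues (and for generic $\psi$ the generators are genuinely parabolic rather than complex reflections, by Proposition \ref{u-isom}). So no normalisation of the three points alone can let you ``read off'' $M$ and verify $MAM^{-1}=B$: after your normalisation, the matrices $A$ and $B$ still contain a parameter that the triangle and the eigenvalues cannot see. (Your preliminary observation that the Hermitian triple products of $(p_A,p_B,p_C)$ are cyclically invariant is also vacuous --- that holds for every triple of points.)

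The ingredient that closes the gap --- and it is exactly what the paper uses --- is the fourth vertex $p_{BA}$ together with the uniqueness statement of Proposition \ref{u-isom}: an isometry is pinned down by a neutral fixed point, the eigenvalue there, \emph{and the image of one further point}. The generators carry such mapping data through the tetrahedron, namely $A(p_{BA})=p_{AB}$ and $B(p_{AB})=p_{BA}$. Accordingly, the paper defines $J_1$ and $J_2$ as the order-3 maps cyclically permuting the triples $(p_B,p_A,p_{AB})$ and $(p_A,p_B,p_{BA})$ of the \emph{balanced tetrahedron}, observes that $A'=J_1J_2$ fixes $p_A$ and sends $p_{BA}$ to $p_{AB}$, uses the conjugacy of $A'$, $B'$, $C'$ under $J_1$ together with Lemma \ref{eigencross} and Corollary \ref{3spherecrossratio} (your \eqref{eq-cr-eval}) to conclude that the eigenvalue of $A'$ at $p_A$ is the same cube root of the cross-ratio as $\lambda_A$ --- this is where the balanced (unit-modulus) condition does the work --- and only then invokes uniqueness to conclude $A=J_1J_2$ and $B=J_2J_1$ (Theorem \ref{thm-symmetric-groups}). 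If you replace your rigidity step by this argument, i.e.\ build the symmetry from the tetrahedron rather than from the triangle and check the intertwining via Proposition \ref{u-isom} using the mapping conditions on $p_{AB}$ and $p_{BA}$, the rest of your proof goes through. One last point: your parenthetical reason that $M\notin\rho(F_2)$ (``since $M$ conjugates $A$ to $B$ nontrivially'') is not an argument --- inner automorphisms can do that --- so the claim that the index is exactly $3$ rather than $1$ still needs justification; to be fair, the paper's own statement of Theorem \ref{thm-symmetric-groups} is silent on this as well.
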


This leads to our main result connecting the representation to
geometry of complex hyperbolic space, (Theorem \ref{thm-main1}):

\begin{theorem*}
There is a bijection between the set of ${\rm PU}(2,1)$-orbits of 
non-degenerate balanced ideal tetrahedra, and the set of 
${\rm PU}(2,1)$-conjugacy classes of $(3,3,\infty)$ groups in ${\rm PU}(2,1)$.
\end{theorem*}
Using a normalisation of balanced tetrahedra, we obtain an explicit 
parametrisation of the order 3 generators of $\la 3,3,\infty\ra$-group. 
Next, we investigate when more group elements are parabolic. In particular, we can prove
(Corollary \ref{cor-superpinching}):
\begin{theorem*}
There is a one parameter family of groups generated $A$ and $B$
in ${\rm SU}(2,1)$ so that $A$, $B$, $AB$, $AB^{-1}$, $AB^2$, 
$A^2B$ and $[A,B]$ are all parabolic.
\end{theorem*}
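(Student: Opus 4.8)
The plan is to work entirely inside the explicitly parametrised family of symmetric parabolic representations furnished by Theorem~\ref{thm-symmetric-groups}, namely the index-$3$ subgroups $\la A,B\ra$ of the $(3,3,\infty)$-groups. By construction the peripheral elements $A$, $B$ and $C=(AB)^{-1}$ are parabolic, so $A$, $B$ and $AB$ are already dealt with, and after the normalisation of balanced tetrahedra this family carries three real parameters. Imposing parabolicity of the four remaining words $AB^{-1}$, $AB^2$, $A^2B$ and $[A,B]$ looks like four real conditions, hence over-determined; the decisive first step is to use the $3$-fold symmetry to collapse the first three into one.

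Let $R\in{\rm SU}(2,1)$ be the order-$3$ element realising the cyclic symmetry $A\mapsto B\mapsto C\mapsto A$, which exists exactly because $ABC=1$. Conjugation by $R$ permutes $AB^2\mapsto BC^2\mapsto CA^2\mapsto AB^2$, so these three words are mutually conjugate in ${\rm SU}(2,1)$. A direct cancellation using $C=B^{-1}A^{-1}$ gives $CA^2=B^{-1}A$, which is conjugate to $AB^{-1}$, and $BC^2=A^{-1}B^{-1}A^{-1}$, which is conjugate to $(A^2B)^{-1}$. Since parabolicity is preserved by both conjugation and inversion, it follows that $AB^{-1}$, $AB^2$ and $A^2B$ are parabolic simultaneously. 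The four extra requirements thus reduce to just two: that $AB^{-1}$ be parabolic and that $[A,B]$ be parabolic.

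Next I would carry out the dimension count together with an explicit solution. Write $z_1=\tr(AB^{-1})$ and $z_2=\tr[A,B]$ as functions of the three real parameters. Parabolicity is controlled by the trace discriminant
\[
 f(z)=|z|^4-8\,\Re(z^3)+18\,|z|^2-27,
\]
and each surviving condition is the single real equation $f(z_i)=0$; note that $f$ is invariant under complex conjugation, consistent with the symmetry relations above. Two real equations on a three-parameter family cut out a curve, and I would exhibit it by solving $f(z_1)=0$ and $f(z_2)=0$ simultaneously, thereby producing a one-parameter arc along which, by the previous step, $AB^2$ and $A^2B$ are parabolic as well.

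The hard part will be upgrading the trace condition $f(z_i)=0$ to genuine parabolicity. As noted in the discussion preceding Proposition~\ref{repEP}, a trace lying on $\{f=0\}$ corresponds to a boundary fixed point carrying a unit-modulus eigenvalue, and such an isometry may be either parabolic or a complex reflection; the discriminant alone cannot separate the two cases. The real obstacle is therefore to verify, all along the constructed arc, that $AB^{-1}$ and $[A,B]$ (and hence $AB^2$, $A^2B$) are parabolic rather than boundary elliptic, which I would do by inspecting the eigenvector and fixed-point data of the corresponding matrices and excluding the complex-reflection normal form. One must also confirm that the solution set is a genuine non-empty continuum rather than a set of isolated points, i.e. that the two loci $\{f(z_1)=0\}$ and $\{f(z_2)=0\}$ meet in a one-dimensional arc; this too is checked directly from the explicit parametrisation.
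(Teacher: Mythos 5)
Your reduction of the seven parabolicity conditions to two is correct, and it is in substance identical to the paper's: the symmetry you call $R$ is just $J_1^{-1}$, and conjugating by it yields exactly the paper's identities $AB^{-1}=[J_1,J_2]$ and $[A,B]=(J_1J_2^{-1})^3$ together with the conjugacies that handle $AB^2$ and $A^2B$. One imprecision here: $R$ does not exist ``exactly because $ABC=1$'' --- for a generic parabolic representation no isometry induces the cyclic permutation of $A$, $B$, $C$. It exists because you restricted to the symmetric family $\lambda_A=\lambda_B=\lambda_C$ (Theorem \ref{thm-symmetric-groups}); since you declared that restriction at the outset, the conclusion stands.

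The genuine gap is that everything after the reduction is deferred, and what is deferred is the actual content of the theorem. First, you never establish that the two loci $\{f(z_1)=0\}$ and $\{f(z_2)=0\}$ meet at all: ``two real equations in three parameters cut out a curve'' is a dimension heuristic, and two hypersurfaces in a three-dimensional parameter space can perfectly well be disjoint. The paper's proof of Theorem \ref{thm-superpinching} is precisely this verification: in the coordinates \eqref{eq-new-params} the condition $f\bigl({\rm tr}(J_1J_2^{-1})\bigr)=0$ is \eqref{eq-tr-J1J2m-delto}; substituting it into \eqref{eq-tr-comm} and setting $X=(x-z)^2$, $Y=(x-z)y$ turns $f\bigl({\rm tr}[J_1,J_2]\bigr)=0$ into a polynomial equation $P(X,Y)=0$, and an intermediate value argument ($P(X,Y)>0$ for large $Y$, while $P(X,0)=(2X^2-2X+27)(2X^2-18X+27)^3\le 0$ exactly when $(9-3\sqrt{3})/2\le X\le (9+3\sqrt{3})/2$) produces the one-parameter family. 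Nothing in your outline substitutes for this computation. Second, your plan for excluding complex reflections --- inspecting eigenvector data ``all along the constructed arc'' --- cannot be carried out as stated: the arc is only known implicitly as a zero set, so there is no normal form to inspect, and you would need the arc to exist before running the check, whereas the exclusion is needed \emph{before} one may interpret $f=0$ as parabolicity. The paper avoids this circularity with an a priori geometric argument (Proposition \ref{prop-A-parab-identity} and Corollary \ref{cor-J1J2i-comm-not para}): if any of $J_1J_2$, $J_1J_2^{-1}$, $[J_1,J_2]$ had a neutral boundary fixed point and were a complex reflection, the Cartan cocycle relation combined with the balanced condition would force $p_{AB}=p_{BA}$, i.e.\ $J_2=J_1^{-1}$, a degenerate group. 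Without these two ingredients your proposal is an outline of the paper's strategy, not a proof.
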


It would be very interesting to find out whether any (or all) of these 
representations are discrete and free, and also whether or not
it is possible to find any more parabolic elements.

\section{Fixed point tetrahedra of thrice punctured sphere groups}

We refer the reader to \cite{ChGr,Goldbook,Parbook} for basic material 
on the complex hyperbolic space. We will denote by $\A$ and $\X$ respectively 
the Cartan 
invariant (see Chapter 7 of \cite{Goldbook}) and the complex cross-ratio 
(see \cite{KR}, and Chapter 7 of \cite{Goldbook}).

\subsection{Conjugacy classes in ${\rm PU}(2,1)$.
\label{section-conj-class}}
We recall that the group of holomorphic isometries of the complex hyperbolic 
is ${\rm PU}(2,1)$. Elements of ${\rm PU}(2,1)$ are classified by the usual 
trichotomy : loxodromic, elliptic and parabolic isometries. As in the 
classical cases of ${\rm PSL}(2,\R)$ and ${\rm PSL}(2,\C)$ it is 
possible to detect the types using the trace of a lift of an element of 
${\rm PU}(2,1)$ to ${\rm SU}(2,1)$. However certain subtleties arise here 
that we would like to describe as they will play a role in our work. Let us 
first recall the trace classification of isometries (Theorem 6.4.2 of 
\cite{Goldbook}).

\begin{proposition}\label{prop-class-trace}
Let $A$ be a non trivial element of ${\rm PU}(2,1)$ and ${\bf A}$ a lift of 
it to ${\rm SU}(2,1)$. We denote by $f$ the polynomial function given by 
$f(z)=|z|^4 - 8{\rm Re}(z^3)+18|z |^2-27$. Then
\begin{enumerate}
\item The isometry $A$ is loxodromic if and only if $f(\tr {\bf A})>0$.
\item It is \textit{regular elliptic} if and only if $f(\tr {\bf A})<0$.
\item It is parabolic or a complex reflection if and only if $f(\tr {\bf A})=0$.
\end{enumerate}
\end{proposition}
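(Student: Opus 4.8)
The plan is to read everything off the characteristic polynomial of a lift $\mathbf A$ and to recognise $f$ as its discriminant. First I would record the shape of this polynomial. Write $\tau=\tr\mathbf A$ and let $H$ be the Hermitian matrix of signature $(2,1)$ preserved by ${\rm SU}(2,1)$, so that $\mathbf A^{-1}=H^{-1}\mathbf A^{*}H$. Since $\mathbf A^{-1}$ is thus conjugate to $\mathbf A^{*}$, its eigenvalues are the complex conjugates of those of $\mathbf A$; equivalently the eigenvalue multiset $\{\lambda_1,\lambda_2,\lambda_3\}$ is invariant under $\lambda\mapsto 1/\bar\lambda$. Together with $\det\mathbf A=\lambda_1\lambda_2\lambda_3=1$ this forces the second symmetric function to equal $\sum 1/\lambda_i=\sum\bar\lambda_i=\bar\tau$, so that
\[
\chi(\lambda)=\lambda^{3}-\tau\lambda^{2}+\bar\tau\lambda-1 .
\]

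Next I would identify $f(\tau)$ with the discriminant of $\chi$. Applying the classical formula for the discriminant of $\lambda^{3}+p\lambda^{2}+q\lambda+r$, namely $18pqr-4p^{3}r+p^{2}q^{2}-4q^{3}-27r^{2}$, with $p=-\tau$, $q=\bar\tau$, $r=-1$, gives exactly $|\tau|^{4}-8\,\Re(\tau^{3})+18|\tau|^{2}-27=f(\tau)$. Hence $f(\tau)=\Delta:=\prod_{i<j}(\lambda_i-\lambda_j)^{2}$; in particular $f$ is real-valued and vanishes precisely when $\chi$ has a repeated root.

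I would then exploit the symmetry $\lambda\mapsto 1/\bar\lambda$, which permits only two patterns. Either all three eigenvalues lie on the unit circle, or there is a pair $\{\lambda,1/\bar\lambda\}$ with $|\lambda|\neq1$ together with a third eigenvalue which, being sent to itself, must have unit modulus. A repeated eigenvalue off the circle is impossible, since $\lambda=1/\bar\lambda$ already forces $|\lambda|=1$; thus $\Delta=0$ occurs only in the first pattern, when two unit-modulus eigenvalues coincide. Matching the patterns to the geometric trichotomy through the fixed-point picture: a pair of null eigenvectors with eigenvalues off the unit circle yields two boundary fixed points, so $A$ is loxodromic; three distinct unit-modulus eigenvalues correspond to a negative eigenvector fixed in $\HdC$ with three distinct rotation factors, so $A$ is regular elliptic; a repeated unit-modulus eigenvalue is the degenerate case, where $\mathbf A$ is either non-diagonalisable (parabolic) or diagonalisable (a complex reflection), the two being indistinguishable by $\tau$ alone — which is exactly why the third alternative groups them together.

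It remains to pin down the sign. Writing $\Delta=V^{2}$ with $V=\prod_{i<j}(\lambda_i-\lambda_j)$ the Vandermonde, reality of $\Delta$ forces $V$ to be real or purely imaginary. Using $\bar\lambda_i=1/\lambda_{\sigma(i)}$ and $\lambda_1\lambda_2\lambda_3=1$, a short computation of $\bar V$ shows that in the all-circle case $\sigma$ is trivial and $\bar V=-V$, so $V$ is purely imaginary and $f(\tau)=\Delta<0$; in the loxodromic case $\sigma$ transposes the off-circle pair and $\bar V=+V$, so $f(\tau)=\Delta>0$. Combined with $\Delta=0$ exactly on the degenerate locus, this yields the three equivalences. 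The main obstacle, beyond the bookkeeping, is the geometric identification in the preceding step — turning the arithmetic of eigenvalue moduli into statements about fixed points in $\overline{\HdC}$ — together with the honest observation that the parabolic/complex-reflection ambiguity cannot be removed by the trace, so the third alternative is necessarily a disjunction.
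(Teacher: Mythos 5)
Your argument is correct, and it is essentially the paper's own: the paper offers no proof of this proposition, quoting it as Theorem 6.4.2 of Goldman's book, and your route --- self-duality of the spectrum under $\lambda\mapsto 1/\bar\lambda$ forcing $\chi(\lambda)=\lambda^{3}-\tau\lambda^{2}+\bar\tau\lambda-1$, the identification of $f(\tau)$ with the discriminant of $\chi$, and the sign analysis of the Vandermonde (purely imaginary in the all-unit-circle case, real in the loxodromic case) --- is exactly the standard proof of that cited theorem. Your closing remark that the trace alone cannot separate parabolics from complex reflections matches the point the paper makes right after the statement, where the ambiguity is resolved by semisimplicity of the lift, as used later in the proof of Proposition \ref{u-isom}.
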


An elliptic isometry $A$ is called \textit{regular} if and only if any 
lift ${\bf A}$ to ${\rm SU}(2,1)$ has three pairwise distinct eigenvalues. 
Whenever an elliptic isometry is not regular it is called a 
\textit{complex reflection}. The set of fixed points in $\HdC$ of a complex 
reflection can be either 
a point or complex line (see \cite{Goldbook} for details). Note that a complex 
reflection does not necessarily 
have finite order, contrary to the usual terminology in real spaces.

A parabolic isometry $P$ is called unipotent whenever it admits a unipotent 
lift ${\bf P}\in{\rm SU}(2,1)$. There are two conjugacy classes of unipotent 
parabolics, namely 2-step or 3-step unipotents, depending on the nilpotency 
index of ${\bf P} - I$. A non unipotent parabolic map is called 
\textit{screw-parabolic}. The spectrum of the lift of a parabolic is always 
of the kind $\lbrace e^{i\alpha},e^{i\alpha},e^{-2i\alpha}\rbrace$ for some 
$\alpha\in\R$. When $\alpha=0$, the parabolic is unipotent. Therefore the 
traces of parabolic isometries form a curve in $\C$, given by 
$\lbrace 2e^{i\alpha}+e^{-2i\alpha},\alpha\in\R\rbrace$, which is depicted in 
Figure \ref{parabcurve}. We will often refer to this curve as \textit{the 
deltoid}.
In view of Proposition \ref{prop-class-trace}, 
this curve is the zero-locus of the polynomial $f$. However 
Proposition \ref{prop-class-trace} tells us that if $f(\tr {\bf A})=0$, 
then we need more information to know the type of the isometry $A$, as it 
could be a complex reflection. This can be done by using the fact that lifts 
of complex reflections  are semi-simple whereas those of parabolics are not 
(see the proof of Proposition \ref{u-isom}). 

\begin{figure}
\begin{center}
\includegraphics[scale=0.3]{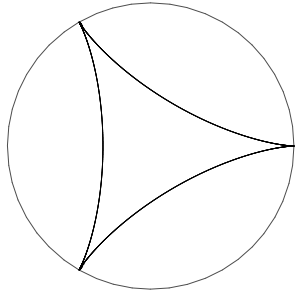}
\caption{The null locus of $f$ and the circle $\{|z|=3\}$.\label{parabcurve}} 
\end{center}
\end{figure}

\subsection{Fixed points, eigenvalues, cross-ratios}
\begin{definition}
  We will call \textit{parabolic} any representation
  $\rho:F_2=\la a,b,c\ :\ abc=id\ra\longrightarrow {\rm PU}(2,1)$ 
which maps $a$, $b$ and $c$ (thus
  $ab$ and $ba$) to parabolic isometries. We will denote by
  $\mathcal{P}$ the set of parabolic representations of $F_2$.
\end{definition}
We will denote by $A$, $B$ and $C$ the images by $\rho$ of $a$, $b$ and $c$, 
and by $p_A$, $p_B$ and $p_C$ 
their boundary fixed points (for a parabolic representation).
\begin{definition}
  Let $\rho:F_2\longrightarrow {\rm PU}(2,1)$ be a parabolic representation. We will call 
\textit{fixed point tetrahedron of $\rho$} and denote by $\tau_{\rho}$ the ideal tetrahedron
  $(p_A,p_B,p_{AB},p_{BA})$.
\end{definition}
If $A$ is in ${\rm PU}(2,1)$, with fixed point $p$, we will denote by the same
letter in bold font $\bp$ a lift of $p$ to $\C^3$.
\begin{definition}
  If $A\in{\rm SU}(2,1)$ fixes projectively $p_A$, we say that $\lambda_A$ is
  the eigenvalue of $A$ associated to $p$ if $A\bp_A=\lambda_A\bp_A$.
\end{definition}
The following lemma provides an identity connecting eigenvalues with 
cross ratios and angular invariant of fixed points that will play an 
important role in our discussion. We refer the reader to \cite{KR} or to 
Chapter 7 of \cite{Goldbook} for the basic definitions concerning the 
Kor\'anyi-Riemann cross-ratio of four points, which we will denote by $\X$
and the Cartan angular invariant of three points, which we denote by $\A$ . 
\begin{lemma}\label{eigencross}
  Let $A$ and $B$ be in ${\rm PU}(2,1)$. Let $p_A$ and $p_B$ be fixed points
  of $A$ and $B$ with eigenvalues $\lambda_A$ and $\lambda_B$. Let
  $p_{AB}$ and $p_{BA}$ be fixed points of $AB$ and $BA$ such that
  $Ap_{BA}=p_{AB}$. Denote by $\lambda_{AB}$ the corresponding
  eigenvalue of $AB$. Assume that the four points
  $(p_A,p_B,p_{AB},p_{BA})$ are pairwise distinct.
\begin{enumerate}
\item The eigenvalues of $AB$ and $BA$ associated with $p_{AB}$ and
  $p_{BA}$ are equal.
\item The four points $p_A$, $p_B$, $p_{AB}$, $p_{BA}$ satisfy the
  following cross-ratio identity.
\begin{equation}
\X(p_A,p_B,p_{AB},p_{BA})
=\dfrac{1}{\overline{\lambda}_A\overline{\lambda}_B\lambda_{AB}}
\end{equation}
\item Taking the principal determination of the argument, we have 
$$\arg\left(\X(p_A,p_B,p_{AB},p_{BA})\right)=\A(p_A,p_B,p_{AB})-\A(p_A,p_B,p_{BA})\quad [2\pi].$$
\end{enumerate}
\end{lemma}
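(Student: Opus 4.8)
The plan is to pass to null lifts in $\Cdu$ and to exploit the invariance of the Hermitian form under ${\rm SU}(2,1)$, together with the fact that both $\X$ and $\A$ are products of Hermitian pairings $\Herm{\cdot,\cdot}$ that do not depend on the scaling of the lifts. First I would fix lifts $\bp_A,\bp_B,\bp_{AB},\bp_{BA}$; since the four points are pairwise distinct, their pairwise pairings are nonzero, so $\X(p_A,p_B,p_{AB},p_{BA})$ is well defined. For part (1) I would use that $A$ conjugates $BA$ to $AB$, because $A(BA)A^{-1}=AB$. Writing $A\bp_{BA}=\mu\,\bp_{AB}$ for some nonzero $\mu$ (legitimate since $Ap_{BA}=p_{AB}$) and applying $A$ to the eigenrelation $(BA)\bp_{BA}=\lambda_{BA}\bp_{BA}$ gives $(AB)(A\bp_{BA})=\lambda_{BA}(A\bp_{BA})$; comparing with $(AB)\bp_{AB}=\lambda_{AB}\bp_{AB}$ forces $\lambda_{BA}=\lambda_{AB}$.

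For part (2) I would first record the companion relation $Bp_{AB}=p_{BA}$, which follows from $Bp_{AB}=B(Ap_{BA})=(BA)p_{BA}=p_{BA}$, and lift it to $B\bp_{AB}=\nu\,\bp_{BA}$. Composing the two lifted relations yields $(BA)\bp_{BA}=\mu\nu\,\bp_{BA}$, so by part (1) one has $\mu\nu=\lambda_{AB}$. Then I would expand $\X(p_A,p_B,p_{AB},p_{BA})$ as a ratio of four pairings and, in the pairing of $\bp_{AB}$ against $\bp_A$, substitute $\bp_{AB}=\mu^{-1}A\bp_{BA}$, and in the pairing of $\bp_{BA}$ against $\bp_B$, substitute $\bp_{BA}=\nu^{-1}B\bp_{AB}$. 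Invariance of the form turns each substitution into a pairing that cancels against the denominator, while the eigenvectors $\bp_A$ and $\bp_B$, sitting in the anti-linear slot, contribute the conjugated factors $\overline{\lambda}_A^{-1}$ and $\overline{\lambda}_B^{-1}$. What remains is $1/(\mu\nu\,\overline{\lambda}_A\overline{\lambda}_B)=1/(\overline{\lambda}_A\overline{\lambda}_B\lambda_{AB})$, as claimed.

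Part (3) I would obtain by direct comparison. Expanding $\A(p_A,p_B,p_{AB})-\A(p_A,p_B,p_{BA})$ from the definition of the Cartan invariant as the argument of a signed triple product of pairings, the common factor $\Herm{\bp_A,\bp_B}$ and the two sign conventions drop out, leaving the argument of a quotient of four pairings. Dividing this quotient by the one defining $\X$ and using Hermitian symmetry $\Herm{\bp_B,\bp_{AB}}=\overline{\Herm{\bp_{AB},\bp_B}}$, one finds that the result is a ratio of squared moduli of pairings, hence a positive real; therefore the two arguments coincide modulo $2\pi$.

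The only delicate point throughout is the bookkeeping of the lift-dependent scalars $\mu,\nu$ and of which slot of the form is anti-linear, so that the conjugations land on the correct eigenvalues. The genuinely structural step is part (1): it is precisely the identity $\mu\nu=\lambda_{AB}$ that converts the otherwise arbitrary lift ambiguity into the eigenvalue $\lambda_{AB}$, and thereby makes the right-hand side of part (2) a conjugacy invariant.
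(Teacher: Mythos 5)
Your proof is correct and follows essentially the same route as the paper: the same lift scalars $\mu,\nu$ satisfying $\mu\nu=\lambda_{AB}$, the same use of invariance of the Hermitian form under $A$ and $B$ for part (2), and the same comparison of the cross-ratio with the Cartan triple products (via a positive real factor $|\la\bp_{AB},\bp_B\ra|^2/|\la\bp_{BA},\bp_B\ra|^2$) for part (3). The only differences are cosmetic: you substitute the lifted relations into the numerator of $\X$ rather than rewriting the denominator, and you spell out the derivation of $Bp_{AB}=p_{BA}$ and of part (1), which the paper leaves as a one-line remark.
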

The last part of Lemma \ref{eigencross} has nothing to do with $A$ and $B$, and is a general property of ideal 
tetrahedra in $\HdC$. One should be careful to write this equality only up to  a multiple of $2\pi$, as noted by 
Cunha and Gusevkii in \cite{CuGu}. 
\begin{proof}
  The first part of the Lemma is a direct consequence of
  $AB=A(BA)A^{-1}$.  Because $Ap_{BA}=p_{AB}$ and $Bp_{AB}=p_{BA}$,
  there exists complex numbers $\mu$ and $\nu$ such that
$$A\bp_{BA}=\mu \bp_{AB}\mbox{ and }B\bp_{AB}=\nu \bp_{BA}.$$ 
But any lift $\bp_{AB}$ of $p_{AB}$ satisfies
$AB\bp_{AB}=\lambda_{AB}\bp_{AB}$.  This implies that
$\lambda_{AB}=\mu\nu$. Let us compute the cross ratio. We use the fact that
$A$ and $B$ preserve the Hermitian form.
\begin{eqnarray*}
\X(p_A,p_B,p_{AB},p_{BA}) 
& = & \dfrac{\la \bp_{AB},\bp_A\ra\la\bp_{BA},\bp_B\ra}
{\la\bp_{AB},\bp_B\ra\la\bp_{BA},\bp_{A}\ra} \\
& = & \dfrac{\la \bp_{AB},\bp_A\ra\la\bp_{BA},\bp_B\ra}
{\la B\bp_{AB},B\bp_B\ra\la A\bp_{BA},A\bp_{A}\ra} \\
& = &\dfrac{\la \bp_{AB},\bp_A\ra\la\bp_{BA},\bp_B\ra}
{\overline{\lambda}_A\overline{\lambda}_B\mu\nu\la \bp_{BA},
\bp_B\ra\la\bp_{AB},\bp_A\ra}\\
& = & \dfrac{1}{\overline{\lambda}_A\overline{\lambda}_B\lambda_{AB}}.
\end{eqnarray*}
Finally, 
$$
\X(p_A,p_B,p_{AB},p_{BA}) 
=\dfrac{
|\la\bp_{BA},\bp_B\ra|^2
\la \bp_{AB},\bp_A\ra\la\bp_A,\bp_B\ra\la\bp_B,\bp_{AB}\ra}
{|\la\bp_{AB},\bp_B\ra|^2
\la\bp_{BA},\bp_{A}\ra\la\bp_A,\bp_B\ra\la\bp_B,\bp_{BA}\ra}. 
$$
The result follows by taking argument on both sides since, by definition we have:
\begin{eqnarray*}
\A(p_A,p_B,p_{AB}) & = & \arg\bigl(-
\la \bp_{AB},\bp_A\ra\la\bp_A,\bp_B\ra\la\bp_B,\bp_{AB}\ra\bigr), \\
\A(p_A,p_B,p_{BA}) & = & \arg\bigl(-
\la\bp_{BA},\bp_{A}\ra\la\bp_A,\bp_B\ra\la\bp_B,\bp_{BA}\ra\bigr).
\end{eqnarray*}
\end{proof}
Let us rephrase Lemma \ref{eigencross} for a parabolic representation.
\begin{corollary}\label{3spherecrossratio}
  Let $A$ and $B$ be two parabolic isometries such that $AB$ (and thus
  $BA$) are both parabolic with fixed points on $\partial\HdC$
  $p_{A}$, $p_B$, $p_{AB}$ and $p_{BA}$. Then 
\begin{enumerate}
\item The cross ratio $\X(p_A,p_B,p_{AB}, p_{BA})$ has unit
  modulus. 
\item Moreover, setting $C=(AB)^{-1}$ and denoting by $\lambda_C$ the
  eigenvalue of $C$ associated with $p_{AB}$ it holds
  $\X(p_A,p_B,p_{AB},p_{BA})=\lambda_A\lambda_B\lambda_C.$
 \end{enumerate}
\end{corollary}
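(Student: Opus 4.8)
The plan is to derive Corollary \ref{3spherecrossratio} directly from Lemma \ref{eigencross}, which does all the heavy lifting. The corollary is simply the specialisation of the lemma to the parabolic case, together with an identification of the eigenvalue $\lambda_{AB}$ of $AB$ with a conjugate of the eigenvalue $\lambda_C$ of $C=(AB)^{-1}$.

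First I would establish part (1), the unit modulus claim. Since $A$, $B$ and $AB$ are all parabolic, every lift of each of them to ${\rm SU}(2,1)$ has spectrum of the form $\{e^{i\alpha},e^{i\alpha},e^{-2i\alpha}\}$, as recalled in Section \ref{section-conj-class}; in particular the eigenvalues $\lambda_A$, $\lambda_B$ and $\lambda_{AB}$ associated with the boundary fixed points all have unit modulus. Applying part (2) of Lemma \ref{eigencross} gives
\begin{equation*}
\X(p_A,p_B,p_{AB},p_{BA})=\frac{1}{\overline{\lambda}_A\overline{\lambda}_B\lambda_{AB}},
\end{equation*}
and since $|\lambda_A|=|\lambda_B|=|\lambda_{AB}|=1$ the right-hand side has modulus $1$. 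This immediately yields (1).

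For part (2) I would rewrite the right-hand side using $|\lambda|=1\Rightarrow\overline\lambda=\lambda^{-1}$, so that $1/(\overline{\lambda}_A\overline{\lambda}_B\lambda_{AB})=\lambda_A\lambda_B\lambda_{AB}^{-1}$. It then remains to identify $\lambda_{AB}^{-1}$ with $\lambda_C$. Here $C=(AB)^{-1}$, so $C$ fixes the same boundary point $p_{AB}$ as $AB$; the key observation is that a lift of $C$ is the inverse of a lift of $AB$, whence its eigenvalue associated with $p_{AB}$ is the reciprocal of $\lambda_{AB}$, i.e. $\lambda_C=\lambda_{AB}^{-1}$. (One must check that the chosen lifts are compatible, but any lift of $AB$ in ${\rm SU}(2,1)$ has a well-defined inverse which is a lift of $C$, and the eigenvalue on a fixed eigenvector simply inverts.) Substituting gives $\X(p_A,p_B,p_{AB},p_{BA})=\lambda_A\lambda_B\lambda_C$, which is exactly the asserted identity \eqref{eq-cr-eval}.

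The only genuinely delicate point is the bookkeeping of which eigenvalue is attached to which fixed point and which lift, since the statement of Lemma \ref{eigencross} fixes $\lambda_{AB}$ as the eigenvalue of $AB$ at $p_{AB}$ under the normalisation $Ap_{BA}=p_{AB}$. I expect the main obstacle to be making the passage from $\lambda_{AB}$ to $\lambda_C$ rigorous while respecting this normalisation: one needs to verify that identifying $\lambda_C=\lambda_{AB}^{-1}$ is consistent with the convention in the Definition fixing eigenvalues of lifts, rather than a genuine computation. Everything else is a direct substitution into the already-proved Lemma \ref{eigencross}.
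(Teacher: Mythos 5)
Your proof is correct and follows essentially the same route as the paper's: the paper also deduces the corollary directly from Lemma~\ref{eigencross}, citing the identity $\lambda_C=\lambda_{AB}^{-1}$ and the fact that eigenvalues of parabolic maps have unit modulus. Your additional care in justifying $\lambda_C=\lambda_{AB}^{-1}$ via lifts to ${\rm SU}(2,1)$ merely makes explicit what the paper leaves as immediate.
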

\begin{proof}
It is a direct consequence of $\lambda_C=\lambda_{AB}^{-1}$ and of the fact 
that eigenvalues of parabolics are unit complex numbers.
\end{proof}
\subsection{Balanced ideal tetrahedra.\label{balancedsection}}
\begin{definition}
  Let $\tau=(p_1,p_2,p_3,p_4)$ be an ideal tetrahedron and $\pi_{12}$
  be the orthogonal projection onto the (real) geodesic
  $\gamma_{12}=(p_1p_2)$. We will say that $\tau$ is \textit{balanced}
  whenever the images of $p_3$ and $p_4$ under $\pi_{12}$ are equal.
\end{definition}
\begin{definition}
We denote by $\mathcal{B}$ the set of balanced ideal tetrahedra.
\end{definition}
We will use the following choice for the cross ratio:
$$\X(p_1,p_2,p_3,p_4)=\dfrac{\la\bp_3,\bp_1\ra\la\bp_4,\bp_2\ra}
{\la\bp_3,\bp_2\ra\la\bp_4,\bp_1\ra}.$$
\begin{proposition}\label{unitCR}
An ideal tetrahedron $\tau$ is balanced if and only if the cross-ratio
$\X(p_1,p_2,p_3,p_4)$ has unit modulus.
\end{proposition}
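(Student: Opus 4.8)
The plan is to reduce both conditions to a single real quantity recording where a boundary point projects onto $\gamma_{12}$. First I would fix null lifts $\bp_1,\bp_2\in\Cdu$ of the endpoints and normalise them so that $\la\bp_1,\bp_2\ra=1$; this is legitimate because two distinct null lines in signature $(2,1)$ are never orthogonal (the Witt index is $1$, so there is no $2$-dimensional isotropic subspace), whence $\la\bp_1,\bp_2\ra\neq0$. With this normalisation the vectors $w_r=\bp_1-r\bp_2$ satisfy $\la w_r,w_r\ra=-2r$, and as $r$ runs over $(0,\infty)$ the points $[w_r]$ trace out exactly the real geodesic $\gamma_{12}$, running from $p_1$ (as $r\to 0$) to $p_2$ (as $r\to\infty$).

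Next I would compute the orthogonal projection onto $\gamma_{12}$. For any $z$ the hyperbolic distance from $[z]$ to $[w_r]$ is controlled by the ratio $|\la z,w_r\ra|^2/\la w_r,w_r\ra$, so locating the foot of the perpendicular amounts to minimising $g(r)=|\la z,w_r\ra|^2/(2r)$ over $r>0$. Writing $a=\la z,\bp_1\ra$ and $b=\la z,\bp_2\ra$ and using that $r$ is real gives $g(r)=\tfrac12\bigl(|a|^2/r-2\Re(a\overline b)+r|b|^2\bigr)$, whose unique critical point on $(0,\infty)$ is $r=|a|/|b|$. Applying this to the null lifts $\bp_3$ and $\bp_4$ shows that $p_3$ and $p_4$ project to the points of $\gamma_{12}$ with parameters $r_3=|\la\bp_3,\bp_1\ra|/|\la\bp_3,\bp_2\ra|$ and $r_4=|\la\bp_4,\bp_1\ra|/|\la\bp_4,\bp_2\ra|$.

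Finally I would compare the two sides. By definition $\tau$ is balanced exactly when $r_3=r_4$, whereas the modulus of the chosen cross-ratio is $|\X(p_1,p_2,p_3,p_4)|=\bigl(|\la\bp_3,\bp_1\ra|/|\la\bp_3,\bp_2\ra|\bigr)\bigl(|\la\bp_4,\bp_2\ra|/|\la\bp_4,\bp_1\ra|\bigr)=r_3/r_4$; note that this ratio is manifestly independent of the chosen lifts. Hence $|\X|=1$ if and only if $r_3=r_4$, which is precisely the balanced condition. The step needing the most care is the middle one: one must justify that minimising $g$ genuinely locates the foot of the perpendicular dropped from an \emph{ideal} point, since the distance formula degenerates on $\partial\HdC$. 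I would handle this either by realising the projection of $p_3$ as the limit of the feet of perpendiculars from interior points tending to $p_3$, or directly as the value of $r$ minimising the relevant Busemann-type ratio, and then invoke non-degeneracy of $\tau$ to guarantee $a,b\neq0$, so that the minimum is attained in the open interval $(0,\infty)$.
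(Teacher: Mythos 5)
Your proof is correct and follows essentially the same route as the paper's: both identify the foot of the perpendicular from a boundary point onto $\gamma_{12}$ with the parameter $|\la\bp,\bp_1\ra|/|\la\bp,\bp_2\ra|$, so that balancedness becomes $r_3=r_4$ while $\left|\X(p_1,p_2,p_3,p_4)\right|=r_3/r_4$. The only difference is one of packaging: the paper simply quotes the closed-form expression for the projection $\pi_{12}$, valid on the closure of $\HdC$, whereas you rederive it by minimising the distance (Busemann-type) ratio and extend to ideal points by continuity, which is a slightly more self-contained version of the same argument.
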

An immediate corollary of Proposition \ref{unitCR} and Lemma \ref{eigencross}
is:
\begin{corollary}
 Let $\rho: F_2\longrightarrow{\rm PU}(2,1)$ be a parabolic representation. 
The tetrahedron $\tau_\rho$ is balanced.
\end{corollary}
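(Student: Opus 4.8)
The plan is to chain the two results just proved, applied to the specific tetrahedron $\tau_\rho$. Since $\rho$ is parabolic, the isometries $A=\rho(a)$, $B=\rho(b)$ and $AB=\rho(ab)$ are all parabolic with boundary fixed points $p_A$, $p_B$, $p_{AB}$, $p_{BA}$, so $\tau_\rho=(p_A,p_B,p_{AB},p_{BA})$ is an ideal tetrahedron with vertices on $\partial\HdC$. To invoke Lemma \ref{eigencross} I first need these four points to be pairwise distinct; I would assume the fixed point tetrahedron is non-degenerate (the degenerate configurations being treated as boundary cases), which is exactly the standing hypothesis of that lemma.

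The key step is to read off the modulus of the cross-ratio. By part (2) of Lemma \ref{eigencross}, equivalently by Corollary \ref{3spherecrossratio}, one has
$$\X(p_A,p_B,p_{AB},p_{BA})=\frac{1}{\overline{\lambda}_A\,\overline{\lambda}_B\,\lambda_{AB}}.$$
Because $A$, $B$ and $AB$ are parabolic, the spectral description recalled in Section \ref{section-conj-class} (the spectrum of a parabolic lift is $\{e^{i\alpha},e^{i\alpha},e^{-2i\alpha}\}$) shows that the eigenvalue attached to each of their boundary fixed points is a unit complex number. Hence $|\lambda_A|=|\lambda_B|=|\lambda_{AB}|=1$, and the displayed identity forces $|\X(p_A,p_B,p_{AB},p_{BA})|=1$.

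Finally, I would invoke Proposition \ref{unitCR}, which characterises balanced ideal tetrahedra as precisely those whose cross-ratio has unit modulus. Applying it to $\tau_\rho$, whose cross-ratio we have just shown to have modulus one, yields immediately that $\tau_\rho$ is balanced. I do not expect any genuine obstacle here: all the substance sits in the two inputs---the eigenvalue/cross-ratio identity of Lemma \ref{eigencross} and the geometric characterisation of balancedness in Proposition \ref{unitCR}---and the corollary is merely their composition specialised to the fixed point tetrahedron, together with the observation that parabolicity forces the relevant eigenvalues onto the unit circle.
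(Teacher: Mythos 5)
Your proof is correct and follows exactly the route the paper intends: the corollary is stated there as an immediate consequence of Lemma \ref{eigencross} (equivalently Corollary \ref{3spherecrossratio}), which gives $\bigl|\X(p_A,p_B,p_{AB},p_{BA})\bigr|=1$ since parabolic eigenvalues are unit complex numbers, combined with the characterisation of balancedness in Proposition \ref{unitCR}. Your explicit flagging of the pairwise-distinctness hypothesis of Lemma \ref{eigencross} is a reasonable touch the paper leaves implicit.
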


\begin{proof}(Proposition \ref{unitCR}.).
  Choose lifts $\bp_1$ and $\bp_2$ of $p_1$ and $p_2$ in such a way
  that $\la \bp_1,\bp_2\ra=-1$. Then the projection of a point $z$ in the
  closure of $\HdC$ onto $\gamma_{12}$ is given by
\begin{equation}
  \pi_{12}(z)=\sqrt{\dfrac{|\la {\bf z},\bp_2\ra|}
{|\la {\bf z},\bp_1\ra|}}\bp_1+\sqrt{\dfrac{|\la {\bf z},\bp_1\ra|}
{|\la {\bf z},\bp_2\ra|}}\bp_2.
\end{equation}
Note that this expression does not depend on the chosen lift for $z$. Therefore
the condition $\pi_{12}(p_3)=\pi_{12}(p_4)$ is equivalent to the two
relations obtained by identifying the $\bp_1$ and $\bp_2$ components
of $\pi_{12}(p_3)$ and $\pi_{12}(p_4)$. This gives (after squaring
both sides of the equality)

$$ \dfrac{|\la\bp_3,\bp_2\ra|}{|\la\bp_3,\bp_1\ra|} 
= \dfrac{|\la\bp_4,\bp_2\ra|}{|\la\bp_4,\bp_1\ra|}\, \mbox{ and } \,
  \dfrac{|\la\bp_3,\bp_1\ra|}{|\la\bp_3,\bp_2\ra|}
 = \dfrac{|\la\bp_4,\bp_1\ra|}{|\la\bp_4,\bp_2\ra|}
$$

These two relations are clearly both equivalent to

$$ \left|\dfrac{\la\bp_3,\bp_1\ra\la\bp_4,\bp_2\ra}
{\la\bp_3,\bp_2\ra\la\bp_4,\bp_1\ra} \right|=\left|\X(p_1,p_2,p_3,p_4)\right|=1
$$
\end{proof}

By applying an element of ${\rm PU}(2,1)$ if necessary, we may assume that
\begin{equation}\label{standard-points}
\bp_1=\begin{bmatrix}1\\0\\0\end{bmatrix}, \quad 
\bp_2=\begin{bmatrix}0\\0\\1\end{bmatrix}, \quad 
\bp_3=\begin{bmatrix}
-e^{2i\theta} \\ \sqrt{2\cos(2\theta)}e^{i\theta-i\psi} \\ 1 \end{bmatrix}, \quad
\bp_4=\begin{bmatrix}
-r^2e^{-2i\phi} \\ r\sqrt{2\cos(2\phi)}e^{-i\phi+i\psi} \\ 1 \end{bmatrix}
\end{equation}
where $r>0$, $2\theta\in [-\pi/2,\pi/2]$, $2\phi\in[-\pi/2,\pi/2]$ and 
$\psi\in[0,\pi/2]$. The tetrahedron is completely determined up to
${\rm PU}(2,1)$ equivalence by the parameters $r$, $\theta$, $\phi$
and $\psi$. We want to now give an invariant interpretation of these
parameters. First observe that
$2\theta=\A(p_2,p_1,p_3)$ and $2\phi=\A(p_1,p_2,p_4)$. 

\begin{lemma}
In the above normalisation, the tetrahedron $(p_1,p_2,p_3,p_4)$ is
balanced if and only if $r=1$.
\end{lemma}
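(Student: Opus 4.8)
The plan is to reduce the statement to Proposition \ref{unitCR}, which already tells us that $\tau$ is balanced if and only if $\left|\X(p_1,p_2,p_3,p_4)\right|=1$. It therefore suffices to evaluate the cross-ratio directly on the lifts recorded in \eqref{standard-points} and read off its modulus. A preliminary observation that makes this painless is that $\X$ does not depend on the choice of lifts: each of $\bp_1,\bp_2,\bp_3,\bp_4$ occurs exactly once in the numerator and once in the denominator of the defining expression $\X=\la\bp_3,\bp_1\ra\la\bp_4,\bp_2\ra/\la\bp_3,\bp_2\ra\la\bp_4,\bp_1\ra$, so rescaling any lift leaves the quotient unchanged. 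We are thus free to work with exactly the four column vectors of \eqref{standard-points}, without first renormalising so that $\la\bp_1,\bp_2\ra=-1$ as in the proof of Proposition \ref{unitCR}.

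First I would fix the Hermitian form compatible with these lifts. Since $\bp_1$ and $\bp_2$ are prescribed to be null, the relevant form is the anti-diagonal one $\la z,w\ra=z_1\overline{w}_3+z_2\overline{w}_2+z_3\overline{w}_1$. A quick check confirms that $\la\bp_3,\bp_3\ra$ and $\la\bp_4,\bp_4\ra$ also vanish, using $|(\bp_3)_2|^2=2\cos(2\theta)$ and $|(\bp_4)_2|^2=2r^2\cos(2\phi)$ together with the ranges $2\theta,2\phi\in[-\pi/2,\pi/2]$ that make the square roots real; so the data of \eqref{standard-points} genuinely describes an ideal tetrahedron.

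The computation of the four Hermitian products is then immediate, because $\bp_1$ and $\bp_2$ are coordinate vectors: pairing a vector against $\bp_1$ isolates its third coordinate, and pairing against $\bp_2$ isolates its first. Explicitly $\la\bp_3,\bp_1\ra=1$, $\la\bp_3,\bp_2\ra=-e^{2i\theta}$, $\la\bp_4,\bp_1\ra=1$ and $\la\bp_4,\bp_2\ra=-r^2e^{-2i\phi}$. Substituting into the cross-ratio gives
$$\X(p_1,p_2,p_3,p_4)=\frac{\la\bp_3,\bp_1\ra\la\bp_4,\bp_2\ra}{\la\bp_3,\bp_2\ra\la\bp_4,\bp_1\ra}=\frac{(1)(-r^2e^{-2i\phi})}{(-e^{2i\theta})(1)}=r^2e^{-2i(\theta+\phi)},$$
so that $\left|\X(p_1,p_2,p_3,p_4)\right|=r^2$. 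By Proposition \ref{unitCR} the tetrahedron is balanced precisely when $r^2=1$, and since $r>0$ this is equivalent to $r=1$, as claimed.

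There is no genuine obstacle here beyond careful bookkeeping. The only two points that require attention are fixing the sign convention of the Hermitian form so that all four listed vectors are null, and noting the lift-independence of $\X$ so that the un-normalised lifts of \eqref{standard-points} may be fed directly into the cross-ratio formula. Once these are in place the result falls out of a one-line evaluation.
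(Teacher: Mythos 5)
Your proof is correct and follows exactly the paper's route: the paper's own proof is the one-line computation $\X(p_1,p_2,p_3,p_4)=r^2e^{-2i\theta-2i\phi}$ from the lifts in \eqref{standard-points}, combined implicitly with Proposition \ref{unitCR}, which is precisely what you did. Your additional checks (the anti-diagonal Hermitian form, nullity of $\bp_3,\bp_4$, and lift-independence of $\X$) are details the paper leaves tacit, and they are all accurate.
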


\begin{proof}
Computing the cross ratio in this case, we obtain 
$\X(p_1,p_2,p_3,p_4)=r^2e^{-2i\theta-2i\phi}.$
\end{proof}

Note that this implies that an ideal tetrahedron $(p_1,p_2,p_3,p_4)$ is 
balanced if and only if the two points $p_3$ and $p_4$ both lie on the 
boundary of a bisector $\mathcal{B}$ whose complex spine is the complex 
line spanned by $p_1$ and $p_2$  and whose real spine is a geodesic 
orthogonal to $(p_1p_2)$ (see Chapter 5 of \cite{Goldbook} for definitions of 
these notions).

\begin{definition}\label{defitau}
We denote by $\tau(\theta,\phi,\psi)$ the tetrahedron given by
(\ref{standard-points}), where $r$ is replaced by $1$ in $p_4$.
\end{definition}

\begin{definition}
Let $(p_1,p_2,p_3,p_4)$ be an ideal tetrahedron
so that neither of the triples $(p_1,p_2,p_3)$ and
$(p_1,p_2,p_4)$ lie in a complex line . Denote by $c_{12}$ a polar
vector to the complex line spanned by $p_1$ and $p_2$. The following quantity 
is well-defined and is called the \textit{bending parameter}.
\begin{equation}\label{eq-bend-invt}
\mathbb{B}(p_1,p_2,p_3,p_4)
=\X(p_4,p_3,p_1,c_{12})\cdot\X(p_4,p_3,p_2,c_{12})
\end{equation}
\end{definition}

To check that $\mathbb{B}$ is well-defined, note fist that it does not 
depend on the choice of lifts for the $p_i$'s, nor on the choice of $c_{12}$. 
Secondly, the Hermitian products involving $c_{12}$ in the two 
cross-ratios are $\la c_{12}, p_3\ra$ and $\la c_{12}, p_4\ra$ which are 
non zero in view of the assumption made, therefore the two cross-ratios 
$ \X(p_4,p_3,p_1,c_{12})$ and $\X(p_4,p_3,p_2,c_{12})$ are well-defined. 

\medskip

\begin{example}
In the  normalised form given above by (\ref{standard-points}), we see that
$$
\mathbb{B}(p_1,p_2,p_3,p_4)=\dfrac{\cos(2\theta)}{\cos(2\phi)}e^{4i\psi}.
$$
\end{example}

\medskip

Assume that $\A(p_2,p_1,p_3)=\pm\pi/2$ or $\A(p_1,p_2,p_4)=\pm\pi/2$. This 
means that 
$p_3$ or $p_4$ respectively lies on the complex line through $p_1$ and $p_2$. 
Using the standard form (\ref{standard-points}) we see that the middle entry 
of $\bp_3$ or $\bp_4$ is zero. Therefore the angle $\psi$ is not well defined 
in that case.

The following proposition is a straightforward consequence of the
above normalisation.

\begin{proposition}\label{prop-tet-params}
  A balanced tetrahedron $(p_1,p_2,p_3,p_4)$ such that neither of the 
  triples $(p_1,p_2,p_3)$ and $(p_1,p_2,p_4)$ lie in a complex line is
  uniquely determined up to ${\rm PU}(2,1)$ by the three quantities 
$\A(p_1,p_2,p_3)$, $\A(p_1,p_2,p_4)$ and $\mathbb{B}(p_1,p_2,p_3,p_4)$.
\end{proposition}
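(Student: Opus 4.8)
The plan is to exploit the explicit normalisation (\ref{standard-points}) directly: the three given invariants recover the three parameters $\theta$, $\phi$, $\psi$ of the normal form, and those parameters in turn pin down the tetrahedron up to ${\rm PU}(2,1)$. By the discussion culminating in Definition \ref{defitau}, any balanced tetrahedron satisfying the hypotheses is ${\rm PU}(2,1)$-equivalent to one of the form $\tau(\theta,\phi,\psi)$, with $2\theta,2\phi\in(-\pi/2,\pi/2)$ and $\psi\in[0,\pi/2]$. The open condition $2\theta,2\phi\neq\pm\pi/2$ is exactly the assumption that neither $(p_1,p_2,p_3)$ nor $(p_1,p_2,p_4)$ lies in a complex line, and it guarantees $\cos(2\theta)>0$ and $\cos(2\phi)>0$, so that $\psi$ is well defined and $\mathbb{B}\neq0$. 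It therefore suffices to show that $(\theta,\phi,\psi)$ is determined by the three stated quantities: two tetrahedra with equal invariants then have identical normal forms and so are ${\rm PU}(2,1)$-equivalent.

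Next I would invert the three explicit formulae one parameter at a time. The Cartan invariant is antisymmetric under a transposition of its first two arguments, since $\la\bp_2,\bp_1\ra=\overline{\la\bp_1,\bp_2\ra}$ conjugates the defining triple product; hence $\A(p_1,p_2,p_3)=-\A(p_2,p_1,p_3)=-2\theta$, which determines $\theta$ in $(-\pi/4,\pi/4)$. Directly, $\A(p_1,p_2,p_4)=2\phi$ determines $\phi$ in the same range. Finally, using the value $\mathbb{B}=\dfrac{\cos(2\theta)}{\cos(2\phi)}e^{4i\psi}$ computed in the Example above, and with $\theta,\phi$ now known, the modulus $|\mathbb{B}|=\cos(2\theta)/\cos(2\phi)$ is forced (a consistency check rather than new information), while $\arg\mathbb{B}\equiv4\psi\ [2\pi]$ recovers $\psi$.

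The one genuine point to treat carefully — and the step I expect to be the main obstacle — is this recovery of $\psi$ from $\arg\mathbb{B}$. Since $4\psi$ ranges over $[0,2\pi]$ as $\psi$ ranges over $[0,\pi/2]$, the determination is unambiguous on the interior $\psi\in[0,\pi/2)$, but the two endpoints $\psi=0$ and $\psi=\pi/2$ both give $\arg\mathbb{B}\equiv0$, so I would have to check that these either yield the same tetrahedron up to ${\rm PU}(2,1)$ or otherwise cause no genuine collision. Apart from this boundary bookkeeping the argument is simply the inversion of the three displayed formulae, which is why the result is indeed a straightforward consequence of the normalisation.
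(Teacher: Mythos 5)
Your strategy is the same as the paper's (the paper gives no argument at all, simply declaring the proposition a straightforward consequence of the normalisation \eqref{standard-points}), and your recovery of $\theta$ and $\phi$ from the two Cartan invariants, via antisymmetry, is correct. But the ``boundary bookkeeping'' you flag and leave open is not a removable technicality: it is a genuine counterexample, so your proof cannot be completed. Concretely, take any $\theta=\phi\neq 0$ (so that $\theta\neq-\phi$ and both tetrahedra below are non-degenerate). Then $\tau(\theta,\phi,0)$ and $\tau(\theta,\phi,\pi/2)$ have identical invariants: the Cartan invariants agree, and $\mathbb{B}=\frac{\cos 2\theta}{\cos 2\phi}e^{4i\psi}$ takes the same value since $e^{4i\psi}=1$ for both $\psi=0$ and $\psi=\pi/2$. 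Yet they are \emph{not} ${\rm PU}(2,1)$-equivalent. Indeed, any holomorphic isometry fixing $p_1$ and $p_2$ lifts to a diagonal matrix in the coordinates of \eqref{standard-points} (the middle basis vector spans the orthogonal complement of the other two), and on lifts normalised to have third entry $1$ it acts by $(z_1,z_2,1)\mapsto(\lambda z_1,\mu z_2,1)$ with $\lambda>0$, $|\mu|=\sqrt{\lambda}$; in particular it rotates the arguments of the middle entries of $\bp_3$ and of $\bp_4$ by the \emph{same} angle $\arg\mu$. Carrying $\tau(\theta,\phi,0)$ to $\tau(\theta,\phi,\pi/2)$ would force $\arg\mu\equiv-\pi/2$ from $\bp_3$ and $\arg\mu\equiv+\pi/2$ from $\bp_4$, a contradiction. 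The paper itself corroborates this: its two modular-group families, $(\theta=\phi,\ \psi=0)$ and $(\theta=\phi,\ \psi=\pi/2)$, have the same three invariants (both have $\mathbb{B}=1$), but by \eqref{trJ1J2m} they yield $(3,3,\infty)$-groups with ${\rm tr}(J_1J_2^{-1})=4-4\cos 2\theta$ and $4+4\cos 2\theta$ respectively, so the underlying tetrahedra cannot be equivalent.

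The root of the problem is a claim you took over from the paper one step earlier: that every balanced tetrahedron satisfying the hypotheses is equivalent to some $\tau(\theta,\phi,\psi)$ with $\psi\in[0,\pi/2]$. Since the stabiliser of the ordered pair $(p_1,p_2)$ shifts both middle arguments by a common angle, the only residual invariant (once $\theta,\phi$ are fixed by the Cartan invariants) is the difference $(\theta-\psi)-(-\phi+\psi)=\theta+\phi-2\psi$ modulo $2\pi$; hence $\psi$ is a complete invariant modulo $\pi$, and the correct fundamental domain is $\psi\in[0,\pi)$, not $[0,\pi/2]$. Because $\arg\mathbb{B}=4\psi$ determines $\psi$ only modulo $\pi/2$, the three quantities in the statement pin down the tetrahedron exactly up to the two-fold ambiguity $\psi\leftrightarrow\psi+\pi/2$; so it is not merely your proof but the proposition as stated (for ordered tetrahedra, with this $\mathbb{B}$) that fails. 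The repair is to use an invariant that detects $e^{2i\psi}$ rather than $e^{4i\psi}$: for example the single factor $\X(p_4,p_3,p_1,c_{12})$, which for a balanced tetrahedron in normal form equals $\sqrt{\cos 2\theta/\cos 2\phi}\;e^{-i\theta-i\phi+2i\psi}$ and hence, together with the two Cartan invariants, does determine the tetrahedron up to ${\rm PU}(2,1)$; equivalently one may record the pair $(\cos 2\psi,\sin 2\psi)$, which is what the paper implicitly does later with its coordinates \eqref{eq-new-params} in Section \ref{section-params}.
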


\section{Constructing thrice punctured sphere groups from 
tetrahedra}

We wish now to work in the converse direction: given a balanced ideal
tetrahedron $(p_1,p_2,p_3,p_4)$, is it possible to construct a
parabolic representation $\rho : F_2\longrightarrow {\rm PU}(2,1)$, such
that $\rho(a)$ fixes $p_1$, $\rho(b)$ fixes $p_2$, $\rho(ab)$ fixes
$p_3$ and $\rho(ba)$ fixes $p_4$.

\subsection{Mappings of boundary points}

In this section we will consider configurations of distinct 
points on $\partial\HdC$, and use them to construct maps in 
${\rm Isom}(\HdC)$ with certain properties. Consider a matrix
$A$ in ${\rm SU}(2,1)$. We know that the eigenvectors of $A$
in $V_-$ and $V_0$ correspond to fixed points of $A$ in $\HdC$
and $\partial\HdC$ respectively. We say that $p\in\partial\HdC$
is a {\sl neutral fixed point} of $A$ if the corresponding
eigenvector ${\bf p}$ has an eigenvalue $\lambda$ with $|\lambda|=1$.
Note that a matrix $A$ with a neutral fixed point in $\partial\HdC$
must be either parabolic or a complex reflection.

In particular, we consider triples of points $p$, $q$ and $r$ of 
$\partial\HdC$. Our goal will be to show that there is a
unique holomorphic isometry of $\HdC$ which sends $q$ to $r$
and with $p$ as a neutral fixed point with a prescribed eigenvalue.
Moreover, we will show how to determine when such an isometry is 
parabolic and when it is a complex reflection.

\begin{proposition}\label{u-isom}
Let $p$, $q$, $r$ be distinct points of $\partial\HdC$ and let
$\lambda$ be a complex number of unit modulus. Then there exists
a unique holomorphic isometry $A$ sending $q$ to $r$ and for which
$p$ is a neutral fixed point with associated eigenvalue $\lambda$.
Moreover, 
\begin{enumerate}
\item If $\lambda^3=-e^{2i\A(p,q,r)}$ and $p$, $q$ and $r$
do not lie in a complex line, then $A$ is elliptic.
\item Otherwise $A$ is parabolic.
\end{enumerate}
\end{proposition}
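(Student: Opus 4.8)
The plan is to prove existence and uniqueness by writing $A$ explicitly in a basis adapted to $p$ and $q$, and then to read off the type of $A$ directly from the resulting matrix. First I would fix null lifts $\bp$, $\bq$, $\br$ of $p$, $q$, $r$; since the three points are distinct and $\HdC$ has Witt index one, the Hermitian products $\la\bp,\bq\ra$, $\la\bq,\br\ra$, $\la\br,\bp\ra$ are all nonzero. Because $\mathrm{span}(\bp,\bq)$ carries a form of signature $(1,1)$, I can complete $\{\bp,\bq\}$ to a basis of $\Cdu$ by a vector $\bn$ with $\la\bn,\bp\ra=\la\bn,\bq\ra=0$ and $\la\bn,\bn\ra>0$. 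Writing $\br=a\bp+b\bq+c\bn$, the coefficients are $a=\la\br,\bq\ra/\la\bp,\bq\ra$, $b=\la\br,\bp\ra/\la\bq,\bp\ra$ and $c=\la\br,\bn\ra/\la\bn,\bn\ra$, while the null condition $\la\br,\br\ra=0$ reads $|c|^{2}\la\bn,\bn\ra=-2\Re(a\overline b\la\bp,\bq\ra)$.

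In this basis the requirements $A\bp=\lambda\bp$ and $Aq=r$ mean $A\bq=\mu\br$ for some scalar $\mu$, and invariance of $\bp^{\perp}=\mathrm{span}(\bp,\bn)$ forces $A\bn=\alpha\bp+\gamma\bn$ (no $\bq$–component). Thus $A$ has an explicit $3\times 3$ matrix with only the unknowns $\mu,\alpha,\gamma$. Imposing that $A$ preserve the Hermitian form on the pairs from $\{\bp,\bq,\bn\}$ pins down $\mu$ (from $\la A\bp,A\bq\ra=\la\bp,\bq\ra$, which gives $\mu b=\lambda$), produces one linear relation between $\alpha$ and $\gamma$, and forces $|\gamma|=1$; the determinant condition $\det A=1$ then yields $\gamma=\lambda^{-2}$ and hence $\alpha=-\lambda^{-2}\la\bn,\br\ra/\la\bp,\br\ra$. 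Conversely this matrix does preserve the form and has determinant one, so $A$ exists and, once the lift is normalised by the prescribed eigenvalue $\lambda$ at $p$, is unique. A quick computation of the characteristic polynomial of this block–triangular matrix shows that its eigenvalues are always $\lambda,\lambda,\lambda^{-2}$, so $\tr A=2\lambda+\lambda^{-2}$ lies on the deltoid and Proposition \ref{prop-class-trace} guarantees that $A$ is either parabolic or a complex reflection. The whole classification therefore reduces to deciding when $A$ is semisimple.

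Since $\lambda$ is the repeated eigenvalue, $A$ is a complex reflection exactly when its $\lambda$–eigenspace is two dimensional, i.e. when the $2\times2$ block of $A-\lambda I$ acting on the $\bq,\bn$ coordinates is singular. This is a single scalar equation, which after substituting the values of $a,c,\alpha,\mu$ reads $a(\lambda^{-2}-\lambda)=\alpha c$. The heart of the proof is to rewrite this invariantly. Using the relation $c\la\bn,\bn\ra=\la\br,\bn\ra$ together with the null condition $|c|^{2}\la\bn,\bn\ra=-2\Re T/|\la\bp,\bq\ra|^{2}$, where $T=\la\br,\bp\ra\la\bp,\bq\ra\la\bq,\br\ra$ is the Hermitian triple product with $\A(p,q,r)=\arg(-T)$, the equation collapses after clearing denominators to $-\lambda^{3}\overline T=T$, that is $\lambda^{3}=-T/\overline T=-e^{2i\A(p,q,r)}$. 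This is precisely the condition in case (1); moreover, when $p,q,r$ are not complex–collinear we have $\A(p,q,r)\neq\pm\pi/2$, so $-e^{2i\A(p,q,r)}\neq1$, whence $\lambda^{3}\neq1$ and $\lambda^{-2}\neq\lambda$. The algebraic multiplicity of $\lambda$ is then exactly two, a two dimensional eigenspace forces semisimplicity, and $A$ is a complex reflection, which is elliptic.

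Finally I would treat the degenerate configurations separately, which is exactly why the non–collinearity hypothesis appears. If $p,q,r$ lie in a complex line then $\la\br,\bn\ra=0$, so $c=0$ and $\alpha=0$, yet the super–diagonal entry $\mu a\neq0$ survives; hence $A$ always carries a nontrivial Jordan block and is parabolic, whatever $\lambda$ is (the only semisimple alternative, $A=\lambda I$, is excluded because $A$ moves $q$ to $r\neq q$). In the non–collinear case, if $\lambda^{3}\neq-e^{2i\A(p,q,r)}$ then the $\lambda$–eigenspace is one dimensional and $A$ is again parabolic. Together these give the stated dichotomy. The step I expect to be the real obstacle is the invariant rewriting of the third paragraph: keeping the antilinearity of $\la\cdot,\cdot\ra$ straight through the substitutions and matching the sign and argument conventions so that $-T/\overline T$ becomes exactly $-e^{2i\A(p,q,r)}$ and not its conjugate or negative.
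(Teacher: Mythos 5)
Your proof is correct, and while it shares the paper's overall strategy---write $A$ explicitly in a basis adapted to the points and read off its type from the rank of $A-\lambda I$---it takes a genuinely different route through the linear algebra. The paper splits into two cases with two different bases: for $p$, $q$, $r$ not in a complex line it uses the basis $(\bp,\bq,\br)$ of lifts, where the ellipticity criterion $\lambda^3=-e^{2i\A(p,q,r)}$ appears immediately as the vanishing of the top-right entry of $M_1-\lambda I$ (the Hermitian triple product sits in that entry); in the collinear case it switches to a basis $(\bp,\bn,\bq)$ containing the polar vector. You work throughout in the single basis $(\bp,\bq,\bn)$ with $\bn$ polar to the complex line through $p$ and $q$, so both cases are handled by one matrix (collinearity of $r$ is just $c=0$). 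This buys you two things the paper gets separately: uniqueness is automatic, since form-preservation and $\det A=1$ pin down $\mu$, $\alpha$ and $\gamma$ (the paper runs a separate composition argument showing $f_1\circ f_2^{-1}$ is the identity), and membership in ${\rm SU}(2,1)$ is built into the construction rather than checked after the fact. The price is the step you yourself flagged as the crux: because $\br$ is not a basis vector, the rank-one condition $a(\lambda^{-2}-\lambda)=\alpha c$ has to be converted into an invariant statement using the null condition $\la\br,\br\ra=0$, and your computation $-\lambda^3\overline{T}=T$, hence $\lambda^3=-T/\overline{T}=-e^{2i\A(p,q,r)}$ with $\A(p,q,r)=\arg(-T)$, does come out right with the paper's sign conventions. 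You also treat correctly the degenerate possibility $\lambda^3=1$, where a two-dimensional $\lambda$-eigenspace would not imply semisimplicity: in case (1) you rule it out via non-collinearity, and in the remaining cases you only ever conclude failure of semisimplicity, which is all that parabolicity requires.
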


\begin{proof}
  First, such an isometry is unique if it exists. Indeed, if there
  were two such isometries, say $f_1$ and $f_2$, then $f_1\circ
  f_2^{-1}$ would fix both $p$ and $r$. Moreover the eigenvalue of
  $f_1\circ f_2^{-1}$ associated with $p$ would be $1$ (or a cube root
  of $1$). Thus $f_1\circ f_2^{-1}$ would be the identity.

  To prove existence, let us fix lifts $(\bp,\bq,\br)$ for the three
  points $(p,q,r)$. 

\begin{itemize}
\item Assume first that $(\bp,\bq,\br)$ is a basis, that is $(p,q,r)$
  do not lie on a common complex line.  The following matrix, written
  in the basis $(\bp,\bq,\br)$ has eigenvalue $\lambda$ associated to $\bp$
  , preserves the Hermitian product and projectively maps $q$ to $r$.
\begin{equation}\label{casebasis}
M_1=\begin{bmatrix}
\lambda & 0 & 
\lambda\dfrac{\la\br,\bq\ra}{\la\bp,\bq\ra}
+\overline{\lambda}^2\dfrac{\la\br,\bp\ra\la\bq,\br\ra}
{\la\bp,\br\ra\la\bq,\bp\ra}\\
0 & 0 & -\overline{\lambda}^2\dfrac{\la\br,\bp\ra}{\la\bq,\bp\ra}\\
0 & \lambda\dfrac{\la\bq,\bp\ra}{\la\br,\bp\ra} & \lambda+\overline{\lambda}^2
\end{bmatrix}.
\end{equation}
It is not hard to check that $M_1$ preserves the Hermitian form.
Furthermore, this isometry is elliptic if and only if the matrix 
$M_1-\lambda\cdot I$ has rank one.  Now,
\begin{equation}
M_1-\lambda\cdot I=\begin{bmatrix}
0 & 0 & \lambda\dfrac{\la\br,\bq\ra}{\la\bp,\bq\ra}
+\overline{\lambda}^2\dfrac{\la\br,\bp\ra\la\bq,\br\ra}
{\la\bp,\br\ra\la\bq,\bp\ra}\\
0 & -\lambda & -\overline{\lambda}^2\dfrac{\la\br,\bp\ra}{\la\bq,\bp\ra}\\
0 & \lambda\dfrac{\la\bq,\bp\ra}{\la\br,\bp\ra} & \overline{\lambda}^2
.\end{bmatrix}
\end{equation}
Since the bottom right $2\times 2$ minor of $M_1-\lambda\cdot I$ vanishes, we
see that $M_1$ is elliptic if and only if the top right
entry of $M_1-\lambda\cdot I$ vanishes, which gives after a little rewriting
$$
\lambda^3=-\dfrac{\la\bp,\bq\ra\la\bq,\br\ra\la\br,\bp\ra}
{\la\bp,\br\ra\la\br,\bq\ra\la\bq,\br\ra}
=-e^{2i\A(p,q,r)}.
$$
\item If $(\bp,\bq,\br)$ is not a basis of $\C^3$, that is if
  $(p,q,r)$ lie on a complex line $L$, then any isometry fixing $p$
  and mapping $q$ to $r$ preserves $L$. If $\bn$ is polar to $L$, then
  $(\bp,\bn,\bq)$ is a basis of $\C^3$.  In this basis, the vector
  $\br$ is given by
$$
\br=\dfrac{\la\br,\bq\ra}{\la\bp,\bq\ra}\bp
+\dfrac{\la\br,\bp\ra}{\la\bq,\bp\ra}\bq
$$
The matrix $M_2$ given in (\ref{casenotbasis}) represents a
holomorphic isometry mapping $q$ to $r$ and with $p$ a neutral fixed point:
\begin{equation}\label{casenotbasis}
M_2=\begin{bmatrix}
\lambda & 0 & 
\lambda\dfrac{\la\br,\bq\ra\la\bq,\bp\ra}{\la\br,\bp\ra\la\bp,\bq\ra}\\
0 & 1/\lambda^{2} & 0\\
0 & 0 & \lambda
\end{bmatrix}
\end{equation}
Because the three points $p$, $q$, $r$ are distinct, the top-right coefficient
is never zero thus $M_2-\lambda\cdot I$ always has rank 2. Hence
$M_2$ represents a parabolic isometry.
\end{itemize}
\end{proof}

\medskip

As a direct application of Proposition \ref{u-isom}, we can associate
parabolic (or boundary elliptic) representations to balanced ideal tetrahedra.

\begin{proposition}\label{repEP}
Let $(p_1,p_2,p_3,p_4)$ be a balanced ideal tetrahedron and let
$\lambda_A$ and $\lambda_B$ be two complex numbers of modulus 1. 
There exists a unique representation 
$\rho:F_2  \longrightarrow {\rm PU}(2,1)$ such that 
\begin{itemize}
\item $A=\rho(a)$ fixes $p_1$ with eigenvalue $\lambda_A$ and $B=\rho(b)$ fixes
  $p_2$ with eigenvalue $\lambda_B$. 
\item $AB=\rho(ab)$ and $BA=\rho(ba)$ are parabolic or boundary elliptic and 
fix respectively $p_3$ and $p_4$.
\end{itemize}
\end{proposition}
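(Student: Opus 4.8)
The plan is to build $A=\rho(a)$ and $B=\rho(b)$ separately using Proposition \ref{u-isom}, exploiting that $F_2=\langle a,b\rangle$ is free: once the two isometries are fixed, setting $\rho(a)=A$, $\rho(b)=B$ automatically defines a representation, and the uniqueness of the two isometries will give the uniqueness of $\rho$. The correct auxiliary data is read off from the proof of Lemma \ref{eigencross}, where one uses $Ap_{BA}=p_{AB}$ and $Bp_{AB}=p_{BA}$; in the present notation this says $Ap_4=p_3$ and $Bp_3=p_4$. So I would apply Proposition \ref{u-isom} once to the triple $(p,q,r)=(p_1,p_4,p_3)$ with eigenvalue $\lambda_A$, obtaining the unique holomorphic isometry $A$ for which $p_1$ is a neutral fixed point with eigenvalue $\lambda_A$ and $Ap_4=p_3$, and once to $(p_2,p_3,p_4)$ with eigenvalue $\lambda_B$, obtaining $B$. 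Since the vertices of a non-degenerate ideal tetrahedron are pairwise distinct, the distinctness hypothesis of Proposition \ref{u-isom} is satisfied in both applications.

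Granting this, existence is almost immediate. One checks $AB(p_3)=A(Bp_3)=A(p_4)=p_3$ and $BA(p_4)=B(Ap_4)=B(p_3)=p_4$, so $AB$ and $BA$ fix $p_3$ and $p_4$ respectively. It then remains to verify that they are parabolic or boundary elliptic, that is, that the eigenvalue $\lambda_{AB}$ of $AB$ at $p_3$ is unimodular; for then $AB$ has a neutral fixed point and is parabolic or a complex reflection, while $BA=A^{-1}(AB)A$ is conjugate to $AB$ and hence of the same type. This is exactly the point at which the balanced hypothesis is used. Because $Ap_4=p_3$, Lemma \ref{eigencross}(2) applies verbatim and yields $\X(p_1,p_2,p_3,p_4)=1/(\overline{\lambda}_A\overline{\lambda}_B\lambda_{AB})$, whereas Proposition \ref{unitCR} tells us that a balanced tetrahedron satisfies $|\X(p_1,p_2,p_3,p_4)|=1$. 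Since $|\lambda_A|=|\lambda_B|=1$, these two facts force $|\lambda_{AB}|=1$, which is what we want.

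For uniqueness, the representation is determined by $A$ and $B$, so by the uniqueness clause of Proposition \ref{u-isom} it is enough to show that any $\rho$ meeting the hypotheses must have $A$ send $p_4$ to $p_3$ and $B$ send $p_3$ to $p_4$. Since $AB=A(BA)A^{-1}$, the isometry $A$ carries the fixed-point set of $BA$ to that of $AB$; in particular $Ap_4$ is a boundary fixed point of $AB$. When $AB$ is parabolic it has a single boundary fixed point, namely $p_3$, so $Ap_4=p_3$, and symmetrically $Bp_3=p_4$, giving uniqueness at once.

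The step I expect to be the main obstacle is precisely this uniqueness when $AB$ (equivalently $BA$) is a complex reflection. In that case $AB$ fixes a whole circle on $\partial\HdC$ through $p_3$, so the information that $Ap_4$ is a boundary fixed point of $AB$ no longer forces $Ap_4=p_3$; indeed a priori a one-parameter family of choices of $A$ could leave the fixed line of $AB$, and hence $p_3$, fixed. To close this case I would drop back to lifts, writing $A\bp_4=\mu\,\bp_3'$ and $B\bp_3=\nu\,\bp_4$ where $p_3'=Ap_4$ is the a priori unknown image, keep track of the relation $\lambda_{AB}=\mu\nu$ together with the constraints that $B$ fixes $p_2$ with eigenvalue $\lambda_B$ and that $p_3$ itself is fixed, and use Hermitian orthogonality to show $p_3'=p_3$. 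Turning this bookkeeping into a rigorous argument, rather than the construction of $\rho$ itself, is the delicate part of the proof.
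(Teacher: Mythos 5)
Your existence argument is exactly the paper's proof: apply Proposition \ref{u-isom} once to $(p_1,p_4,p_3)$ with eigenvalue $\lambda_A$ and once to $(p_2,p_3,p_4)$ with eigenvalue $\lambda_B$, observe that $AB$ fixes $p_3$ and $BA$ fixes $p_4$, and then use Lemma \ref{eigencross} together with $\bigl|\X(p_1,p_2,p_3,p_4)\bigr|=1$ (Proposition \ref{unitCR}) to conclude that the common eigenvalue of $AB$ at $p_3$ and $BA$ at $p_4$ is unimodular, so these maps are parabolic or boundary elliptic. On uniqueness you are in fact more careful than the paper. The paper's proof stops with the construction: it invokes the uniqueness clause of Proposition \ref{u-isom} only for the pair it has just built (i.e.\ uniqueness among pairs satisfying $Ap_4=p_3$ and $Bp_3=p_4$), and never shows that an arbitrary representation satisfying the two bullet points must send $p_4$ to $p_3$ and $p_3$ to $p_4$. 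Your argument in the parabolic case --- $AB=A(BA)A^{-1}$ carries the unique boundary fixed point $p_4$ of $BA$ to that of $AB$, forcing $Ap_4=p_3$ --- is precisely the step the paper omits, and your worry about the complex-reflection case is genuine: there the boundary fixed-point set of $AB$ is a whole circle, $Ap_4$ need not a priori equal $p_3$, and neither your sketch with lifts nor anything in the paper closes this case. Indeed, the Remark immediately following the proposition in effect concedes the difficulty: when one of $\rho(a)$, $\rho(b)$, $\rho(c)$ is a complex reflection, fixed points are no longer unique and the correspondence between tetrahedra and representations degenerates. So you have not missed a hidden argument; the residual case you flag is a gap in the paper's own treatment, and a fully airtight statement would either restrict to representations for which $AB$ and $BA$ are parabolic, or incorporate the conditions $Ap_4=p_3$, $Bp_3=p_4$ into the bullet points, in which case your proof (and the paper's) is complete.
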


\begin{proof}
  Define $A=\rho (a)$ and $B=\rho(b)$ using Proposition \ref{u-isom}: $A$ is
  the unique isometry with fixing $p_1$ with eigenvalue $\lambda_A$ and
  mapping $p_4$ to $p_3$, and $B$ is the unique isometry fixing
  $p_2$ with eigenvalue $\lambda_B$ and mapping $p_3$ to $p_4$. From this
  definition, we see that $AB$ fixes $p_3$ and $BA$ fixes $p_4$. 
It remains to check that the eigenvalue $\lambda_3$ of $AB$ associated to $p_3$ 
(which is the same as the eigenvalue $\lambda_4$ of $BA$ associated to $p_4$)
has unit modulus. From Lemma \ref{eigencross} we have
$$
\lambda_3 = \frac{\lambda_A\lambda_B}{\X(p_1,p_2,p_3,p_4)}.
$$
Since the tetrahedron is balanced, we have 
$\bigl|\X(p_1,p_2,p_3,p_4)\bigr|=1$ and the result follows.
\end{proof}

\begin{remark}
  The function mapping $(\tau,\lambda_A,\lambda_B)$ to the representation
  $\rho$ given by Proposition \ref{repEP} is not a bijection. Indeed
  in the case where one of $\rho(a)$, $\rho(b)$ or $\rho(c)$ is a
  complex reflections it does not have a unique fixed point, and so 
different ideal tetrahedra can give the same representation. 
\end{remark}

\subsection{A specific normalisation.}
We now give the parabolic representation of $F_2$ in ${\rm PU}(2,1)$ 
corresponding the balanced 
tetrahedron $\tau(\theta,\phi,\psi)$ given in Definition \ref{defitau}. 
This means that
\begin{equation}\label{balanced}
\bp_A=\begin{bmatrix}1\\0\\0\end{bmatrix}, \quad 
\bp_B=\begin{bmatrix}0\\0\\1\end{bmatrix}, \quad 
\bp_{AB}=\begin{bmatrix}
-e^{2i\theta} \\ \sqrt{2\cos(2\theta)}e^{i\theta-i\psi} \\ 1 \end{bmatrix}, \quad
\bp_{BA}=\begin{bmatrix}
-e^{-2i\phi} \\ \sqrt{2\cos(2\phi)}e^{-i\phi+i\psi} \\ 1 \end{bmatrix}
\end{equation}
where  
\begin{eqnarray*}
2\theta & = &\A(p_B,p_A,p_{AB})\in [-\pi/2,\pi/2]\\
2\phi & = &\A(p_A,p_B,p_{BA})\in [-\pi/2,\pi/2]\\
4\psi & = &\arg\left(\mathbb{B}(p_A,p_B,p_{AB},p_{BA})\right)\in[0,2\pi).
\end{eqnarray*}
%Note moreover that $\X(p_A,p_B,p_{AB},p_{BA})=e^{-2i(\theta+\phi)}$
%Suppose that $A$ and $B$ are the generators of a parabolic
%representation given given in Proposition \ref{repEP}. 
\noindent Writing $c_1=\sqrt{2\cos(2\theta)}$ and $c_2=\sqrt{2\cos(2\phi)}$, 
the matrices  $A$ and $B$ in ${\rm SU}(2,1)$ giving the parabolic 
representation are
\begin{eqnarray}
A & = & \begin{bmatrix}
\lambda_A 
& -\overline{\lambda}_A^2c_1\,e^{-i\theta+i\psi}+\lambda_Ac_2\,e^{i\phi-i\psi}
& -\lambda_Ae^{2i\theta}-\lambda_Ae^{2i\phi}+\overline{\lambda}_A^2
c_1c_2\,e^{-i\theta-i\phi+2i\psi} \\
0 & \overline{\lambda}_A^2 
& \lambda_Ac_1\,e^{i\theta-i\psi}-\overline{\lambda}_A^2c_2\,e^{-i\phi+i\psi} \\
0 & 0 & \lambda_A
\end{bmatrix}, \label{eq-A} \\
B & = & \begin{bmatrix}
\lambda_B & 0 & 0 \\
\overline{\lambda}_B^2c_1\,e^{-i\theta-i\psi}-\lambda_Bc_2\,e^{i\phi+i\psi} 
& \overline{\lambda}_B^2 & 0 \\
-\lambda_Be^{2i\theta}-\lambda_Be^{2i\phi}
+\overline{\lambda}_B^2c_1c_2\, e^{-i\theta-i\phi-2i\psi}
& -\lambda_Bc_1\,e^{i\theta+i\psi}+\overline{\lambda}_B^2c_2\,e^{-i\phi-i\psi}
& \lambda_B \end{bmatrix}. \label{eq-B}
\end{eqnarray}

\section{Thrice punctured sphere groups with a three-fold symmetry.}

In this section we restrict our attention to the case where there is a 
three-fold symmetry of the parabolic representation 
$\rho(F_2)=\langle A,B\rangle$. 
Consider the eigenvalues $\lambda_A$, $\lambda_B$ and $\lambda_C$
of $A$, $B$ and $C=B^{-1}A^{-1}$ at $p_A$, $p_B$ and $p_{AB}$.
Specifically, we show that if these are equal then $\langle A,B\rangle$
is an index 3 subgroup of a $(3,3,\infty)$ group $\langle J_1,J_2\rangle$.
Moreover, this can be interpreted geometrically, for there is
a bijection between $(3,3,\infty)$ groups and balanced ideal tetrahedra.

We go on to give conditions under which further elements of this
group are pinched, that is they have become parabolic. In doing so,
we rule out the case where they are complex reflections. Therefore
pinching a single element is equivalent to satisfying a single real
algebraic equation (Proposition \ref{prop-class-trace}) this defines
a real hypersurface. Our main result is that for the $(3,3,\infty)$
group it is possible to simultaneously pinch $J_1J_2^{-1}$ and
$[J_1,J_2]$. Indeed there is a 1 parameter way of doing this.
This means that for the thrice punctured sphere group, it is
possible to pinch four conjugacy classes in addition to the
three boundary curves. 

This is in strong contrast to the classical case.
Every thrice punctured sphere groups in 
${\rm SL}(2,{\mathbb R})$ or ${\rm SL}(2,{\mathbb C})$ admits a
three-fold symmetry, that is, it is an index three subgroup of a
$(3,3,\infty)$ group. However, it is not possible to make any more 
elements of this group parabolic.

\subsection{Existence of a three-fold symmetry.}
\begin{definition}\label{def-J1-J2}
Consider a balanced tetrahedron with vertices $p_A$, $p_B$, $p_{AB}$
and $p_{BA}$, all lying in $\partial{\bf H}^2_\C$. We define the
following elements of ${\rm PU}(2,1)$ (see Figure \ref{order3}):
\begin{itemize}
\item $J_1$ is the order 3 isometry cyclically permuting 
$p_B$, $p_A$ and $p_{AB}$.
\item $J_2$ is the order 3 isometry cyclically permuting 
$p_A$, $p_B$ and $p_{BA}$. 
\end{itemize}
When these triples of points do not lie in a complex line, such an isometry 
is unique. 
\begin{figure}
\begin{center}
 \includegraphics[scale=0.6]{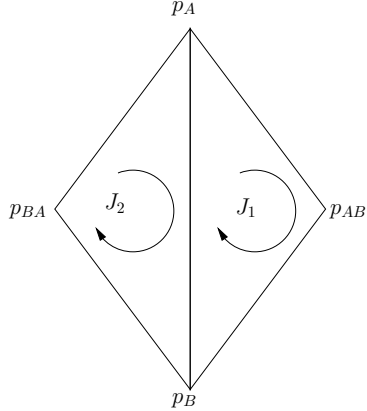}
\end{center}
\caption{Action of $J_1$ and $J_2$ on the fixed points of $A$, $B$, $AB$ and 
$BA$.\label{order3}}
\end{figure}
\end{definition}

Using the lifts of the vertices given in \eqref{balanced}, 
as matrices in ${\rm SU}(2,1)$ the maps $J_1$ and $J_2$ from
Definition \ref{def-J1-J2} are given by
\begin{eqnarray}
J_1 & = & \left[\begin{matrix}
e^{4i\theta/3} & \sqrt{2\cos(2\theta)}e^{i\theta/3+i\psi} & -e^{-2i\theta/3} \\
-\sqrt{2\cos(2\theta)}e^{i\theta/3-i\psi} & -e^{4i\theta/3} & 0 \\
-e^{-2i\theta/3} & 0 & 0 \end{matrix}\right], \label{eq-J1} \\
J_2 & = & \left[\begin{matrix}
0 & 0 & -e^{-2i\phi/3} \\
0 & -e^{4i\phi/3} & \sqrt{2\cos(2\phi)}e^{i\phi/3+i\psi} \\
-e^{-2i\phi/3} & -\sqrt{2\cos(2\phi)}e^{i\phi/3-i\psi} & e^{4i\phi/3}
\end{matrix}\right]. \label{eq-J2}
\end{eqnarray}
The ambiguity in the lift from ${\rm PU}(2,1)$ to ${\rm SU}(2,1)$
is precisely the same as the choice of cube root of $e^{i\theta}$ and
$e^{i\phi}$. Then we immediately have
\begin{eqnarray}
J_1^{-1} & = & \left[\begin{matrix}
0 & 0 & -e^{2i\theta/3} \\
0 & -e^{-4i\theta/3} & \sqrt{2\cos(2\theta)}e^{-i\theta/3-i\psi} \\
-e^{2i\theta/3} & -\sqrt{2\cos(2\theta)}e^{-i\theta/3+i\psi} & e^{-4i\theta/3}
\end{matrix}\right], \label{eq-J1i} %%%\\
% % % J_2^{-1} & = & \left[\begin{matrix}
% % % e^{-4i\phi/3} & \sqrt{2\cos(2\phi)}e^{-i\phi/3-i\psi} & -e^{2i\phi/3} \\
% % % -\sqrt{2\cos(2\phi)}e^{-i\phi/3+i\psi} & -e^{-4i\phi/3} & 0 \\
% % % -e^{2i\phi/3} & 0 & 0 \end{matrix}\right]. \label{eq-J2i}
\end{eqnarray}

\begin{theorem}\label{thm-symmetric-groups}
Let $\rho : F_2\longrightarrow{\rm PU}(2,1)$ be a representation so that
$A=\rho(a)$, $B=\rho(b)$ and $AB=\rho(c^{-1})$ are all parabolic
and let $p_A$, $p_B$ and $p_{AB}$ be their fixed points. 
Let $J_1$ be the order three map cyclically permuting $p_B$, $p_A$ and $p_{AB}$.
Let $p_{BA}$ be the fixed point of $BA$ and let $J_2$ be the order three 
map cyclically permuting $p_A$, $p_B$ and $p_{BA}$. 
Then the following are equivalent:
\begin{itemize}
\item[(i)] $A=J_1J_2$ and $B=J_2J_1$.
\item[(ii)] $\lambda_A$ and $\lambda_B$ are equal to the same cube root 
of the cross ratio $\X(p_A,p_B,p_{AB},p_{BA})$.
\end{itemize}
\end{theorem}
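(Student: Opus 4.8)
The plan is to reduce both matrix identities in (i) to a single eigenvalue computation by invoking the uniqueness in Proposition \ref{u-isom}. Recall that $A$ fixes $p_A$ as a neutral fixed point with eigenvalue $\lambda_A$ and satisfies $A(p_{BA})=p_{AB}$, so Proposition \ref{u-isom} characterises $A$ as the \emph{unique} holomorphic isometry with these properties; symmetrically, $B$ is the unique isometry fixing $p_B$ with eigenvalue $\lambda_B$ and sending $p_{AB}$ to $p_{BA}$. Hence, to compare $A$ with $J_1J_2$ and $B$ with $J_2J_1$, I only need to check that these products share the relevant boundary behaviour and then compute their eigenvalues at $p_A$ and $p_B$.

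First I would record the boundary combinatorics coming from Definition \ref{def-J1-J2}. Since $J_1$ realises the cycle $p_B\to p_A\to p_{AB}\to p_B$ and $J_2$ the cycle $p_A\to p_B\to p_{BA}\to p_A$, composing gives $J_1J_2(p_A)=J_1(p_B)=p_A$ and $J_1J_2(p_{BA})=J_1(p_A)=p_{AB}$, while $J_2J_1(p_B)=p_B$ and $J_2J_1(p_{AB})=p_{BA}$. Thus $J_1J_2$ fixes $p_A$ and sends $p_{BA}$ to $p_{AB}$, and $J_2J_1$ fixes $p_B$ and sends $p_{AB}$ to $p_{BA}$, exactly mirroring $A$ and $B$ on $\partial\HdC$.

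The one genuine computation is the eigenvalue. Using the explicit lifts \eqref{eq-J1} and \eqref{eq-J2} with the normalised vertices \eqref{balanced}, I would read off $J_2\bp_A=-e^{-2i\phi/3}\bp_B$ and $J_1\bp_B=-e^{-2i\theta/3}\bp_A$, so that
\begin{equation*}
J_1J_2\,\bp_A=e^{-2i(\theta+\phi)/3}\,\bp_A,\qquad J_2J_1\,\bp_B=e^{-2i(\theta+\phi)/3}\,\bp_B.
\end{equation*}
In particular $p_A$ and $p_B$ are neutral fixed points, so Proposition \ref{u-isom} genuinely applies, and the two eigenvalues coincide. Since the normalisation gives $\X(p_A,p_B,p_{AB},p_{BA})=e^{-2i(\theta+\phi)}$ (with $r=1$ because $\tau$ is balanced), this common value $e^{-2i(\theta+\phi)/3}$ is precisely a cube root of the cross ratio.

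Finally I would assemble the equivalence. Read in ${\rm SU}(2,1)$, condition (i) asks for a single choice of lifts of the pair $J_1,J_2$ for which $A=J_1J_2$ and $B=J_2J_1$ as matrices; by the uniqueness above, matching the eigenvalue at $p_A$ (resp.\ $p_B$) with the boundary data already forces matrix equality, so (i) holds if and only if $\lambda_A$ and $\lambda_B$ both equal the eigenvalue $e^{-2i(\theta+\phi)/3}$ of the chosen lift. The delicate point, which I expect to be the main obstacle, is the cube-root-of-unity lift ambiguity between ${\rm SU}(2,1)$ and ${\rm PU}(2,1)$: replacing the lifts of $J_1$ and $J_2$ multiplies \emph{both} products $J_1J_2$ and $J_2J_1$ by one and the same cube root of unity, so that the single available scalar multiplies the two eigenvalues $\lambda_A,\lambda_B$ together. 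It is exactly this shared freedom that converts the specific value $e^{-2i(\theta+\phi)/3}$ into condition (ii), namely that $\lambda_A$ and $\lambda_B$ are equal to one and the same (otherwise arbitrary) cube root of $\X(p_A,p_B,p_{AB},p_{BA})$; carefully tracking this coupling, rather than the boundary bookkeeping or the eigenvalue computation, is where the argument needs the most care.
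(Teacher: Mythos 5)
Your proposal is correct, and its backbone --- reducing both equalities in (i) to matching neutral fixed point data and invoking the uniqueness statement of Proposition \ref{u-isom} --- is exactly the closing step of the paper's own proof. Where you genuinely differ is in how the eigenvalues of $J_1J_2$ and $J_2J_1$ get identified as a common cube root of the cross-ratio. You compute them explicitly from the normalised matrices \eqref{eq-J1} and \eqref{eq-J2}, obtaining $J_1J_2\,\bp_A=e^{-2i(\theta+\phi)/3}\bp_A$ and $J_2J_1\,\bp_B=e^{-2i(\theta+\phi)/3}\bp_B$ (your signs and column readings are right), and compare with $\X=e^{-2i\theta-2i\phi}$ from the normalisation of Section \ref{balancedsection}. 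The paper instead argues structurally: from $A=J_1J_2$, $B=J_2J_1$ and $J_i^3=1$ one gets $(AB)^{-1}=J_1^{-1}J_2J_1^{-1}=J_1AJ_1^{-1}=J_1^{-1}BJ_1$, so $A$, $B$ and $(AB)^{-1}$ are conjugate, hence have equal eigenvalues at corresponding fixed points, and Lemma \ref{eigencross} together with Corollary \ref{3spherecrossratio} then forces each eigenvalue to be the same cube root of $\X$. The paper's route is coordinate-free and exhibits the three-fold symmetry as the conceptual reason for the eigenvalue coincidence, giving (i)$\Rightarrow$(ii) with no computation at all; your route is more concrete, handles both implications at once through the single criterion ``(i) holds iff $\lambda_A$ and $\lambda_B$ equal the common eigenvalue of the two products,'' and --- a genuine plus --- makes explicit the ${\rm SU}(2,1)$-versus-${\rm PU}(2,1)$ lift bookkeeping (rescaling $J_1$, $J_2$ multiplies \emph{both} products by the same cube root of unity, so the two eigenvalues move together), a point on which the paper is silent. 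Both arguments rest on the same implicit non-degeneracy assumptions (vertices pairwise distinct, neither triple on a complex line) needed for $J_1$, $J_2$ to be well defined and for Proposition \ref{u-isom} to apply.
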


\begin{proof}
Suppose that $A=J_1J_2$ and $B=J_2J_1$. Then 
$
(AB)^{-1}=J_1^{-1}J_2J_1^{-1}=J_1^{-1}BJ_1=J_1AJ_1^{-1}.
$
Therefore, $A$, $B$ and $C=B^{-1}A^{-1}$ are all conjugate, and so 
$\lambda_A=\lambda_B=\lambda_C$. Using Corollary \ref{3spherecrossratio} 
they must be all equal to the same cube root of $\X(p_A,p_B,p_{AB},p_{BA})$.

Conversely, assume that $\lambda_A=\lambda_B$
and $\lambda_A^3=\X(p_A,p_B,p_{AB},p_{BA})=e^{-2i\theta-2i\phi}$, and
consider the two isometries
$$
A'=J_1J_2\quad\mbox{ and }\quad B'=J_2J_1.
$$ 
Clearly $A'$ and $B'$ are conjugate. Moreover, they are also conjugate to
$$
C'=(A'B')^{-1}=J_1^{-1}J_2J_1^{-1}=J_1A'J_1^{-1}.
$$ 
From the definition of $J_1$ and $J_2$ we see that 
$A'(p_A)=J_1J_2(p_A)=J_1(p_B)=p_A$ so $A'$ fixes $p_A$.
Similarly $B'$ fixes $p_B$, $A'B'$ fixes $p_{AB}$ and $B'A'$ fixes $p_{BA}$. 
As a consequence of Lemma \ref{eigencross}, we see that the eigenvalues 
$\lambda_{A'}$, $\lambda_{B'}$, $\lambda_{C'}$ satisfy 
$$
\X(p_{A},p_B,p_{AB},p_{BA})
=\dfrac{1}{\overline\lambda_{A'}\overline\lambda_{B'}\overline\lambda_{C'}}.
$$
As the cross ratio has unit modulus, it implies that the three eigenvalue 
have unit modulus. As they are equal (the three isometries are conjugate), 
they are all equal to the same cube root of $\X(p_{A},p_B,p_{AB},p_{BA})$
Using Proposition \ref{u-isom}, this implies that $A=A'$ and $B=B'$. 
\end{proof}

\medskip

The following proposition is a straightforward corollary.

\begin{corollary}
The following two conditions are equivalent.
\begin{enumerate}
\item The eigenvalue $\lambda$ of $J_1J_2$ associated with $p_1$ has 
unit modulus.
\item The tetrahedron $(p_1,p_2,p_3,p_4)$ is balanced.
\end{enumerate}
In this case,  $\X(p_1,p_2,p_3,p_4)=\lambda^3$.
\end{corollary}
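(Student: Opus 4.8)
The plan is to read off both the equivalence and the closing identity from the conjugacy structure of $J_1$ and $J_2$, using Lemma \ref{eigencross} and Proposition \ref{unitCR} rather than computing with the explicit matrices. First I set $A=J_1J_2$ and $B=J_2J_1$ and carry out the fixed-point bookkeeping exactly as in the proof of Theorem \ref{thm-symmetric-groups}: since $J_1$ sends $p_2\mapsto p_1\mapsto p_3$ and $J_2$ sends $p_1\mapsto p_2\mapsto p_4$, the map $A$ fixes $p_1$, $B$ fixes $p_2$, $AB$ fixes $p_3$, and $BA$ fixes $p_4$. Write $\lambda$ for the eigenvalue of $A=J_1J_2$ at $p_1$, as in statement (1).

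The preliminary step is to note that $A$, $B$ and $C=(AB)^{-1}$ are mutually conjugate. Using only $J_1^3=J_2^3={\rm id}$ one verifies the formal identities $B=J_1^{-1}AJ_1$ and $C=(AB)^{-1}=J_1^{-1}J_2J_1^{-1}=J_1AJ_1^{-1}$, which hold for an arbitrary tetrahedron, balanced or not. Since $J_1^{-1}$ carries $p_1$ to $p_2$ and $J_1$ carries $p_1$ to $p_3$, and conjugation preserves eigenvalues while moving fixed points accordingly, the eigenvalue of $B$ at $p_2$ and the eigenvalue of $C$ at $p_3$ are both equal to $\lambda$. Consequently the eigenvalue $\lambda_{AB}$ of $AB=C^{-1}$ at $p_3$ equals $\lambda^{-1}$.

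I then apply Lemma \ref{eigencross} to the pair $A$, $B$; its hypothesis $Ap_4=p_3$ holds because $J_2$ sends $p_4$ to $p_1$ and $J_1$ sends $p_1$ to $p_3$. Substituting $\lambda_A=\lambda_B=\lambda$ and $\lambda_{AB}=\lambda^{-1}$ into part (2) of the lemma yields
$$
\X(p_1,p_2,p_3,p_4)=\frac{1}{\overline{\lambda}\,\overline{\lambda}\,\lambda^{-1}}=\frac{\lambda}{\overline{\lambda}^{2}},
$$
so that $\bigl|\X(p_1,p_2,p_3,p_4)\bigr|=1/|\lambda|$. By Proposition \ref{unitCR} the tetrahedron is balanced exactly when the cross ratio has unit modulus, and by the displayed formula this occurs precisely when $|\lambda|=1$; this is the asserted equivalence of (1) and (2). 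When these conditions hold, $\overline{\lambda}=\lambda^{-1}$, hence $\X(p_1,p_2,p_3,p_4)=\lambda/\overline{\lambda}^{2}=\lambda^{3}$, which is the final identity.

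The step needing the most care is the fixed-point bookkeeping of the second paragraph: one must track precisely which boundary point each of $A$, $B$ and $C$ fixes, so that the three eigenvalues being matched really are all the single number $\lambda$, and one must record that the four vertices are pairwise distinct so that Lemma \ref{eigencross} applies (and that the triples defining $J_1$ and $J_2$ avoid complex lines, so these maps exist by Definition \ref{def-J1-J2}). Everything after that is a direct substitution into results already established.
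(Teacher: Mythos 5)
Your proof is correct and follows essentially the same route as the paper: the corollary is exactly the computation embedded in the converse direction of the proof of Theorem \ref{thm-symmetric-groups} (fixed-point bookkeeping for $A=J_1J_2$, $B=J_2J_1$, conjugacy of $A$, $B$, $(AB)^{-1}$ via $J_1$, then Lemma \ref{eigencross} combined with Proposition \ref{unitCR}). If anything, your derivation of $\X(p_1,p_2,p_3,p_4)=\lambda/\overline{\lambda}^{2}$, hence $|\X|=1/|\lambda|$, is slightly more careful than the paper's displayed formula, which writes $\overline{\lambda}_{C'}$ in place of $\lambda_{A'B'}=\lambda_{C'}^{-1}$ and is therefore literally valid only once $|\lambda_{C'}|=1$ is known.
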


Because $J_1$ and $J_2$ have order three, we see that
\begin{eqnarray}
% % AB & = & J_1J_2J_2J_1=J_1J_2^{-1}J_1^{-1}J_1^{-1}=J_1 A^{-1} J_1^{-1}, \\
% % BA & = & J_2J_1J_1J_2=J_2J_1^{-1}J_2^{-1}J_2^{-1}=J_2B^{-1}J_2^{-1}, \\
AB^{-1} & = & J_1J_2J_1^{-1}J_2^{-1}=[J_1,J_2], \label{eq-ABinv} \\
\lbrack A,B\rbrack & = & 
ABA^{-1}B^{-1}
=(J_1J_2)(J_2J_1)(J_2^{-1}J_1^{-1})(J_1^{-1}J_2^{-1})=(J_1J_2^{-1})^3.
\label{eq-comm-AB}
\end{eqnarray}

\subsection{Parameters \label{section-params}}
%From Theorem \ref{thm-main1} we see that there is a bijection between
%the set of ${\rm PU}(2,1)$-orbits of balanced tetrahedra and the
%conjugacy classes of groups $\Gamma<{\rm PU}(2,1)$ generated by $J_1$
%and $J_2$ with $J_1$, $J_2$ regular elliptic of order three and
%$J_1J_2$ parabolic. 
We have seen, Proposition \ref{prop-tet-params},
that a balanced tetrahedron with ideal vertices $p_1$, $p_2$, $p_3$ and $p_4$ 
is determined up to ${\rm PU}(2,1)$ equivalence by 
$$
2\theta=\A(p_2,p_1,p_3),\quad
2\phi=\A(p_1,p_2,p_4),\quad
4\psi={\rm arg}\bigl({\mathbb B}(p_1,p_2,p_3,p_4)\bigr).
$$
In the next sections we write certain traces in terms of these
parameters $\theta$, $\phi$, $\psi$. We will then obtain equations
in these variables that determine when certain words in the group
$\Gamma=\langle J_1,\,J_2\rangle$ are parabolic or unipotent.
It turns out that many of these computations become
easier if we switch to the following real variables.
\begin{equation}\label{eq-new-params}
x=4\sqrt{\cos(2\theta)\cos(2\phi)}\cos(2\psi), \quad
y=4\sqrt{\cos(2\theta)\cos(2\phi)}\sin(2\psi), \quad
z=4\cos(\theta-\phi).
\end{equation}
Recall that $\theta\in [-\pi/4,\pi/4]$, $\phi\in[-\pi/4,\pi/4]$ and 
$\psi\in[0,\pi/2]$.
Note that $z\ge 0$ with equality if and only if $\phi=-\theta=\pm\pi/4$
and $z\le 4$ with equality if and only if $\phi=\theta$. Furthermore, note that
\begin{eqnarray*}
2\cos^2(\theta-\phi) & = & 1+\cos(2\theta-2\phi) \\
& \ge & \cos(2\theta+2\phi)+\cos(2\theta-2\phi) \\
& = & 2\cos(2\theta)\cos(2\phi).
\end{eqnarray*}
This implies that $z^2\ge x^2+y^2$ with equality if and only if $\phi=-\theta$.
The latter inequality implies $-z\le x\le z$ and $-z\le y\le z$. Note for 
later use that in particular the condition $x=z$ implies that $\phi=-\theta$ and $\psi=0$.
The Jacobian associated to the change of variable \eqref{eq-new-params} is
${\mathcal J} =128\sin(2\theta+2\phi)\sin(\theta-\phi)$.
% % \begin{eqnarray*}
% % {\mathcal J} 
% % & = & 
% % {\rm det}\left(\begin{matrix}
% % -4\sin(2\theta)\sqrt{\dfrac{\cos(2\phi)}{\cos(2\theta)}}\cos(2\psi) &
% % -4\sin(2\theta)\sqrt{\dfrac{\cos(2\phi)}{\cos(2\theta)}}\sin(2\psi) &
% % -4\sin(\theta-\phi) \\
% % & & \\
% % -4\sin(2\phi)\sqrt{\dfrac{\cos(2\theta)}{\cos(2\phi)}}\cos(2\psi) &
% % -4\sin(2\phi)\sqrt{\dfrac{\cos(2\theta)}{\cos(2\phi)}}\sin(2\psi) &
% % 4\sin(\theta-\phi) \\
% % & & \\
% % -8\sqrt{\cos(2\theta)\cos(2\phi)}\sin(2\psi) &
% % 8\sqrt{\cos(2\theta)\cos(2\phi)}\cos(2\psi)  &
% % 0
% % \end{matrix}\right) \\
% % &&\\
% % & = & 128\sin(2\theta+2\phi)\sin(\theta-\phi).
% % \end{eqnarray*}
Therefore, this change of variables is a local diffeomorphism at all
points where $\theta\neq \pm \phi$. 
\subsection{Ruling out complex reflections.\label{sec-no-cx-refl}}
The goal of this section is to describe the isometry type of certain 
elements of the group $\la J_1,J_2\ra$, and show that they can not be complex 
reflections. More precisely, we are going to prove that if $J_1J_2$, 
$J_1J_2^{-1}$ or $[J_1,J_2]$ has a neutral fixed point, then it is either 
parabolic of the identity. We begin by studying the product $J_1J_2$. 
It is possible to find an expression for $A=J_1J_2$ 
and $B=J_2J_1$ by plugging $\lambda_A=\lambda_B=e^{-2i\theta/3-2i\phi/3}$ in 
\eqref{eq-A} and \eqref{eq-B}. This leads to
% To simplify the notation, write 
% $c_1=\sqrt{2\cos(2\theta)}$ and $c_2=\sqrt{2\cos(2\phi)}$. Then
% \begin{eqnarray}
% J_1J_2
% & = & e^{i\theta/3+i\phi/3}
% \left[\begin{matrix}
% e^{-i\theta-i\phi} &
% -e^{i\phi+i\psi}c_1
% +e^{-i\theta-i\psi}c_2 &
% -e^{i\theta-i\phi}+e^{2i\psi}c_1c_2
% -e^{-i\theta+i\phi} \\
% 0 & e^{i\theta+i\phi} &
% e^{-i\phi-i\psi}c_1
% -e^{i\theta+i\psi}c_2 \\
% 0 & 0 & e^{-i\theta-i\phi} \end{matrix}\right], \label{eq-mat-J1J2}\\
% J_1^{-1}J_2^{-1}
% & = & e^{-i\theta/3-i\phi/3}\left[\begin{matrix}
% e^{i\theta+i\phi} & 0 & 0 \\
% -e^{i\phi-i\psi}c_1
% +e^{-i\theta+i\psi}c_2 & e^{-i\theta-i\phi} & 0 \\
% -e^{i\theta-i\phi}+e^{2i\psi}c_1c_2
% -e^{-i\theta+i\phi} &
% e^{-i\phi+i\psi}c_1-e^{i\theta-i\psi}c_2 & e^{i\theta+i\phi}
% \end{matrix}\right], \label{eq-mat-J1iJ2i}\\
% \end{eqnarray}
\begin{equation} 
{\rm tr}(J_1J_2) = 2e^{-2i\theta/3-2i\phi/3}+e^{4i\theta/3+4i\phi/3}. \label{trJ1J2}
\end{equation}
In particular ${\rm tr}(J_1J_2)$ lies on the deltoid curve described in 
Section \ref{section-conj-class} 
(see figure \ref{parabcurve}), and we have to decide if $J_1J_2$ is parabolic, 
a complex reflection or the identity.

\begin{proposition} \label{prop-A-parab-identity}
The map $J_1J_2$ is always parabolic unless
$p_{AB}=p_{BA}$, in which case it is the identity. In particular, it
cannot be a non trivial reflection.  
\end{proposition}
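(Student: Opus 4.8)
The plan is to show that $J_1J_2$ has a neutral fixed point (its trace lies on the deltoid by \eqref{trJ1J2}), and then invoke the semi-simplicity criterion: a complex reflection has a diagonalisable (semi-simple) lift, whereas a parabolic does not. So the whole task reduces to proving that the lift $J_1J_2$, when it is not the identity, is \emph{not} diagonalisable, equivalently that $J_1J_2 - \lambda\cdot I$ has rank $\geq 2$ for the repeated eigenvalue $\lambda = e^{-2i\theta/3 - 2i\phi/3}$. This exactly parallels the rank computations already carried out in the proof of Proposition \ref{u-isom}, where ellipticity was detected by the rank of $M_1 - \lambda\cdot I$ dropping to one.

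First I would recall from Theorem \ref{thm-symmetric-groups} (and the surrounding discussion) that $A = J_1J_2$ is exactly the parabolic map $A$ constructed in \eqref{eq-A} with the specialisation $\lambda_A = \lambda_B = e^{-2i\theta/3 - 2i\phi/3}$; so I have an explicit matrix in hand. By Proposition \ref{prop-class-trace} and \eqref{trJ1J2}, its trace lies on the null locus of $f$, so $A$ is either parabolic, a complex reflection, or (in the unipotent case) possibly the identity. To distinguish parabolic from complex reflection I would compute the rank of $A - \lambda_A\cdot I$ using the explicit entries of \eqref{eq-A}: the first column is already $(\lambda_A,0,0)^{\mathrm t}$, so $A - \lambda_A I$ has a zero first column, and I must check that the remaining $2\times 2$ block is not identically zero. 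The off-diagonal entries of that block involve $c_1 = \sqrt{2\cos(2\theta)}$ and $c_2 = \sqrt{2\cos(2\phi)}$ together with the phase factors; the matrix fails to have rank $\geq 2$ precisely when all these entries vanish simultaneously.

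The key step is therefore to identify geometrically when $A - \lambda_A I$ drops to rank $0$ or $1$, and to show this forces $p_{AB} = p_{BA}$. I would argue that the condition for $A=J_1J_2$ to be semi-simple is the vanishing of the same bracket of Hermitian products that appeared in Proposition \ref{u-isom}, namely a relation of the form $\lambda^3 = -e^{2i\A(\cdot)}$, but here, because $A$ genuinely fixes $p_A$ and maps $p_{BA}\mapsto p_{AB}$, the degenerate case collapses the two boundary points $p_{AB}$ and $p_{BA}$ onto each other. Concretely, reading off the entries of \eqref{eq-A}, the simultaneous vanishing of the super-diagonal entries forces $c_1 e^{i\theta} = c_2 e^{i\phi}$ type identities together with $\psi = 0$, which via the normalisation \eqref{balanced} is exactly the statement $\bp_{AB} = \bp_{BA}$ projectively. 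When that holds, $A$ fixes a single boundary point and maps it to itself trivially, so it is the identity; otherwise the block is nonzero, the rank is $\geq 2$, the lift is not semi-simple, and $A$ is parabolic rather than a complex reflection.

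The main obstacle I anticipate is the bookkeeping in the degenerate case: verifying that the algebraic condition ``$A - \lambda_A I$ has rank $\leq 1$'' is genuinely equivalent to $p_{AB} = p_{BA}$ and not merely to some weaker coincidence of parameters. This requires carefully matching the vanishing of the matrix entries of \eqref{eq-A} against the coordinate expressions for $\bp_{AB}$ and $\bp_{BA}$ in \eqref{balanced}, and checking that $p_{AB}=p_{BA}$ indeed forces $A$ to be the identity (rather than, say, a complex reflection fixing a whole complex line). I would handle this by noting that if $p_{AB}=p_{BA}$ then $A$ fixes $p_A$ and also fixes $p_{AB}=p_{BA}$ with unit-modulus eigenvalues summing with the correct trace, and since $A = J_1 J_2$ acts as the identity on the relevant triple of fixed points it must be trivial; the only subtlety is to rule out a non-trivial complex reflection, which the rank computation does directly.
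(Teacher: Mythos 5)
Your proposal is correct in outline, but it takes a genuinely different route from the paper. The paper never computes with the explicit matrix \eqref{eq-A}: it applies the criterion of Proposition \ref{u-isom} with $(p,q,r)=(p_A,p_{BA},p_{AB})$, so that $J_1J_2$ can be a complex reflection only if $\lambda_A^3=-e^{2i\A(p_A,p_{BA},p_{AB})}$; it combines this with $\lambda_A^3=\X(p_A,p_B,p_{AB},p_{BA})$ (Corollary \ref{3spherecrossratio} and the three-fold symmetry), and then uses part 3 of Lemma \ref{eigencross} together with the cocycle relation for the Cartan invariant to conclude that all four vertices would have to lie in a common complex line with $p_A$ and $p_B$ on the same side of the geodesic through $p_{AB}$ and $p_{BA}$; balancedness then forces $p_{AB}=p_{BA}$. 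That argument is normalisation-free and shows geometrically where the balanced hypothesis enters. Your route---trace on the deltoid plus a rank/semi-simplicity analysis of the upper-triangular matrix \eqref{eq-A} specialised at $\lambda_A=\lambda_B=e^{-2i\theta/3-2i\phi/3}$---is more elementary and more explicit, and it meshes well with Section \ref{section-params}: the determinant of the relevant $2\times 2$ block comes out to $\lambda_A^2e^{i\theta+i\phi}(x-z)$ in the coordinates \eqref{eq-new-params}, which ties your degeneration condition directly to Remark \ref{rem-x=z} and to the trace formula \eqref{trJ1J2m}.

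One step of your plan is misstated, and the repair is exactly the computation you postpone. Write $M$ for the $2\times 2$ block of $A-\lambda_A I$ on rows $1,2$ and columns $2,3$; note that its lower-left entry is the diagonal difference $\overline{\lambda}_A^2-\lambda_A$, not an off-diagonal entry of $A$. The condition ${\rm rank}(A-\lambda_A I)\le 1$ is $\det M=0$, \emph{not} the simultaneous vanishing of the entries of $M$: a block of rank exactly one would be precisely a nontrivial complex reflection when $\lambda_A^3\neq 1$ (and a $2$-step unipotent, hence still parabolic, when $\lambda_A^3=1$). So your sentence ``the matrix fails to have rank $\geq 2$ precisely when all these entries vanish simultaneously'' is the conclusion of the proposition, not something you may assume; ruling out the rank-one case is the whole point. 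The gap does close: computing, one finds $\det M=\lambda_A^2e^{i\theta+i\phi}(x-z)$, and as noted in Section \ref{section-params} the condition $x=z$ forces $\phi=-\theta$ and $\psi=0$ (or the degenerate case $\cos2\theta=\cos2\phi=0$), and in either case every entry of $M$ vanishes, so that $A=\lambda_A I$ and, by \eqref{balanced}, $p_{AB}=p_{BA}$. With that determinant in hand your case analysis is complete: $\det M\neq 0$ gives rank $2$, so $A$ is not semi-simple and hence parabolic by Proposition \ref{prop-class-trace}; $\det M=0$ gives the identity.
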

\begin{proof} 
Using Proposition \ref{u-isom} with $p=p_A$, $q=p_{BA}$ and
$r=p_{AB}$, we see that $J_1J_2$ is a complex reflection if and only if its 
eigenvalue $\lambda_A$ associated to $p_A$ satisfies 
$\lambda_A^3=-\exp\bigl(2i\A(p_A,p_{BA},p_{AB})\bigr)$.  But
we know from Corollary \ref{3spherecrossratio} and the three-fold symmetry that 
$$
\lambda_A^3=\X(p_A,p_B,p_{AB},p_{BA}).
$$ 
Combining these two relations, taking argument on both sides, and
using part 3 of Lemma \ref{eigencross},  we obtain that
\begin{equation}\label{eq-argu-cr}
\A(p_A,p_B,p_{AB})-\A(p_A,p_B,p_{BA})=\pi+2\A(p_A,p_{BA},p_{AB})\mod 2\pi.
\end{equation}
On the other hand, the cocycle relation of the Cartan invariant
(Corollary 7.1.12 of \cite{Goldbook}) gives us
\begin{equation}\label{eq-cocycle}
\A(p_A,p_B,p_{AB})-\A(p_A,p_B,p_{BA})+\A(p_A,p_{AB},p_{BA})-\A(p_B,p_{AB},p_{BA})=0.
\end{equation}
Summing equations \eqref{eq-argu-cr} and \eqref{eq-cocycle} gives
$$
\A(p_A,p_{AB},p_{BA})+\A(p_B,p_{AB},p_{BA})=\pi \mod 2\pi.
$$
As these two Cartan invariants belong to $[-\pi/2,\pi/2]$ (see Chapter
7 of \cite{Goldbook}), they must be either both equal to $\pi/2$ or
both equal to $-\pi/2$. This means that the four points $p_A$, $p_B$,
$p_{AB}$ and $p_{BA}$ belongs to a common complex line $L$ (Corollary
7.1.13 of \cite{Goldbook}). Moreover the fact that
$\A(p_A,p_{AB},p_{BA})$ and $\A(p_B,p_{AB},p_{BA})$ have the same sign
means that $p_A$ and $p_B$ lie on the same side of the geodesic
connecting $p_{AB}$ and $p_{BA}$. As the tetrahedron
$(p_A,p_B,p_{AB},p_{BA})$ is balanced, $p_{AB}$ and $p_{BA}$
orthogonally project onto the same point of the geodesic
$(p_Ap_B)$. This is only possible when $p_{AB}=p_{BA}$. This implies that 
$J_2=J_1^{-1}$.
\end{proof} 

In $(\theta,\phi,\psi)$-coordinates, it is straightforward to check that
$p_{AB}=p_{BA}$ if and only if $\psi=0$ and $\theta=-\phi$. 
Therefore we see that $J_1J_2$ can only be a complex reflection when 
$\phi=-\theta$ and $\psi=0$. Plugging these values in \eqref{eq-J2} 
and \eqref{eq-J1i}, we see that this implies $J_2=J_1^{-1}$.

\begin{remark}\label{rem-x=z}
Note that the relation  
$2\cos(\theta-\phi)-2\sqrt{\cos(2\theta)\cos(2\phi)}\cos(2\psi)=0$ rewrites in 
$(x,y,z)$ coordinates as $x=z$. The previous discussion shows thus that 
$x=z$ implies that $J_1J_2$ is the identity.
\end{remark}

\begin{corollary}\label{cor-J1J2i-comm-not para}
The maps $J_1J_2^{-1}$ and $[J_1J_2]$ are never complex reflections.
\end{corollary}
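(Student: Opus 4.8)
The plan is to follow the strategy of Proposition \ref{prop-A-parab-identity}. Recall that a map with a neutral fixed point on $\partial\HdC$ is either parabolic or a complex reflection (and a regular elliptic map has no neutral boundary fixed point at all). Hence for each of $g=J_1J_2^{-1}$ and $g=[J_1,J_2]$ it suffices to assume that $\tr(g)$ lies on the deltoid, equivalently that $f(\tr(g))=0$ by Proposition \ref{prop-class-trace}, and then to rule out semisimplicity of a lift, since semisimplicity is exactly what distinguishes the lift of a complex reflection from that of a parabolic.

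The principal difficulty, compared with $J_1J_2$, is that neither $J_1J_2^{-1}$ nor $[J_1,J_2]$ fixes one of the four vertices $p_A,p_B,p_{AB},p_{BA}$: chasing the cyclic actions of Definition \ref{def-J1-J2} shows, for instance, that $J_1J_2^{-1}$ merely sends $p_B\mapsto p_{AB}$ and $p_{BA}\mapsto p_A$ while moving $p_A$ and $p_{AB}$ off the vertex set. Thus the clean anchor used in Proposition \ref{prop-A-parab-identity}, namely applying Proposition \ref{u-isom} at $p=p_A$, is no longer available. I would therefore locate the neutral fixed point directly from the explicit matrices \eqref{eq-J1}, \eqref{eq-J2} and \eqref{eq-J1i}: form the product, read off the repeated eigenvalue $\mu$ from $\tr(g)=2\mu+\mu^{-2}$ on the deltoid locus, and compute the associated eigenvector, which we take as the point $p$ to which Proposition \ref{u-isom} is applied.

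Concretely, since any lift lies in ${\rm SU}(2,1)$, on the deltoid its characteristic polynomial factors as $(t-\mu)^2(t-\mu^{-2})$, and $g$ is a complex reflection precisely when its minimal polynomial drops to $(t-\mu)(t-\mu^{-2})$, that is when the matrix identity $(g-\mu I)(g-\mu^{-2}I)=0$ holds. The heart of the proof is to show that this identity never holds except on the degenerate locus where $g$ collapses to the identity, the analogue of the relation $p_{AB}=p_{BA}$, equivalently $x=z$, of Remark \ref{rem-x=z}. In the spirit of Proposition \ref{prop-A-parab-identity} one can instead combine the complex-reflection criterion $\mu^3=-e^{2i\A(p,q,g(q))}$ of Proposition \ref{u-isom}, applied for a convenient auxiliary vertex $q$, with the value of $\mu^3$ forced by the three-fold symmetry and Corollary \ref{3spherecrossratio}; feeding the resulting angular identity into the Cartan cocycle relation should again push the relevant points onto a common complex line and thereby trivialise $g$.

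The step I expect to be the main obstacle is precisely this final one: producing the neutral fixed point $p$ and the Cartan invariants $\A(p,q,g(q))$ in closed form, or equivalently verifying $(g-\mu I)(g-\mu^{-2}I)\neq 0$, is a genuinely heavier computation than in Proposition \ref{prop-A-parab-identity} because no vertex serves as the fixed point. For $[J_1,J_2]=AB^{-1}$ one can lighten the bookkeeping by using that $A$ and $B$ are conjugate parabolics with equal eigenvalues, and for $J_1J_2^{-1}$ one can exploit the identity $(J_1J_2^{-1})^3=[A,B]$ from \eqref{eq-comm-AB} to transfer information from the commutator; but in both cases the crux remains showing that the semisimplicity (reflection) condition is incompatible with the non-degeneracy of the balanced tetrahedron, leaving only the parabolic or identity alternatives.
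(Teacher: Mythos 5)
Your proposal correctly frames the problem (deltoid locus, semisimple versus non-semisimple lift), and your observation that $J_1J_2^{-1}$ and $[J_1,J_2]$ fix no vertex of the tetrahedron $(p_A,p_B,p_{AB},p_{BA})$ is accurate. But the proposal stops exactly where a proof would have to begin: you yourself flag that the decisive step --- producing the neutral fixed point and the relevant Cartan invariants in closed form, or verifying $(g-\mu I)(g-\mu^{-2}I)\neq 0$ --- is the main obstacle, and you do not carry it out for either map. As it stands, this is a plan whose crux is missing, not a proof.

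The missing idea is that no computation is needed, because the difficulty you identify is an artefact of looking at the wrong tetrahedron. The proof of Proposition \ref{prop-A-parab-identity} uses only two facts about the pair $(J_1,J_2)$: both generators have order three, and their product has a neutral fixed point on $\partial\HdC$. Now $J_1J_2^{-1}$ is itself the product of the two order-three maps $J_1$ and $J_2^{-1}$, and $[J_1,J_2]=J_1\cdot\bigl(J_2J_1^{-1}J_2^{-1}\bigr)$ is the product of the two order-three maps $J_1$ and $J_2J_1^{-1}J_2^{-1}$. So if either map has a neutral boundary fixed point $p$, you can run the whole argument of Proposition \ref{prop-A-parab-identity} on the \emph{new} pair of generators: $p$ together with its images under the new generators forms the fixed-point tetrahedron of that pair; it is balanced because the eigenvalue at $p$ has unit modulus; and the reflection criterion of Proposition \ref{u-isom} combined with the Cartan cocycle relation forces this new tetrahedron to degenerate, so the map is parabolic or the identity. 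This is precisely the paper's proof (``by changing $J_2$ to $J_2^{-1}$ or $J_2J_1J_2^{-1}$''), and it makes both of your proposed computations unnecessary. Your ``anchor'' objection only establishes that these maps fix no vertex of the tetrahedron attached to $(J_1,J_2)$; each of them does fix a vertex of the tetrahedron attached to its own pair of order-three generators, and that is all the argument of Proposition \ref{prop-A-parab-identity} ever used.
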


\begin{proof}
In Proposition \ref{prop-A-parab-identity} the only facts we have used 
about $J_1$ and $J_2$ are that $J_1$ and $J_2$ have order three and
their product has a neutral fixed point on the boundary. 
By changing $J_2$ to $J_2^{-1}$ or $J_2J_1J_2^{-1}$ respectively,
we see that if $J_1J_2^{-1}$ or $[J_1,J_2]$ has a neutral fixed
point on the boundary then it is parabolic or the identity.
\end{proof} 
The following result is a straightforward consequence of the previous Proposition  \ref{prop-A-parab-identity} 
(note that a $(3,3,\infty)$-group is a group generated by two order three elements of which product is parabolic).
\begin{theorem}\label{thm-main1}
There is a bijection between the set of ${\rm PU}(2,1)$-orbits of non 
degenerate balanced tetrahedra, and the set of ${\rm PU}(2,1)$-conjugacy 
classes of $(3,3,\infty)$-groups in ${\rm PU}(2,1)$.
\end{theorem}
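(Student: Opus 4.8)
The plan is to exhibit explicit maps in both directions and to check that they are mutually inverse and ${\rm PU}(2,1)$-equivariant; here I understand a \emph{non-degenerate} balanced tetrahedron to be an ordered $4$-tuple $(p_A,p_B,p_{AB},p_{BA})$ of distinct boundary points, balanced, with neither of the triples $(p_B,p_A,p_{AB})$ and $(p_A,p_B,p_{BA})$ lying in a complex line. In the forward direction I would send such a $\tau$ to the pair of order three maps $J_1,J_2$ of Definition \ref{def-J1-J2}. Non-degeneracy guarantees that the cyclically permuted triples do not lie in a complex line, so by that definition $J_1$ and $J_2$ exist and are unique. Proposition \ref{prop-A-parab-identity} then shows that $J_1J_2$ is parabolic, the excluded case $p_{AB}=p_{BA}$ being precisely a degenerate tetrahedron, so $\Gamma_\tau=\langle J_1,J_2\rangle$ is a $(3,3,\infty)$-group with marked generators $(J_1,J_2)$. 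Conjugating $\tau$ by $g\in{\rm PU}(2,1)$ replaces the permutation conditions of Definition \ref{def-J1-J2} by their $g$-conjugates, so uniqueness forces $J_i\mapsto gJ_ig^{-1}$; hence the construction descends to a map from ${\rm PU}(2,1)$-orbits of tetrahedra to conjugacy classes of $(3,3,\infty)$-groups.

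For the inverse, given a $(3,3,\infty)$-group with ordered generators $(J_1,J_2)$ I would let $p_A$ be the unique boundary fixed point of the parabolic $J_1J_2$ and set $p_B=J_2(p_A)$, $p_{AB}=J_1(p_A)$ and $p_{BA}=J_2^{-1}(p_A)$. The relation $J_1J_2(p_A)=p_A$ gives $J_1(p_B)=p_A$, and since $J_1$ has order three this yields $J_1^{-1}(p_A)=p_B$; thus $J_1$ cyclically permutes $p_B,p_A,p_{AB}$ and, symmetrically, $J_2$ cyclically permutes $p_A,p_B,p_{BA}$, so these are exactly the maps of Definition \ref{def-J1-J2} for the resulting $4$-tuple. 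Since $J_2J_1$ is conjugate to $J_1J_2$ (via $J_1$) and $(J_1J_2)(J_2J_1)=J_1J_2^{-1}J_1$ is conjugate to $J_1J_2$ as well, the maps $J_1J_2$, $J_2J_1$ and their product are all parabolic, so Corollary \ref{3spherecrossratio} gives that $\X(p_A,p_B,p_{AB},p_{BA})$ has unit modulus, whence the tetrahedron is balanced by Proposition \ref{unitCR}. This map is again equivariant because $p_A$ and its images under $J_1,J_2$ transform naturally under conjugation.

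That the two maps are mutually inverse follows from the uniqueness in Definition \ref{def-J1-J2} together with the permutation bookkeeping above. Starting from $\tau$, the associated $J_1,J_2$ recover $p_A$ as the fixed point of the parabolic $J_1J_2$ and then $p_B,p_{AB},p_{BA}$ as $J_2(p_A),J_1(p_A),J_2^{-1}(p_A)$, returning $\tau$; conversely, starting from $(J_1,J_2)$, the tetrahedron just built has associated order three maps cyclically permuting the same triples, hence equal to $J_1,J_2$ by uniqueness (Proposition \ref{u-isom}).

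I expect the main obstacle to be not any single computation but the careful matching of the non-degeneracy hypotheses and the associated markings. Concretely, I must make sure the tetrahedron produced from $(J_1,J_2)$ is genuinely non-degenerate, namely that its four vertices are distinct and that the relevant triples avoid complex lines; this is exactly where I will use that $J_1,J_2$ are regular elliptic of order three and that $J_1J_2$ is a genuine parabolic rather than the identity, as supplied by Proposition \ref{prop-A-parab-identity}. I should also be explicit that the tetrahedron is an \emph{ordered} $4$-tuple and the $(3,3,\infty)$-group is taken with its ordered generating pair $(J_1,J_2)$ up to simultaneous conjugacy, so that the markings correspond; otherwise the pair $(J_2,J_1)$ would merely relabel the same four points, and the correspondence would be a bijection only after quotienting by this symmetry.
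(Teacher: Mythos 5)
Your proposal is correct and takes essentially the same route as the paper: the paper's own proof is just the one-sentence remark that the theorem is a ``straightforward consequence'' of Proposition \ref{prop-A-parab-identity}, and you have supplied precisely the details that remark leaves implicit --- the forward map via Definition \ref{def-J1-J2} with parabolicity from Proposition \ref{prop-A-parab-identity}, the inverse map via fixed points with balancedness from Corollary \ref{3spherecrossratio} and Proposition \ref{unitCR}, and equivariance/uniqueness from Proposition \ref{u-isom}. Your closing caveats about orderings, markings of the generating pair, and the complex-line non-degeneracy condition are sensible tightenings of conventions the paper glosses over, not deviations from its argument.
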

Here by non degenerate, we mean the the four vertices of the tetrahedron are 
pairwise distinct.
\begin{remark}\label{rem-J1J2inv}
It follows from Corollary \ref{cor-J1J2i-comm-not para} that whenever 
$f(\tr(J_1J_2^{-1}))=0$, then  
$J_1J_2^{-1}$ is parabolic or the identity. For later use, we compute 
$f(\tr(J_1J_2^{-1}))$. First, a 
simple computation shows
\begin{equation}J_1J_2^{-1} = e^{i\theta/3-i\phi/3}\left[\begin{matrix}
e^{i\theta-i\phi}-c_1c_2e^{2i\psi}+e^{-i\theta+i\phi} &
-c_1e^{-i\phi+i\psi}+c_2e^{i\theta-i\psi} & 
-e^{i\theta+i\phi} \\
-c_1e^{-i\phi-i\psi}+c_2e^{i\theta+i\psi} &
e^{i\theta-i\phi}-c_1c_2e^{-2i\psi} & 
c_1e^{i\phi-i\psi} \\
-e^{-i\theta-i\phi} & -c_2e^{-i\theta-i\psi} & e^{-i\theta+i\phi}
\end{matrix}\right]\label{eq-mat-J1J2i}.
\end{equation}
Therefore
\begin{eqnarray}
{\rm tr}(J_1J_2^{-1}) & = & 
e^{i\theta/3-i\phi/3}\Bigl(4\cos(\theta-\phi)
-4\sqrt{\cos(2\theta)\cos(2\phi)}\cos(2\psi)\Bigr)\nonumber\\
& = & e^{i\theta/3-i\phi/3}\left(z-x\right)\label{trJ1J2m}
\end{eqnarray}
Plugging this value into Proposition \ref{prop-class-trace}, we obtain 
after rearranging that 
\begin{equation}\label{eq-tr-J1J2m-delto}f\left({\rm tr}(J_1J_2^{-1})\right)=(x-z)^2\left(x^2-z^2+18\right)-27.
\end{equation}
The hypersurface defined by this equation is shown (in $(\theta,\phi ,\psi)$-coordinates) in black in 
Figure \ref{figureJ1J2parab}. It is interesting to note that if $J_1J_2^{-1}$ is parabolic, then the 
above quantity must be non zero and thus $x-z\neq 0$. This implies that when $J_1J_2^{-1}$ is parabolic, 
so is $J_1J_2$.
\end{remark}

\subsection{Super-pinching}
In this section we show that it is possible to have a one parameter family
of representations of $F_2$ to ${\rm SU}(2,1)$ with seven primitive 
conjugacy classes of parabolic map. Because we also impose 3-fold 
symmetry, this is the same as saying that we have a one parameter 
family of representations of $\Z_3 * \Z_3$ with three primitive 
parabolic conjugacy classes. 
\begin{theorem}\label{thm-superpinching}
There is a one parameter family of groups generated by $J_1$ and $J_2$
in ${\rm SU}(2,1)$ with the following properties:
\begin{itemize}
\item $J_1$ and $J_2$ are both elliptic maps of order 3;
\item $J_1J_2$, $J_1J_2^{-1}$ and $[J_1,J_2]$ are all parabolic.
\end{itemize}
\end{theorem}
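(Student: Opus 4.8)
The plan is to work entirely in the real coordinates $(x,y,z)$ introduced in \eqref{eq-new-params}, since the two nontrivial parabolicity conditions have already been reduced to polynomial equations in these variables. By Corollary \ref{cor-J1J2i-comm-not para}, the maps $J_1J_2^{-1}$ and $[J_1,J_2]$ are never complex reflections, so each lies in the desired conjugacy class of parabolics exactly when its trace lands on the deltoid, i.e. when $f(\tr(\cdot))=0$; meanwhile $J_1J_2$ is automatically parabolic by Proposition \ref{prop-A-parab-identity} as long as the tetrahedron is non-degenerate (equivalently $x\neq z$, by Remark \ref{rem-x=z}). Thus the theorem reduces to exhibiting a one-parameter family of admissible $(x,y,z)$ on which two explicit real polynomial equations hold simultaneously, together with the open condition $x\neq z$.

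First I would record the equation for $J_1J_2^{-1}$: equation \eqref{eq-tr-J1J2m-delto} already gives
\begin{equation}
f\bigl(\tr(J_1J_2^{-1})\bigr)=(x-z)^2\bigl(x^2-z^2+18\bigr)-27=0.
\end{equation}
Next I would compute the analogous condition for the commutator. Using \eqref{eq-comm-AB}, namely $[A,B]=[J_1,J_2]=(J_1J_2^{-1})^3$, I can in principle get $\tr([J_1,J_2])$ from powers of the matrix \eqref{eq-mat-J1J2i}, or more cleanly express $\tr([J_1,J_2])$ directly as a symmetric polynomial in $x,y,z$. Applying $f$ to this trace and simplifying yields a second real polynomial equation $g(x,y,z)=0$. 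The genuinely computational step is producing this second equation in a usable form; I expect the three-fold symmetry and the relation $[J_1,J_2]=(J_1J_2^{-1})^3$ to make $\tr([J_1,J_2])$ depend on $\tr(J_1J_2^{-1})$ and its conjugate in a controlled way, so that $g$ comes out as a low-degree polynomial in $x,y,z$.

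With the two equations in hand, the strategy is to treat them as a system cutting out a curve. The first equation involves only $x$ and $z$, so I would first solve it to express the combination $x-z$ (or $z^2-x^2$) implicitly; substituting into $g(x,y,z)=0$ should then determine $y$ as a function of one remaining free parameter, tracing out a one-parameter family. Throughout I must check the admissibility constraints recorded after \eqref{eq-new-params}: $z\ge 0$, $z\le 4$, $z^2\ge x^2+y^2$, together with $\theta,\phi\in[-\pi/4,\pi/4]$ and $\psi\in[0,\pi/2]$, so that the parameters correspond to an honest balanced tetrahedron. I would verify that the solution curve meets the interior of this admissible region on an interval, and that $x\neq z$ there (so $J_1J_2$ is a genuine parabolic and not the identity), which guarantees non-degeneracy of the tetrahedron.

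The main obstacle will be the commutator computation and the simultaneous solvability: it is not automatic that the two hypersurfaces $f(\tr(J_1J_2^{-1}))=0$ and $f(\tr([J_1,J_2]))=0$ intersect in a one-dimensional locus lying inside the admissible parameter range rather than only at isolated or boundary points. I expect the decisive check to be showing that this intersection is a nondegenerate real curve meeting the interior region $\{z^2>x^2+y^2,\ 0<z<4\}$; granting that, the family is one-parameter because we impose two equations on the three variables $(x,y,z)$, and the open conditions persist along an interval by continuity. Establishing this transversality or explicitly parametrising the intersection is where the real work lies, whereas verifying parabolicity of each individual element is handled cleanly by the earlier propositions once the trace formulas are in place.
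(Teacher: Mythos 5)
Your overall skeleton does match the paper's proof (work in the $(x,y,z)$ coordinates of \eqref{eq-new-params}, use Corollary \ref{cor-J1J2i-comm-not para} to rule out complex reflections, take \eqref{eq-tr-J1J2m-delto} as the first equation, derive a second equation from the commutator, and intersect), but there is a genuine error at the crucial step. You quote \eqref{eq-comm-AB} as ``$[A,B]=[J_1,J_2]=(J_1J_2^{-1})^3$''. The paper's identities are $AB^{-1}=[J_1,J_2]$ (equation \eqref{eq-ABinv}) and $[A,B]=(J_1J_2^{-1})^3$ (equation \eqref{eq-comm-AB}); the element $[J_1,J_2]=J_1J_2J_1^{-1}J_2^{-1}$ is \emph{not} $(J_1J_2^{-1})^3$. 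This is not a cosmetic slip, because it makes your second equation vacuous: every nonzero power of a parabolic element is parabolic, and every power of a complex reflection is semisimple, so $f\bigl(\tr\bigl((J_1J_2^{-1})^3\bigr)\bigr)=0$ holds identically on the hypersurface $f\bigl(\tr(J_1J_2^{-1})\bigr)=0$. Following your plan you would therefore find no second independent equation (the ``curve'' would be the whole two-dimensional surface), and, worse, you would have proved nothing about the element $[J_1,J_2]$ that the theorem actually requires to be parabolic. The discrepancy is visible at the level of traces: $\tr\bigl((J_1J_2^{-1})^3\bigr)$ is determined by $\tr(J_1J_2^{-1})$ alone via the ${\rm SU}(2,1)$ power--trace recursion, hence involves only $x$ and $z$, whereas the trace of the true commutator depends on $y$.

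The paper avoids this trap by computing $\tr[J_1,J_2]$ directly from the matrices (Lemma \ref{lem-comm}), obtaining $\tr[J_1,J_2]=3+\bigl((x-z)(3x-z)+y^2+2i(x-z)y\bigr)/4$, which is genuinely new information. It then substitutes the parabolicity condition for $J_1J_2^{-1}$ in the form \eqref{eq-J1J2inv-para}, changes variables to $X=(x-z)^2$, $Y=(x-z)y$, and obtains $256X^4\,f\bigl(\tr[J_1,J_2]\bigr)=P(X,Y)$ with $P$ polynomial, even and of degree $8$ in $Y$. Existence of the family then comes from an intermediate value argument rather than any transversality analysis: $P(X,Y)\to+\infty$ as $|Y|\to\infty$, while $P(X,0)=(2X^2-2X+27)(2X^2-18X+27)^3\le 0$ exactly for $X\in\bigl[(9-3\sqrt{3})/2,\,(9+3\sqrt{3})/2\bigr]$, so for each such $X$ there is a $Y$ with $P(X,Y)=0$; this simultaneously produces the one-parameter family and settles the intersection worry you raise at the end. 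Your insistence on checking the admissibility constraints ($z^2\ge x^2+y^2$, $0\le z\le 4$, and $x\ne z$ so that $J_1J_2$ is parabolic rather than the identity) is well taken --- the paper is in fact rather quiet on this point --- but your argument cannot be repaired without first replacing the false identity by an honest computation of $\tr[J_1,J_2]$.
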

Passing to the subgroup generated by $A=J_1J_2$ and $B=J_2J_1$, this implies
\begin{corollary}\label{cor-superpinching}
There is a one parameter family of groups generated by $A$ and $B$
in ${\rm SU}(2,1)$ with $A$, $B$, $AB$, $AB^{-1}$, $AB^2$, $A^2B$ and
$[A,B]$ all parabolic.
\end{corollary}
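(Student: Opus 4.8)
The plan is to reduce Corollary \ref{cor-superpinching} entirely to Theorem \ref{thm-superpinching}, since the corollary is merely a translation of that statement into the language of the index-3 subgroup $\langle A,B\rangle$. First I would set $A=J_1J_2$ and $B=J_2J_1$, as indicated, and invoke Theorem \ref{thm-symmetric-groups}: since $J_1$ and $J_2$ are the order-three maps cyclically permuting the fixed-point triples, and their product $J_1J_2$ is parabolic with $\lambda_A^3=\X(p_A,p_B,p_{AB},p_{BA})$, condition (i) of that theorem holds and $\langle A,B\rangle=\rho(F_2)$ is the promised index-3 subgroup. This immediately gives that $A$, $B$ and $AB$ are parabolic: $A=J_1J_2$ is parabolic by hypothesis of Theorem \ref{thm-superpinching}, $B=J_2J_1$ is conjugate to $A$ (via $J_2$, since $J_2J_1=J_2(J_1J_2)J_2^{-1}$) hence also parabolic, and $AB=J_1J_2^2J_1$ is conjugate to $(AB)^{-1}=J_1^{-1}J_2J_1^{-1}=J_1AJ_1^{-1}$ from the computation in the proof of Theorem \ref{thm-symmetric-groups}, so $AB$ is conjugate to $A$ and parabolic too.

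The remaining parabolic conjugacy classes I would obtain from the two relations already recorded in the excerpt, equations \eqref{eq-ABinv} and \eqref{eq-comm-AB}. These give $AB^{-1}=[J_1,J_2]$ and $[A,B]=(J_1J_2^{-1})^3$. The first is parabolic directly because $[J_1,J_2]$ is parabolic by the hypothesis of Theorem \ref{thm-superpinching}. For $[A,B]$, I would note that $J_1J_2^{-1}$ is parabolic by hypothesis, and a parabolic element raised to a power remains parabolic (a parabolic has a lift with spectrum $\{e^{i\alpha},e^{i\alpha},e^{-2i\alpha}\}$ that is not semisimple, and cubing preserves the non-semisimplicity of the unipotent factor, so $(J_1J_2^{-1})^3$ is again parabolic, not a complex reflection). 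Hence $[A,B]$ is parabolic.

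It remains to account for $AB^2$ and $A^2B$. Here I would use that $J_1,J_2$ have order three, so $J_i^{-1}=J_i^2$. Writing $AB^2=(J_1J_2)(J_2J_1)^2=J_1J_2^2J_1J_2J_1$ and using $J_2^2=J_2^{-1}$, one simplifies $AB^2=J_1J_2^{-1}J_1J_2J_1$; conjugating by $J_1^{-1}$ on the right gives $J_1^{-1}(AB^2)J_1=J_2^{-1}J_1J_2J_1^2=J_2^{-1}J_1J_2J_1^{-1}=(J_2^{-1}J_1J_2)\cdot(J_1^{-1})$, which I would identify as conjugate to a commutator of the same shape as $[J_1,J_2]$ or to $(J_1J_2^{-1})^{\pm1}$ up to conjugacy; by the symmetry of Corollary \ref{cor-J1J2i-comm-not para} (where only the order-three property and the neutral fixed point were used), any such word with a neutral fixed point is parabolic. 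The symmetric computation handles $A^2B$. The main obstacle I anticipate is precisely this last bookkeeping step: one must reduce $AB^2$ and $A^2B$ to conjugates of the already-parabolic words $J_1J_2$, $J_1J_2^{-1}$ or $[J_1,J_2]$ using only the relations $J_1^3=J_2^3=\mathrm{id}$, and then verify via Corollary \ref{cor-J1J2i-comm-not para} that having a neutral fixed point forces parabolicity rather than a complex reflection. Once that translation is done, the one-parameter family follows directly from the one-parameter family of Theorem \ref{thm-superpinching}.
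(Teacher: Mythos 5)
Your proposal is correct in substance and follows essentially the same route as the paper: write each listed word in $A=J_1J_2$, $B=J_2J_1$ as a word in $J_1,J_2$ and reduce it by conjugation to one of the three words $J_1J_2$, $[J_1,J_2]$, $(J_1J_2^{-1})^3$ controlled by Theorem \ref{thm-superpinching}. The one step you leave hedged --- the ``bookkeeping'' for $AB^2$ and $A^2B$ --- in fact closes in one line, and without any appeal to Corollary \ref{cor-J1J2i-comm-not para}: from your own computation $J_1^{-1}(AB^2)J_1=J_2^{-1}J_1J_2J_1^{-1}$, one further conjugation gives $J_2\bigl(J_2^{-1}J_1J_2J_1^{-1}\bigr)J_2^{-1}=[J_1,J_2]$, so $AB^2$ is conjugate to $[J_1,J_2]$; symmetrically $J_1^{-1}(A^2B)J_1=J_2J_1J_2^{-1}J_1^{-1}=[J_1,J_2]^{-1}$. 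Since parabolicity is preserved by conjugation and inversion, both are parabolic outright; no complex-reflection check is needed, because these words are conjugate to $[J_1,J_2]$, which is parabolic \emph{by hypothesis} of Theorem \ref{thm-superpinching}, not merely known to have a neutral fixed point. (Your invocation of Corollary \ref{cor-J1J2i-comm-not para} for ``any such word'' also over-reads that corollary, which concerns only $J_1J_2^{-1}$ and $[J_1,J_2]$; it is needed inside the proof of Theorem \ref{thm-superpinching}, not here. Likewise the appeal to Theorem \ref{thm-symmetric-groups} is harmless but unnecessary.) The paper's own proof is exactly this bookkeeping, phrased via the identities $AB=J_1A^{-1}J_1^{-1}$, $BAB=J_1^{-1}(AB^{-1})J_1$ with $AB^2$ conjugate to $BAB$, and $A^2B=J_1(BA^{-1})J_1^{-1}$.
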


\begin{proof}
In the groups from Theorem \ref{thm-superpinching}
we write $A=J_1J_2$, $B=J_2J_1$, leading to $AB=J_1A^{-1}J_1^{-1}$, so these
maps are all parabolic. Furthermore, using \eqref{eq-ABinv} we see that
$AB^{-1}=[J_1,J_2]$ is parabolic, and so is $BAB=J_1^{-1}AB^{-1}J_1$ and
$A^2B=J_1BA^{-1}J_1^{-1}$. Finally, using \eqref{eq-comm-AB}
we see $[A,B]=(J_1J_2^{-1})^3$ is also parabolic.
\end{proof}

\medskip

\begin{lemma}\label{lem-comm}
 In $(x,y,z)$-coordinates the trace for the commutator $[J_1,J_2]$ is given by
\begin{equation}\label{eq-tr-comm}
 {\rm tr}[J_1,J_2] = 3+\dfrac{(x-z)(3x-z)+y^2+2i(x-z)y}{4}
\end{equation}
\end{lemma}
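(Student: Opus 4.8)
The plan is to use the identity $[J_1,J_2]=AB^{-1}$ established in \eqref{eq-ABinv}, where $A=J_1J_2$ and $B=J_2J_1$ are given by the explicit matrices \eqref{eq-A} and \eqref{eq-B} after substituting $\lambda_A=\lambda_B=e^{-2i\theta/3-2i\phi/3}$, exactly as in the derivation of \eqref{trJ1J2}. The decisive structural feature is that in these coordinates $A$ is upper triangular while $B$ is lower triangular, so inverting $B$ is immediate and the trace of $AB^{-1}$ reduces to a short sum. Working with $\tr(AB^{-1})$ this way is cleaner than multiplying the four matrices $J_1,J_2,J_1^{-1},J_2^{-1}$ directly.

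First I would write down $B^{-1}$. Since $B$ is lower triangular with diagonal $(\lambda_B,\overline\lambda_B^2,\lambda_B)$ and $\det B=1$, its inverse is again lower triangular and each entry is an elementary expression in the entries of $B$ (alternatively one may use $B^{-1}=H^{-1}B^*H$ since $B\in\mathrm{SU}(2,1)$). Then I would form $AB^{-1}$ and extract only its three diagonal entries. Because $A$ is upper triangular and $B^{-1}$ lower triangular, the $(k,k)$ entry of the product is $\sum_{j\geq k}A_{kj}(B^{-1})_{jk}$, so each diagonal entry is a sum of at most three products and $\tr(AB^{-1})$ is assembled from six products in total.

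The remaining work is simplification. The overall phases $e^{\pm 2i(\theta+\phi)/3}$ carried by the diagonals of $A$ and $B^{-1}$ must cancel in the trace, which is forced because the stated answer \eqref{eq-tr-comm} carries no phase prefactor and gives $\tr=3$ on the locus where $[J_1,J_2]=I$; I would verify this cancellation explicitly as a consistency check. After that I would collect the surviving terms, which involve $c_1=\sqrt{2\cos(2\theta)}$, $c_2=\sqrt{2\cos(2\phi)}$ and the phases $e^{i\theta},e^{i\phi},e^{i\psi}$, and rewrite them using \eqref{eq-new-params}, recognising the combinations $x^2+y^2=16\cos(2\theta)\cos(2\phi)$, $z=4\cos(\theta-\phi)$ and $z-x=4\cos(\theta-\phi)-4\sqrt{\cos(2\theta)\cos(2\phi)}\cos(2\psi)$ already seen in \eqref{trJ1J2m}.

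The main obstacle is purely the trigonometric bookkeeping: correctly tracking the products of half-angle phases and the $c_1,c_2$ factors, and in particular producing the imaginary cross-term $2i(x-z)y$, which records that the parabolic $[J_1,J_2]$ sits off the real trace axis. I expect the real part $(x-z)(3x-z)+y^2$ to emerge from the modulus-type combination $x^2+y^2$ together with $(x-z)^2$, while the imaginary part $2(x-z)y$ comes from the interference between the $\cos(2\psi)$ and $\sin(2\psi)$ contributions; confirming that these assemble into exactly \eqref{eq-tr-comm} is the one step requiring real care.
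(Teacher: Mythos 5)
Your proposal is correct and takes essentially the same route as the paper: the paper's proof is precisely a ``direct computation'' of $\tr\bigl((J_1J_2)(J_1^{-1}J_2^{-1})\bigr)$, i.e.\ of $\tr(AB^{-1})$ using the triangular matrices \eqref{eq-A} and \eqref{eq-B} with $\lambda_A=\lambda_B=e^{-2i\theta/3-2i\phi/3}$, followed by simplification and the change to $(x,y,z)$-variables via \eqref{eq-new-params}. Your explicit exploitation of the upper/lower triangular structure (six products in the trace) and the phase-cancellation consistency check are just a more carefully organized version of the same computation, and they do lead to the paper's intermediate expression and hence to \eqref{eq-tr-comm}.
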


\begin{proof}
By direct computation from the expressions for $J_1J_2$ and $J_1^{-1}J_2^{-1}$
above we find:
\begin{eqnarray*}
{\rm tr}[J_1,J_2] 
& = & 5 + 8\cos(2\theta)\cos(2\phi)+2\cos(2\theta-2\phi) \\
&& \quad -12\sqrt{\cos(2\theta)\cos(2\phi)}\cos(\theta-\phi)e^{2i\psi}
-4\sqrt{\cos(2\theta)\cos(2\phi)}\cos(\theta-\phi)e^{-2i\psi} \\
&& \quad +4\cos(2\theta)\cos(2\phi)e^{4i\psi}.
\end{eqnarray*}
Simplifying and changing variables gives the result.
\end{proof}

\begin{proof}(Theorem \ref{thm-superpinching}.)
We again use the change of variables
\eqref{eq-new-params}, namely
$$
x=4\sqrt{\cos(2\theta)\cos(2\phi)}\cos(2\psi), \quad
y=4\sqrt{\cos(2\theta)\cos(2\phi)}\sin(2\psi), \quad
z=4\cos(\theta-\phi).
$$
By construction, we know that $J_1$ and $J_2$ are both regular elliptic maps
of order three and that $J_1J_2$ is parabolic or
a complex reflection. Moreover, we know from Remark \ref{rem-J1J2inv} that if 
$J_1J_2^{-1}$ is parabolic, so is $J_1J_2$. Let us assume that both are 
parabolic and consider the commutator $[J_1J_2]$. 
Rewriting condition \eqref{eq-tr-J1J2m-delto}, we obtain 
\begin{equation}\label{eq-J1J2inv-para}
2z(x-z)=\frac{27-(x-z)^4-18(x-z)^2}{(x-z)^2}.
\end{equation}
Substituting this identity into the expression \eqref{eq-tr-comm} for 
${\rm tr}[J_1,J_2]$ and simplifying, yields:
$$
{\rm tr}[J_1,J_2]=\frac{2(x-z)^4-6(x-z)^2+27+(x-z)^2y^2
+2i(x-z)^3y}{4(x-z)^2}.
$$
Our goal will be to substitute this expression into 
Proposition \ref{prop-class-trace}. Specifically, 
by Corollary \ref{cor-J1J2i-comm-not para}, if 
$f\bigl({\rm tr}[J_1,J_2]\bigr)=0$ then $[J_1,J_2]$ will be parabolic. 
Such solutions will be exactly the groups we are looking for.
To simplify the expressions as much as possible, we make a further
change of variables, namely we write $X=(x-z)^2$ and $Y=(x-z)y$. 
With respect to these new variables, we have:
$$
{\rm tr}[J_1,J_2]=\frac{2X^2-6X+27+Y^2+2iXY}{4X}.
$$
Plugging this into Proposition \ref{prop-class-trace} and
simplifying, we find that
$$
256X^4\,f\bigl({\rm tr}[J_1,J_2]\bigr)=P(X,Y)
$$ 
where
\begin{eqnarray*}
P(X,Y) & = & 
Y^8+4(4X^2-14X+27)Y^6+6(12X^4-8X^3+360X^2-756X+729)Y^4 \\
&& \quad 
+4(16X^6-24X^5+1404X^4-4536X^3+20412X^2-30618X+19683)Y^2 \\ 
&& \quad 
+(2X^2-2X+27)(2X^2-18X+27)^3
%+16X^8-448X^7+5184X^6-33696X^5+145800X^4 \\
%&& \quad
%-454896X^3+944784X^2-1102248X+531441 \\
\end{eqnarray*}
Therefore, in order to find groups where $[J_1,J_2]$ is parabolic or a complex
reflection, we must identify those values of $X$ for which there exists $Y$
with $P(X,Y)=0$. It is clear that for a given value of $X$ and 
large enough values of $Y$ we must have $P(X,Y)>0$. Therefore for each $X$ 
such that $P(X,0)<0$ there exists $Y$ such that $P(X,Y)=0$. But 
$P(X,0)=(2X^2-2X+27)(2X^2-18X+27)^3$, and $(2X^2-2X+27)>0$ on $\R$. 
It follows from this fact that $P(X,0)\le 0$ if and only if
%Now we have
%\begin{eqnarray*}
%P(X,0) & = & 16X^8-448X^7+5184X^6-33696X^5+145800X^4 \\
%&& \quad
%-454896X^3+944784X^2-1102248X+531441 \\
%& = & (2X^2-2X+27)(2X^2-18X+27)^3.
%\end{eqnarray*}
%It is not hard to see that $2X^2-2X+27>0$ for all real $X$. However,
%if
\begin{equation}\label{eq-X-range}
\frac{9-3\sqrt{3}}{2}\le X\le \frac{9+3\sqrt{3}}{2}
\end{equation}
%then $2X^2-18X+27\le 0$ and so $P(X,0)\le 0$. 
Therefore, for this range of $X$ there exists a $Y$ with $P(X,Y)=0$. 
In Figure \ref{fig-comm-para} we illustrate the locus $P(X,Y)=0$ in this range
\end{proof}
\begin{figure}  
\begin{center}
\includegraphics[scale=0.4]{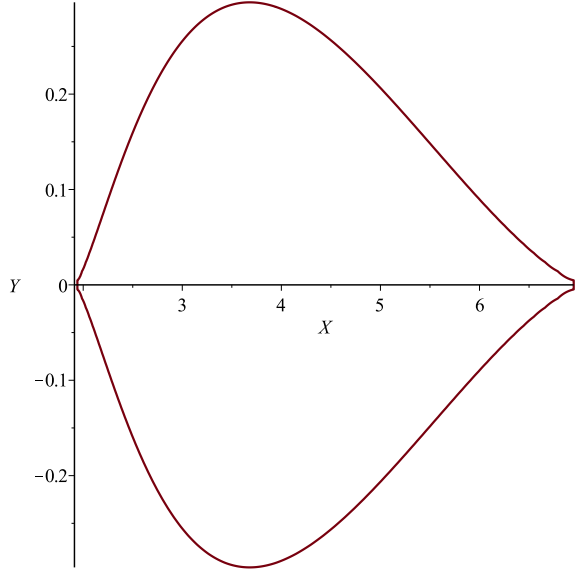}
\caption{The locus $P(X,Y)=0$ in the range
$\bigl(9-3\sqrt{3}\bigr)/2\le X\le \bigl(9+3\sqrt{3}\bigr)/2$. 
\label{fig-comm-para}}
\end{center}
\end{figure}
\begin{remark}
 Computing the resultant of $P(X,Y)$ and $\partial P/\partial Y$ with respect 
to $X$, it is 
possible to verify that the curve depicted on Figure \ref{fig-comm-para} is 
in fact the full 
zero locus of $P$ on $\R^+\times\R$. This can be done easily using 
computation software such as MAPLE. This 
indicates that the set of classes of groups $\la J_1,J_2\ra$ having these 
property is reduced to this 
(topological) circle. 
\end{remark}

%Note that when we have values $X$ and $Y$ for which $P(X,Y)=0$
%we can use $X=(x-z)^2$ and \eqref{eq-J1J2inv-para} to solve
%for $x$ and $z$. Having done so, we can use $Y=(x-z)y$ to solve for
%$y$. Finally, this enables us to obtain $\theta$, $\phi$ and
%$\psi$ using the identities \eqref{eq-new-params}.

%We illustrate this by considering the points where $P(X,0)=0$.
%Note that condition is equivalent to $2X^2-18X+27=0$ and $Y=0$,
%and so it corresponds
%to ${\rm tr}[J_1,J_2]=3$. In this case, we have
%$X=(9\pm3\sqrt{3})/2$ and $Y=0$. In other words,
%$(x-z)^2=(9\pm3\sqrt{3})/2$ and so
%$$
%z(x-z)=\frac{27-(x-z)^4-18(x-z)^2}{2(x-z)^2}
%=\frac{-3(9\pm3\sqrt{3})}{4}.
%$$
%This leads to the solution
%$$
%x=\frac{\sqrt{9\pm3\sqrt{3}}}{2\sqrt{2}},\quad y=0,\quad
%z=\frac{3\sqrt{9\pm3\sqrt{3}}}{2\sqrt{2}}.
%$$
%\end{proof}

\section{Discreteness}

So far we have not discussed discreteness. However, there are certain
subfamilies in our parameter space which have been studied before, and
where the range of discreteness is known. We discuss these case by case.

\subsection{Finite: $\theta=-\phi$, $\psi=0$.}

This is a simple case. It is easy to see that they imply $p_{AB}=p_{BA}$ 
and hence $J_2=J_1^{-1}$. In this case, the group has collapsed to a finite
group. Therefore, though discrete, this group is far from being
faithful.

\subsection{Ideal triangle groups: $\theta=-\phi$, $\psi=\pi/2$.}

The condition $\theta=-\phi$ implies that $J_1J_2$ is unipotent.
Furthermore, consider $I_0$, the complex reflection of order 2 
in the complex line spanned by $\infty=p_A$ and $o=p_B$. That is
$$
I_0=\left[\begin{matrix} -1 & 0 & 0 \\ 0 & 1 & 0 \\ 0 & 0 & -1
\end{matrix}\right]
$$
Observe that, as well as fixing $p_A$ and $p_B$, the involution
$I_0$ swaps $p_{AB}$ and $p_{BA}$. 

Using the definitions of $J_1$ and $J_2$,
this immediately implies $J_2=I_0J_1^{-1}I_0$. Writing $I_1=J_1I_0J_1^{-1}$ and
$I_2=J_1^{-1}I_0J_1$ we see that $J_1J_2=I_1I_0$, $J_2J_1=I_0I_2$
and $J_1^{-1}J_2J_1^{-1}=I_2I_1$ are all unipotent. Therefore these
groups are complex hyperbolic ideal triangle groups, as studied
by Goldman and Parker \cite{GP} and by Schwartz \cite{S1,S2,S3}. 
Schwartz's theorem is that such a group
is discrete provided $(I_1I_2I_0)^2=(J_1J_2^{-1})^3$ is not elliptic.
We have
$$
{\rm tr}(J_1J_2^{-1})=8\cos(2\theta)e^{2i\theta/3}.
$$
It is straightforward to check when the right hand side lies
outside the deltoid. Therefore we get the following reformulation of 
Schwartz's result:

\begin{theorem}\label{thm-itg}[Schwartz]
If $\theta=-\phi$, $\psi=\pi/2$ the group $\langle J_1,J_2\rangle$
is discrete and isomorphic to $\Z_3\star\Z_3$ if and only if
$$
\cos(2\theta)\ge \frac{\sqrt{3}}{8\sqrt{2}}.
$$
Moreover, for the value of $\theta$ where equality is attained,
the map $J_1J_2^{-1}$ is parabolic 
\end{theorem}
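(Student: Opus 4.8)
The plan is to combine the identification of $\langle J_1,J_2\rangle$ as a complex hyperbolic ideal triangle group, already carried out in the discussion above, with Schwartz's discreteness criterion, and then to reduce that criterion to the stated inequality in $\theta$ by a short trace computation. The preceding paragraphs exhibit three order-two complex reflections $I_0$, $I_1=J_1I_0J_1^{-1}$ and $I_2=J_1^{-1}I_0J_1$ whose pairwise products $I_1I_0=J_1J_2$, $I_0I_2=J_2J_1$ and $I_2I_1=J_1^{-1}J_2J_1^{-1}$ are all unipotent. Thus $\langle J_1,J_2\rangle$ is precisely a complex hyperbolic ideal triangle group in the sense of Goldman--Parker \cite{GP} and Schwartz \cite{S1,S2,S3}, and I would simply quote Schwartz's theorem: the group is discrete and isomorphic to the free product (here $\Z_3\star\Z_3$, since $J_1$ and $J_2$ have order three) if and only if the critical word $(I_1I_2I_0)^2=(J_1J_2^{-1})^3$ is not elliptic.

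The next step is to translate ``$(J_1J_2^{-1})^3$ is not elliptic'' into a condition on $J_1J_2^{-1}$ itself. By Corollary \ref{cor-J1J2i-comm-not para} the map $J_1J_2^{-1}$ is never a nontrivial complex reflection, so its isometry type is governed purely by the sign of $f(\tr(J_1J_2^{-1}))$ via Proposition \ref{prop-class-trace}: it is loxodromic when this is positive, parabolic when it is zero, and regular elliptic when it is negative. Since cubing preserves the loxodromic and parabolic types, while a power of an elliptic isometry fixes the same interior point and is therefore again elliptic, the cube $(J_1J_2^{-1})^3$ is non-elliptic precisely when $J_1J_2^{-1}$ is loxodromic or parabolic, i.e. when $f(\tr(J_1J_2^{-1}))\geq 0$.

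It then remains to evaluate $f$ on the given trace. Writing $t=8\cos(2\theta)$, so that $\tr(J_1J_2^{-1})=te^{2i\theta/3}$ with $t\in[0,8]$, I would compute $|\tr(J_1J_2^{-1})|^2=t^2$ and $\Re\bigl((\tr(J_1J_2^{-1}))^3\bigr)=t^3\cos(2\theta)=t^4/8$. Substituting into $f(z)=|z|^4-8\Re(z^3)+18|z|^2-27$, the quartic terms cancel and one is left with the clean expression $f(\tr(J_1J_2^{-1}))=18t^2-27$. Hence $f(\tr(J_1J_2^{-1}))\geq0$ is equivalent to $t^2\geq 3/2$, that is $\cos(2\theta)\geq \sqrt{3}/(8\sqrt{2})$, which is the stated inequality. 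For the final assertion, at the value of $\theta$ where equality holds we have $f(\tr(J_1J_2^{-1}))=0$, so $J_1J_2^{-1}$ lies on the deltoid; invoking Corollary \ref{cor-J1J2i-comm-not para} once more to exclude the complex reflection case, it must be parabolic.

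The genuine mathematical depth here is imported wholesale through Schwartz's theorem, so the only points requiring real care are internal bookkeeping: the reduction of the non-ellipticity of the cube to the sign of $f$ on $J_1J_2^{-1}$, where Corollary \ref{cor-J1J2i-comm-not para} is exactly what rules out the degenerate complex-reflection possibility that would otherwise spoil the equivalence, together with the observation that powers of elliptic maps remain elliptic (which secures the ``only if'' direction in the range $f<0$). Beyond that, the trace computation is routine, and I expect the main effort to lie simply in stating Schwartz's criterion in the exact form in which it applies to the present generators.
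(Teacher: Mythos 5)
Your proposal is correct and follows essentially the same path as the paper: exhibit the ideal triangle group structure through $I_0$, $I_1=J_1I_0J_1^{-1}$, $I_2=J_1^{-1}I_0J_1$, quote Schwartz's theorem for the word $(I_1I_2I_0)^2=(J_1J_2^{-1})^3$, and reduce to deciding when $\tr(J_1J_2^{-1})=8\cos(2\theta)e^{2i\theta/3}$ lies outside the deltoid. Your explicit evaluation $f\bigl(8\cos(2\theta)e^{2i\theta/3}\bigr)=18\cdot 64\cos^2(2\theta)-27$, giving the threshold $\cos(2\theta)=\sqrt{3}/(8\sqrt{2})$, together with your appeal to the corollary ruling out complex reflections on the deltoid, simply fills in the step the paper labels ``straightforward to check.''
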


\subsection{Modular group deformations  1: $\theta=\phi$, $\psi=0$.}

Let $I_0$ be the following complex reflection in a complex line that swaps
$\infty=p_A$ and $o=p_B$:
$$
I_0=\left[\begin{matrix} 0 & 0 & 1 \\ 0 & -1 & 0 \\ 1 & 0 & 0
\end{matrix}\right].
$$
It is not hard to see that, as well as swapping $p_A$ and $p_B$, the
involution $I_0$ swaps $p_{AB}$ and $p_{BA}$. Thus we have $J_2=I_0J_1I_0$. 
Hence $J_1J_2=(J_1I_0)^2$. This means that $J_1I_0$ is also
parabolic. Since $I_0$ is a complex reflection fixing a complex line, 
these groups belong to the family of representations 
of the modular group considered by Falbel and Parker \cite{FP}.
Their main result, Theorem 1.2 of \cite{FP} is that such groups are 
discrete and faithful provided $J_1I_0J_1^{-1}I_0=J_1J_2^{-1}$ is not elliptic.
we have
$$
{\rm tr}(J_1J_2^{-1})=4-4\cos(2\theta).
$$
Therefore we can restate their result as:

\begin{theorem}\label{thm-mod1}[Falbel-Parker]
If $\theta=\phi$, $\psi=0$ the group $\langle J_1,J_2\rangle$
is discrete and isomorphic to $\Z_3\star\Z_3$ if and only if
$$
\cos(2\theta)\le \frac{1}{4}.
$$
Moreover, for the value of $\theta$ where equality is attained,
the map $J_1J_2^{-1}$ is parabolic 
\end{theorem}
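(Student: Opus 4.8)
The special values $\theta=\phi$, $\psi=0$ are precisely those for which the fixed-point configuration acquires an extra order two symmetry, and the plan is to use this symmetry to place $\la J_1,J_2\ra$ inside the family of modular group representations studied in \cite{FP}. First I would record, directly from the lifts in \eqref{balanced} with $r=1$, that when $\theta=\phi$ and $\psi=0$ the order two complex reflection
$$
I_0=\begin{bmatrix} 0 & 0 & 1 \\ 0 & -1 & 0 \\ 1 & 0 & 0 \end{bmatrix}
$$
not only interchanges $p_A$ and $p_B$ but also interchanges $p_{AB}$ and $p_{BA}$. Tracking the three points $p_A,p_B,p_{BA}$ through $I_0J_1I_0$ then shows that this map realises the cyclic permutation defining $J_2$, so by the uniqueness in Definition \ref{def-J1-J2} we get $J_2=I_0J_1I_0$ and hence $J_1J_2=(J_1I_0)^2$. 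Thus $\la J_1,J_2\ra$ is the index two subgroup of $\la J_1,I_0\ra$ consisting of the words of even total $I_0$-exponent; since $\la J_1,I_0\ra$ is generated by an order two and an order three element whose product $J_1I_0$ is parabolic (its square $J_1J_2$ being parabolic), it is one of the modular group representations of \cite{FP}, and $\la J_1,J_2\ra$ is its canonical $\Z_3\star\Z_3$ subgroup, so discreteness and faithfulness of the two are equivalent.

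Next I would invoke Theorem 1.2 of \cite{FP}, which asserts that such a representation is discrete and faithful exactly when $J_1I_0J_1^{-1}I_0$ is not elliptic. Using $J_2=I_0J_1I_0$ this word is $J_1J_2^{-1}$, so the whole question reduces to the isometry type of $J_1J_2^{-1}$ as $\theta$ varies. I would read its trace off the general formula \eqref{trJ1J2m}: specialising $\theta=\phi$, $\psi=0$ gives $z=4$, $x=4\cos(2\theta)$ and a trivial exponential factor, so that
$$
{\rm tr}(J_1J_2^{-1})=z-x=4-4\cos(2\theta),
$$
which is real.

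The final step is to convert ``not elliptic'' into the stated inequality via Proposition \ref{prop-class-trace}. Since the trace is real, I would factor the defining polynomial on the real axis as $f(t)=t^4-8t^3+18t^2-27=(t-3)^3(t+1)$, so that $f(t)\ge 0$ if and only if $t\le -1$ or $t\ge 3$. With $t=4-4\cos(2\theta)$ the branch $t\le -1$ is vacuous, and $t\ge 3$ is equivalent to $\cos(2\theta)\le 1/4$; this is exactly the condition for $J_1J_2^{-1}$ to fail to be regular elliptic, hence the discreteness and faithfulness criterion. At the boundary value $t=3$ one has $f(t)=0$, and here a little care is needed, since a priori $J_1J_2^{-1}$ could be a complex reflection rather than parabolic; this is the only delicate point and it is settled by Corollary \ref{cor-J1J2i-comm-not para}, which rules out nontrivial complex reflections, so at $\cos(2\theta)=1/4$ the map $J_1J_2^{-1}$ is genuinely parabolic. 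Apart from this reflection-versus-parabolic check, the argument is a symmetry reduction followed by a one-line trace computation, so there is no substantial analytic obstacle once Theorem 1.2 of \cite{FP} is granted.
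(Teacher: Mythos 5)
Your proposal is correct and follows essentially the same route as the paper: the same involution $I_0$ giving $J_2=I_0J_1I_0$ and $J_1J_2=(J_1I_0)^2$, the identification with the Falbel--Parker modular family and the appeal to Theorem 1.2 of \cite{FP}, and the trace value ${\rm tr}(J_1J_2^{-1})=4-4\cos(2\theta)$ converted into the inequality $\cos(2\theta)\le 1/4$. The only difference is that you spell out details the paper leaves implicit, namely the real factorisation $f(t)=(t-3)^3(t+1)$ and the use of Corollary \ref{cor-J1J2i-comm-not para} to exclude a complex reflection at the equality point, which is a welcome but not novel addition.
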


\subsection{Modular group deformations  2: $\theta=\phi$, $\psi=\pi/2$.}

Now we take $I_0$ to be a complex reflection in a point that swaps 
$\infty=p_A$ and $o=p_B$. Namely:
$$
I_0=\left[\begin{matrix} 0 & 0 & -1 \\ 0 & -1 & 0 \\ -1 & 0 & 0 
\end{matrix}\right].
$$
Once gain, $I_0$ swaps $p_{AB}$ and $p_{BA}$ and so $J_2=I_0J_1I_0$
and $J_1I_0$ is parabolic. But since $I_0$ now fixes just a point,
we are in the family of representations of the modular group
considered by Falbel and Koseleff \cite{FK} and by
Gusevskii and Parker \cite{GuP}. The main result of
these papers is that such groups are discrete and faithful
for all values of $\theta$. We can restate this as:

\begin{theorem}\label{thm-mod2}[Falbel-Koseleff, Gusevskii-Parker]
If $\theta=\phi$, $\psi=\pi/2$ the group $\langle J_1,J_2\rangle$
is discrete and isomorphic to $\Z_3\star\Z_3$ for all 
$\theta\in [-\pi/4,\pi/4]$.
\end{theorem}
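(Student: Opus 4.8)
The plan is to exploit an extra order-two symmetry that is present precisely when $\theta=\phi$ and $\psi=\pi/2$, which reduces the two-generator group $\la J_1,J_2\ra$ to a representation of the modular group $\Z_2\star\Z_3$ whose discreteness is already settled in the literature. First I would specialise the lifts of $p_A$, $p_B$, $p_{AB}$, $p_{BA}$ from \eqref{balanced} to $\theta=\phi$, $\psi=\pi/2$, and work with the candidate involution
$$
I_0=\begin{bmatrix} 0 & 0 & -1 \\ 0 & -1 & 0 \\ -1 & 0 & 0\end{bmatrix}.
$$
The verification I would carry out is: (a) $I_0\in{\rm SU}(2,1)$; (b) its eigenvalues are $+1$ on a negative-type vector and $-1$ on a positive-definite complex plane, so that $I_0$ fixes a \emph{single} point of $\HdC$ (a complex reflection in a point, in contrast to the complex-line reflection of Theorem \ref{thm-mod1}); and (c) $I_0$ swaps $p_A\leftrightarrow p_B$ and $p_{AB}\leftrightarrow p_{BA}$.

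Granting these swaps, the map $I_0J_1I_0$ is an order-three isometry cyclically permuting $p_A$, $p_B$, $p_{BA}$, which is exactly the defining property of $J_2$ in Definition \ref{def-J1-J2}. Since for $\theta\in(-\pi/4,\pi/4)$ these triples do not lie in a complex line, the uniqueness clause of that definition yields $J_2=I_0J_1I_0$. Consequently $J_1J_2=(J_1I_0)^2$; and since $J_1J_2$ is a nontrivial parabolic by Proposition \ref{prop-A-parab-identity} (here $\psi=\pi/2\neq0$ guarantees $p_{AB}\neq p_{BA}$), while a complex reflection or elliptic element is semisimple and so cannot square to a parabolic, the square root $J_1I_0$ must itself be parabolic. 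Thus $\la J_1,I_0\ra$ is a representation of $\Z_2\star\Z_3$ in which the product of the order-two and order-three generators is parabolic, $I_0$ fixing just a point: this is exactly a cusped modular-group representation of the type studied by Falbel--Koseleff \cite{FK} and Gusevskii--Parker \cite{GuP}.

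I would then invoke their main theorem, which asserts that every such representation is discrete and faithful for all values of the remaining parameter $\theta$. Faithfulness gives $\la J_1,I_0\ra\cong\Z_2\star\Z_3$, and the homomorphism to $\Z_2$ sending $J_1\mapsto0$, $I_0\mapsto1$ exhibits $\la J_1,J_2\ra=\la J_1,I_0J_1I_0\ra$ as the index-two kernel, which by Reidemeister--Schreier is freely generated by $J_1$ and $I_0J_1I_0$, i.e. isomorphic to $\Z_3\star\Z_3$. Being a subgroup of a discrete group, $\la J_1,J_2\ra$ is discrete, and being the index-two subgroup of a faithful $\Z_2\star\Z_3$ it is faithful on $\Z_3\star\Z_3$; this holds for the entire range $\theta\in[-\pi/4,\pi/4]$.

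The genuinely deep input --- discreteness and faithfulness for \emph{every} $\theta$, with no trace inequality --- is imported wholesale from \cite{FK,GuP}, so from the present standpoint the only real obstacle is local bookkeeping. The delicate point is step (b): confirming that this $I_0$ fixes a single point rather than a complex line is precisely what distinguishes the present family from Theorem \ref{thm-mod1} and explains why discreteness now holds unconditionally in $\theta$; and step establishing $J_2=I_0J_1I_0$ must rest on the uniqueness clause of Definition \ref{def-J1-J2}, for which I would check the triples are not complex-collinear on the open range. Both reduce to short matrix computations with the explicit forms \eqref{eq-J1} and \eqref{eq-J2}.
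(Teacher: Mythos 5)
Your proposal is correct and takes essentially the same route as the paper: the same involution $I_0$ (a complex reflection in a point swapping $p_A\leftrightarrow p_B$ and $p_{AB}\leftrightarrow p_{BA}$), the same identity $J_2=I_0J_1I_0$ giving $J_1J_2=(J_1I_0)^2$ and hence $J_1I_0$ parabolic, and the same appeal to Falbel--Koseleff and Gusevskii--Parker for unconditional discreteness and faithfulness. The additional details you supply --- the semisimplicity argument for why $J_1I_0$ is parabolic and the Reidemeister--Schreier identification of $\la J_1,J_2\ra$ as the index-two kernel isomorphic to $\Z_3\star\Z_3$ --- are elaborations of steps the paper leaves implicit.
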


\subsection{Bending: $\theta=\phi=0$.}
We now consider the case where $\theta=\phi=0$ but $\psi$ is allowed
to vary. Since $0=2\theta=\A(p_B,p_A,p_{AB})$ and
$0=2\phi=\A(p_A,p_B,p_{BA})$ then the triples $(p_B,p_A,p_{AB})$
and $(p_A,p_P,p_{BA})$ each lie on an $\R$-circle. These
are the bending deformations of $\R$-Fuchsian groups constructed by Will 
in \cite{W1,W2}. The main result of \cite{W2}, which holds for any cusped 
surface group, is that these groups obtained by bending are discrete for a range 
of values of $\psi\in[0,\pi/4]$. Recently, these results have been extended in 
the case of the 3-punctured sphere by Parker and Will in \cite{ParkWill}. The main 
result of the latter paper comprises the fact that these groups are discrete and 
isomorphic to $F_2$ whenever $J_1J_2^{-1}$ is not elliptic. In the case where 
$\theta=\phi=0$, we have
$$
{\rm tr}(J_1J_2^{-1})=8\sin^2(\psi).
$$
The main result of \cite{ParkWill} implies thus the following:
\begin{theorem}\label{thm-bend}[Will, Parker-Will]
If $\theta=\phi=0$ the group $\langle J_1,J_2\rangle$
is discrete and isomorphic to $\Z_3\star\Z_3$ if and only if
$$
\sin(\psi)\ge \sqrt{\frac{3}{8}}.
$$
Moreover, for the value of $\psi$ where equality is attained,
the map $J_1J_2^{-1}$ is parabolic. 
\end{theorem}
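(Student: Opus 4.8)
The plan is to reduce the statement to the discreteness criterion already established in \cite{W2, ParkWill}, since the genuinely hard analytic content (discreteness of the bent groups) is imported rather than reproved; what remains is a geometric identification together with a short trace computation. First I would spell out what the hypothesis $\theta=\phi=0$ means. Since $2\theta=\A(p_B,p_A,p_{AB})$ and $2\phi=\A(p_A,p_B,p_{BA})$, the vanishing of both Cartan invariants says that each of the triples $(p_B,p_A,p_{AB})$ and $(p_A,p_B,p_{BA})$ lies on an $\R$-circle. Thus the balanced tetrahedron, in the normalisation \eqref{balanced}, is $\R$-Fuchsian, with $\psi$ playing the role of the bending angle, so that $\langle J_1,J_2\rangle$ is exactly one of the bending deformations of an $\R$-Fuchsian thrice-punctured sphere group treated in \cite{W2, ParkWill}. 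I would then quote the main theorem of \cite{ParkWill}: such a group is discrete and isomorphic to $\Z_3\star\Z_3$ precisely when $J_1J_2^{-1}$ is not elliptic.

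The next step is to translate the condition ``$J_1J_2^{-1}$ is not elliptic'' into the explicit inequality on $\psi$. Using the general trace formula \eqref{trJ1J2m}, namely $\tr(J_1J_2^{-1})=e^{i\theta/3-i\phi/3}(z-x)$, together with the change of variables \eqref{eq-new-params}, setting $\theta=\phi=0$ gives $z=4$, $x=4\cos(2\psi)$, and prefactor equal to $1$; hence $\tr(J_1J_2^{-1})=4-4\cos(2\psi)=8\sin^2\psi$, which is the stated formula. In particular this trace is real and lies in $[0,8]$, so by Proposition \ref{prop-class-trace} the isometry type of $J_1J_2^{-1}$ is governed by the sign of $f$ at the real number $t=8\sin^2\psi$.

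I would then analyse $f(t)=t^4-8t^3+18t^2-27$ on the real axis. Factoring $f(t)=(t-3)^3(t+1)$ shows that for $t\geq 0$ the sign of $f(t)$ equals the sign of $t-3$, with $f$ vanishing only at $t=3$. Hence $J_1J_2^{-1}$ is regular elliptic ($f<0$) iff $8\sin^2\psi<3$, and is non-elliptic ($f\geq 0$) iff $\sin\psi\geq\sqrt{3/8}$; at equality $f=0$, and by Corollary \ref{cor-J1J2i-comm-not para} the map $J_1J_2^{-1}$ cannot be a nontrivial complex reflection, so it is parabolic. Feeding this dichotomy into the cited criterion yields both the discreteness range and the final sentence about parabolicity at the threshold value of $\psi$.

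I expect the main obstacle to lie not in any of these computations, each of which is short, but in the justification that the slice $\theta=\phi=0$ genuinely satisfies the hypotheses of \cite{W2, ParkWill}: one must check that the configuration \eqref{balanced} with $\theta=\phi=0$ reproduces verbatim the $\R$-Fuchsian starting group and the bending deformation used there, so that their discreteness theorem applies as stated. Once this identification is secured, everything else is a routine specialisation of formulas already proved in the paper.
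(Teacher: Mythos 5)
Your proposal is correct and follows essentially the same route as the paper: identify the slice $\theta=\phi=0$ with Will's bending deformations of $\R$-Fuchsian groups, invoke the Parker--Will criterion that the group is discrete and isomorphic to $\Z_3\star\Z_3$ precisely when $J_1J_2^{-1}$ is not elliptic, and translate this via ${\rm tr}(J_1J_2^{-1})=8\sin^2(\psi)$ into the stated inequality. Your explicit factorisation $f(t)=(t-3)^3(t+1)$ and the appeal to Corollary \ref{cor-J1J2i-comm-not para} at the threshold merely fill in what the paper labels ``straightforward to check''.
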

Note that $\pi/4\sim 0.659$ and $\arcsin(\sqrt{3/8})\sim0.784$

\begin{figure}
\begin{center}
\includegraphics[scale=0.5]{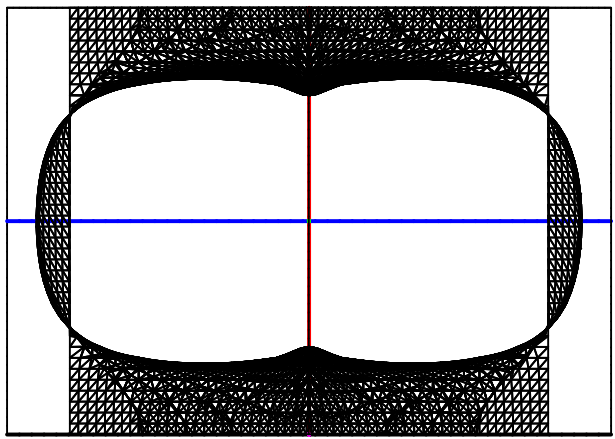} \\
\includegraphics[scale=0.5]{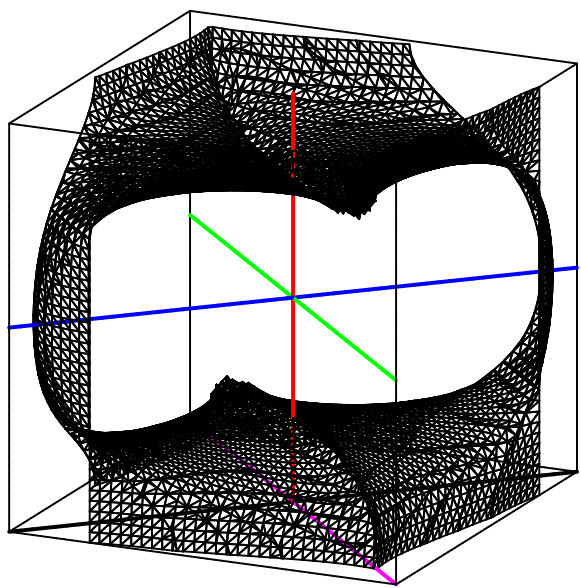}
\end{center}
\caption{\label{figureJ1J2parab} Two views of the parabolicity locus of 
$J_1J_2^{-1}$ and the special families. 
The colours are as follows : the black surface is the locus where $J_1J_2^{-1}$ 
is parabolic, the vertical red segment 
is the bending family, the black segment correspond to finite groups, the blue 
segment is the ideal triangle group case, the green 
and magenta segments are the two families corresponding to representations of 
the modular group.}
\end{figure}

\begin{figure}
\begin{center}
\begin{tabular}{cc}
 \includegraphics[scale=0.25]{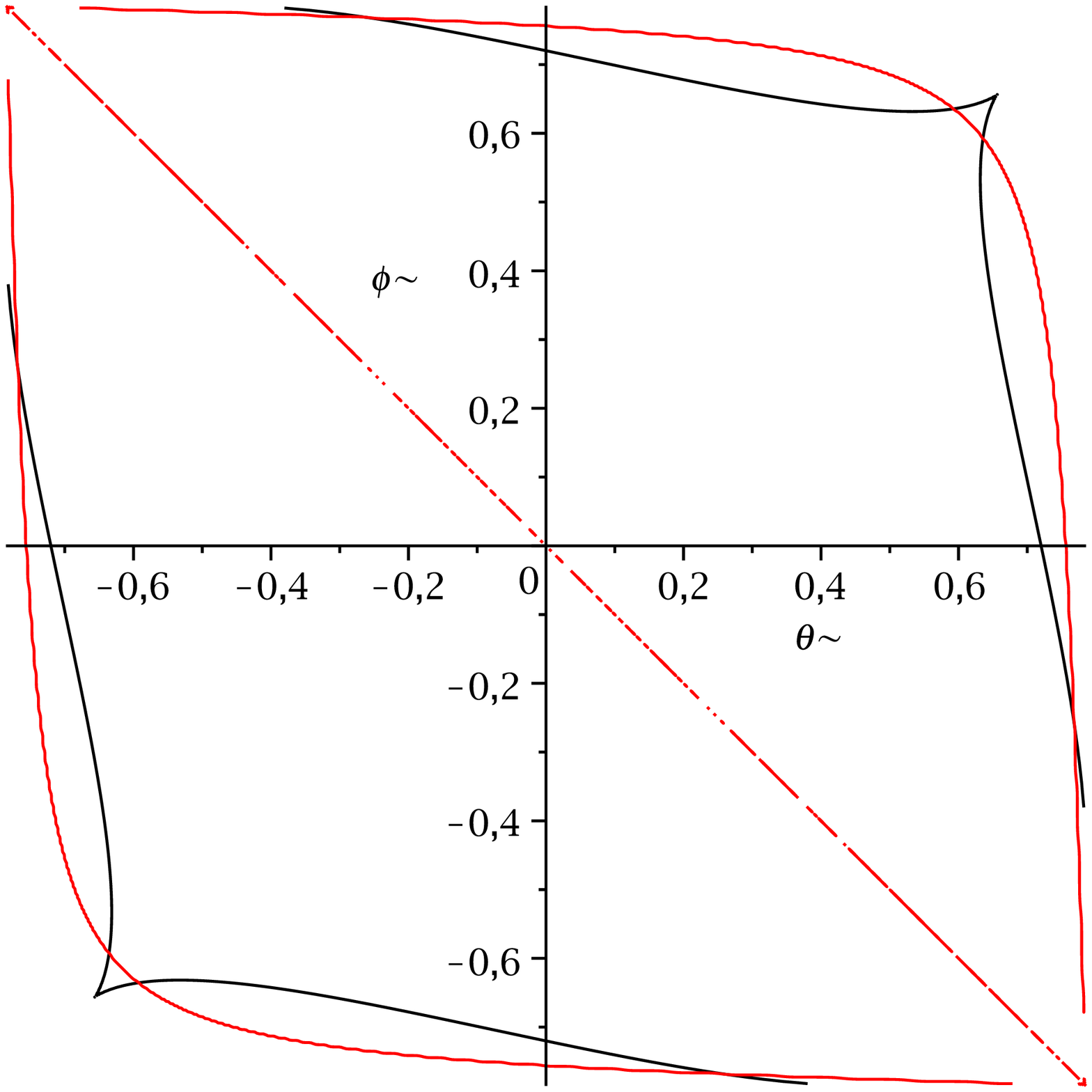} & 
\includegraphics[scale=0.25]{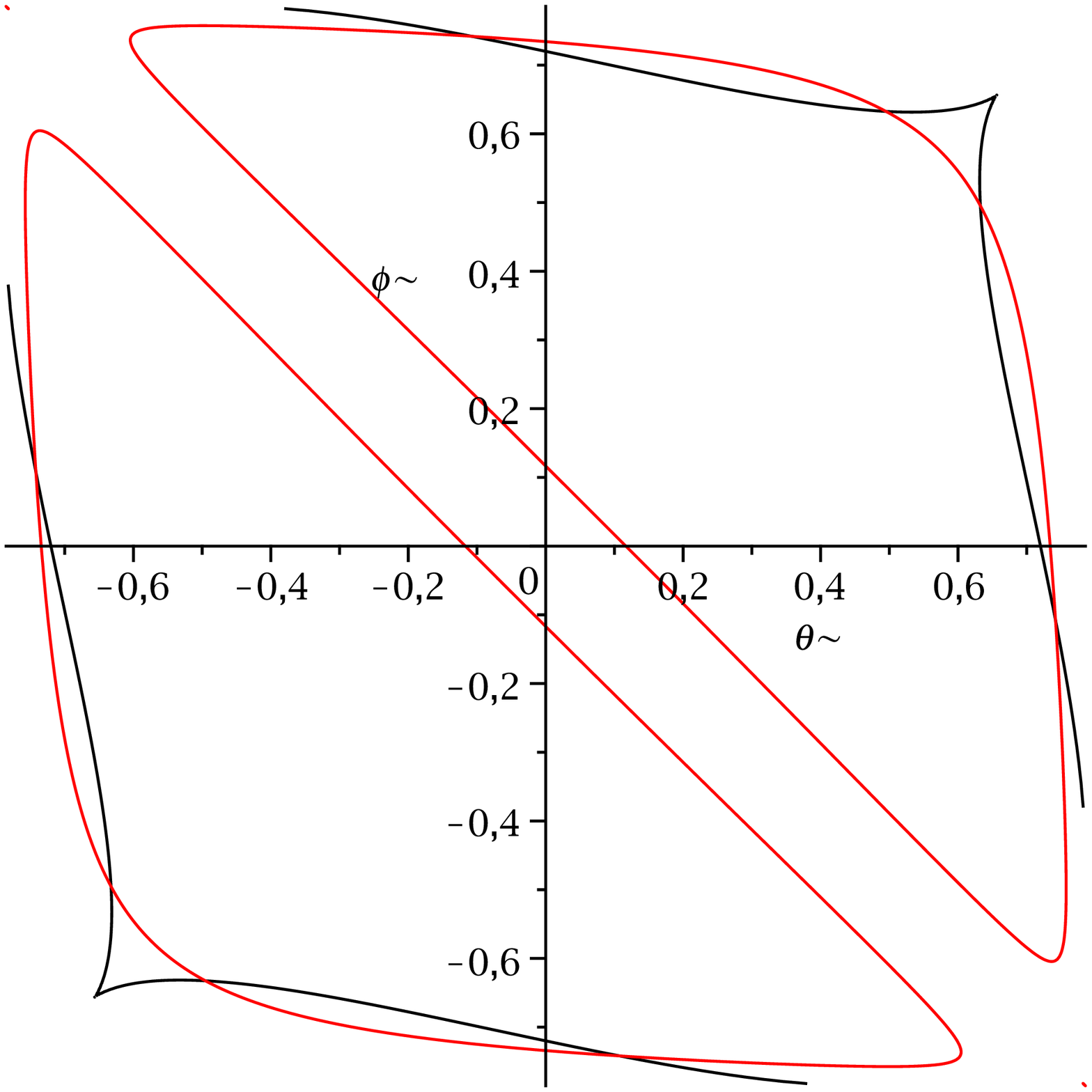}\\
&\\
$\psi=0$ & $\psi=0.02$\\
&\\
 \includegraphics[scale=0.25]{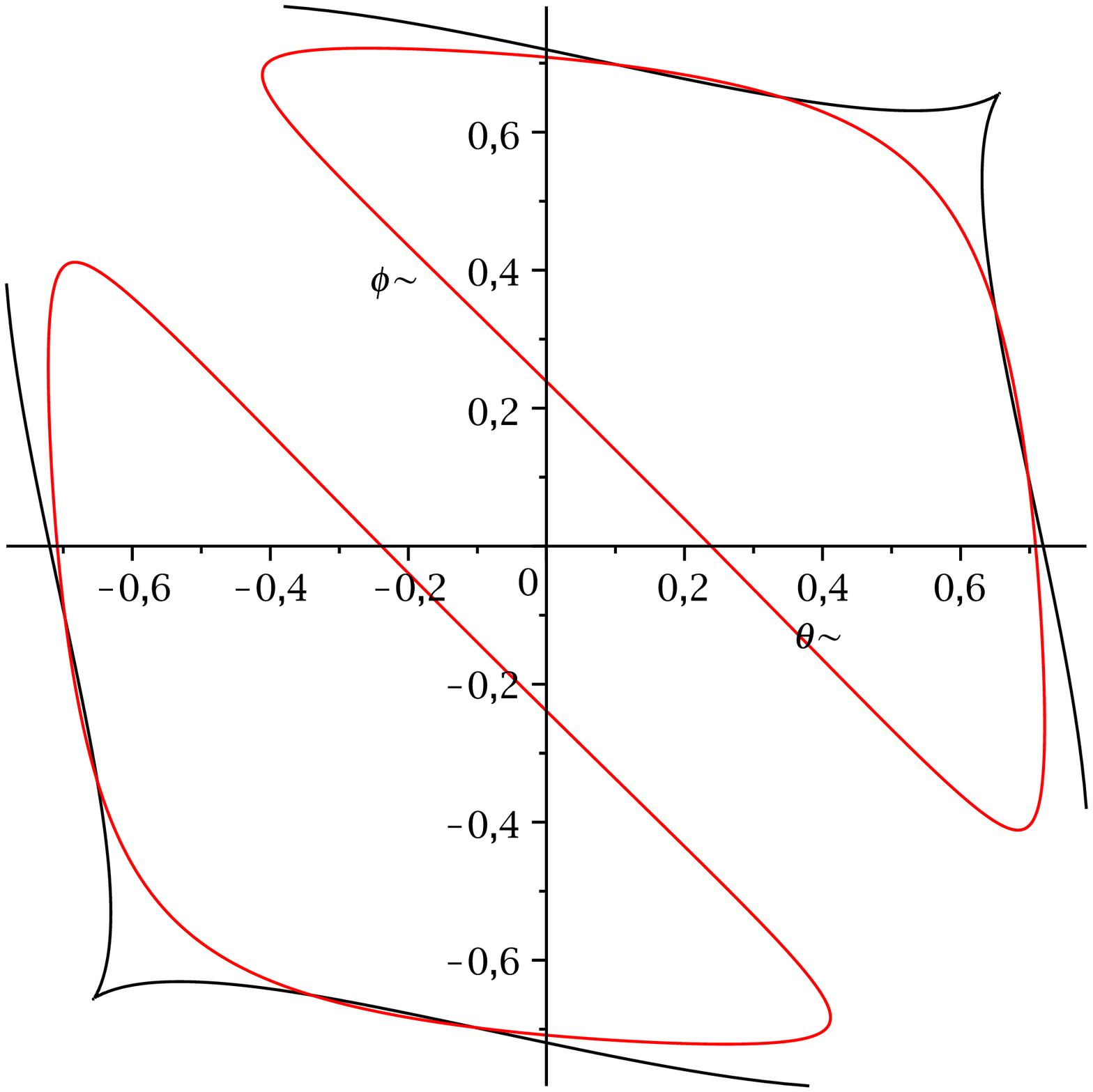} & 
\includegraphics[scale=0.25]{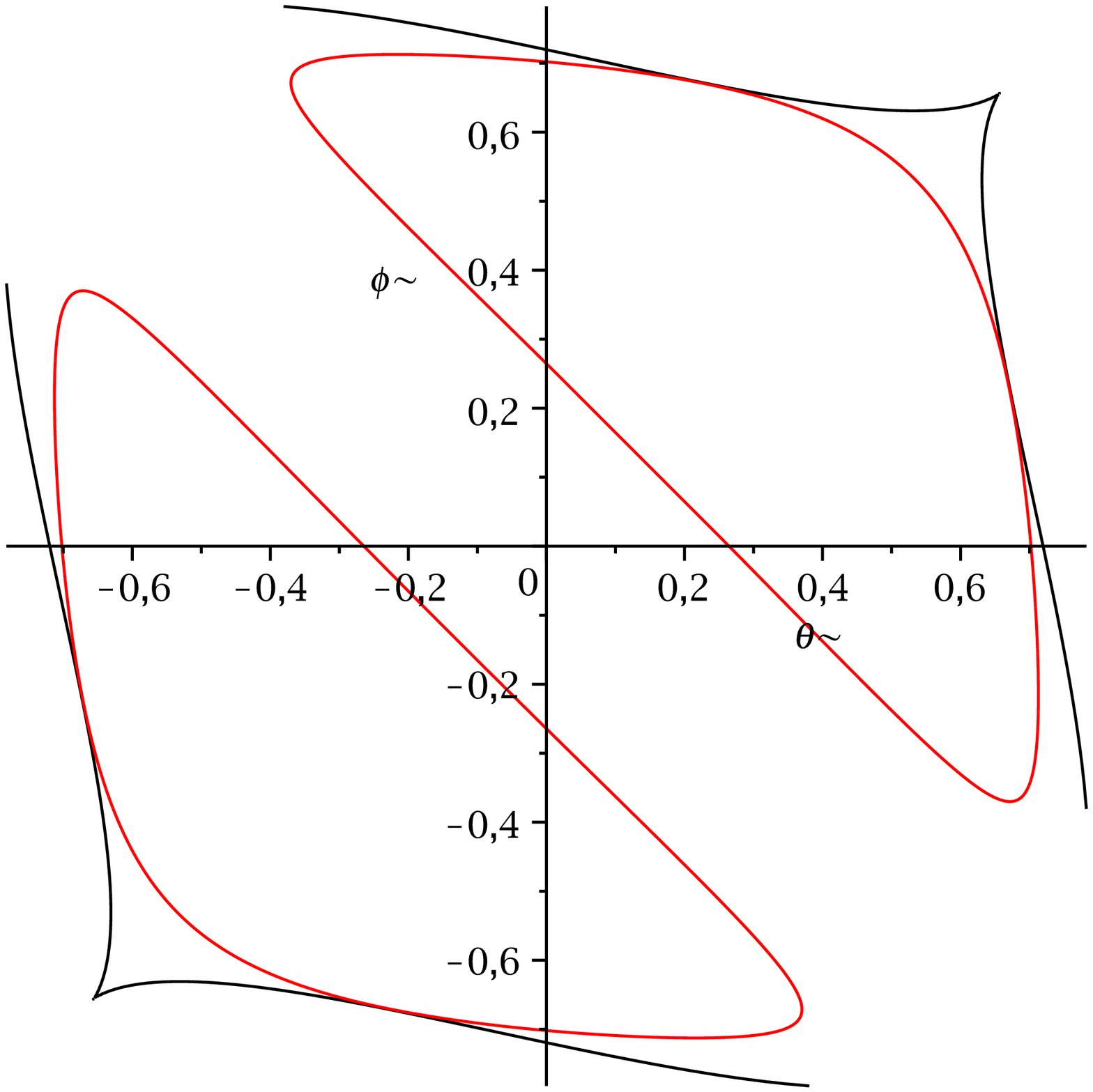}\\
&\\
$\psi=0.04$ & $\psi=0.044$\\
&\\
\includegraphics[scale=0.25]{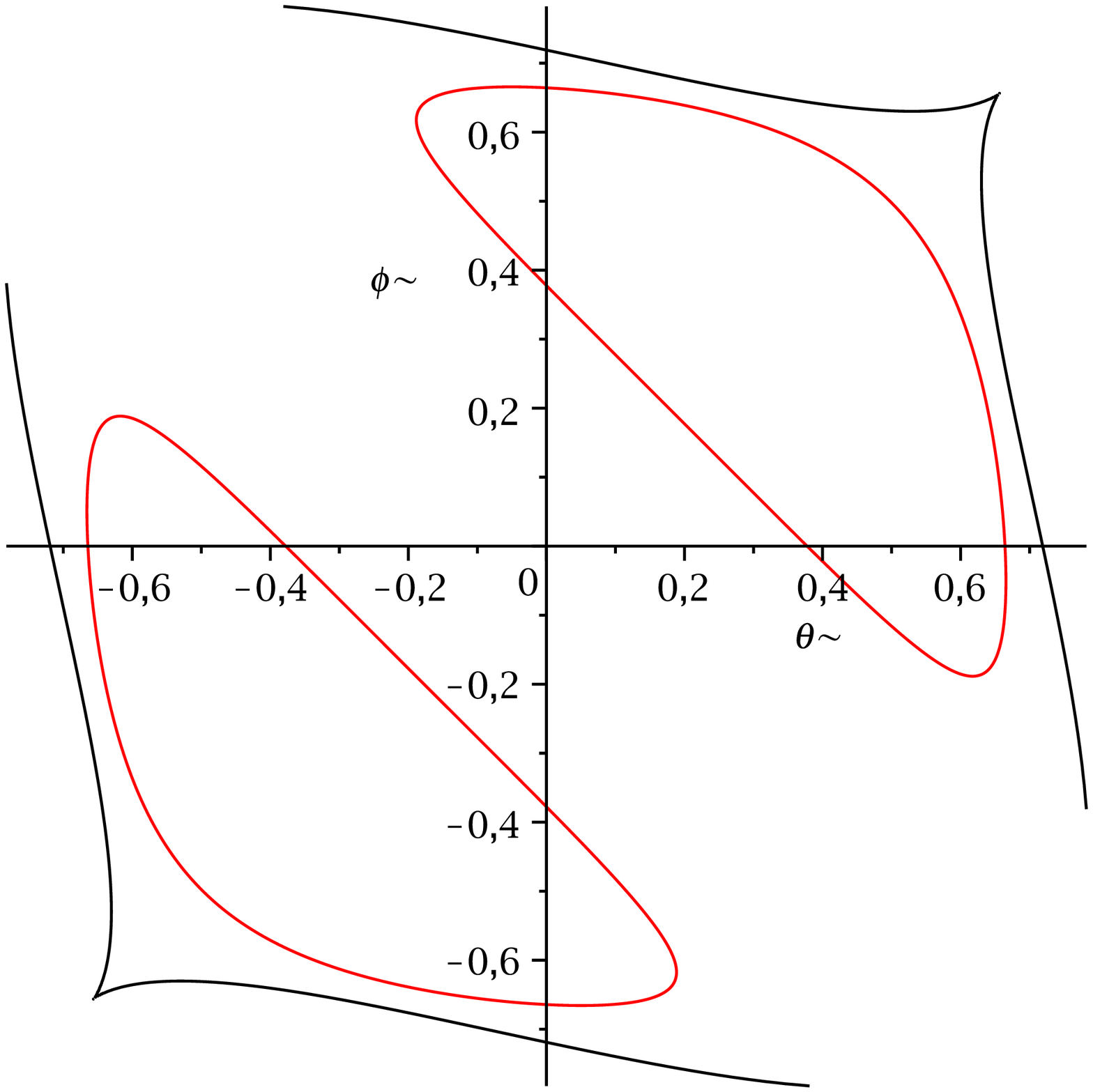}  & 
\includegraphics[scale=0.25]{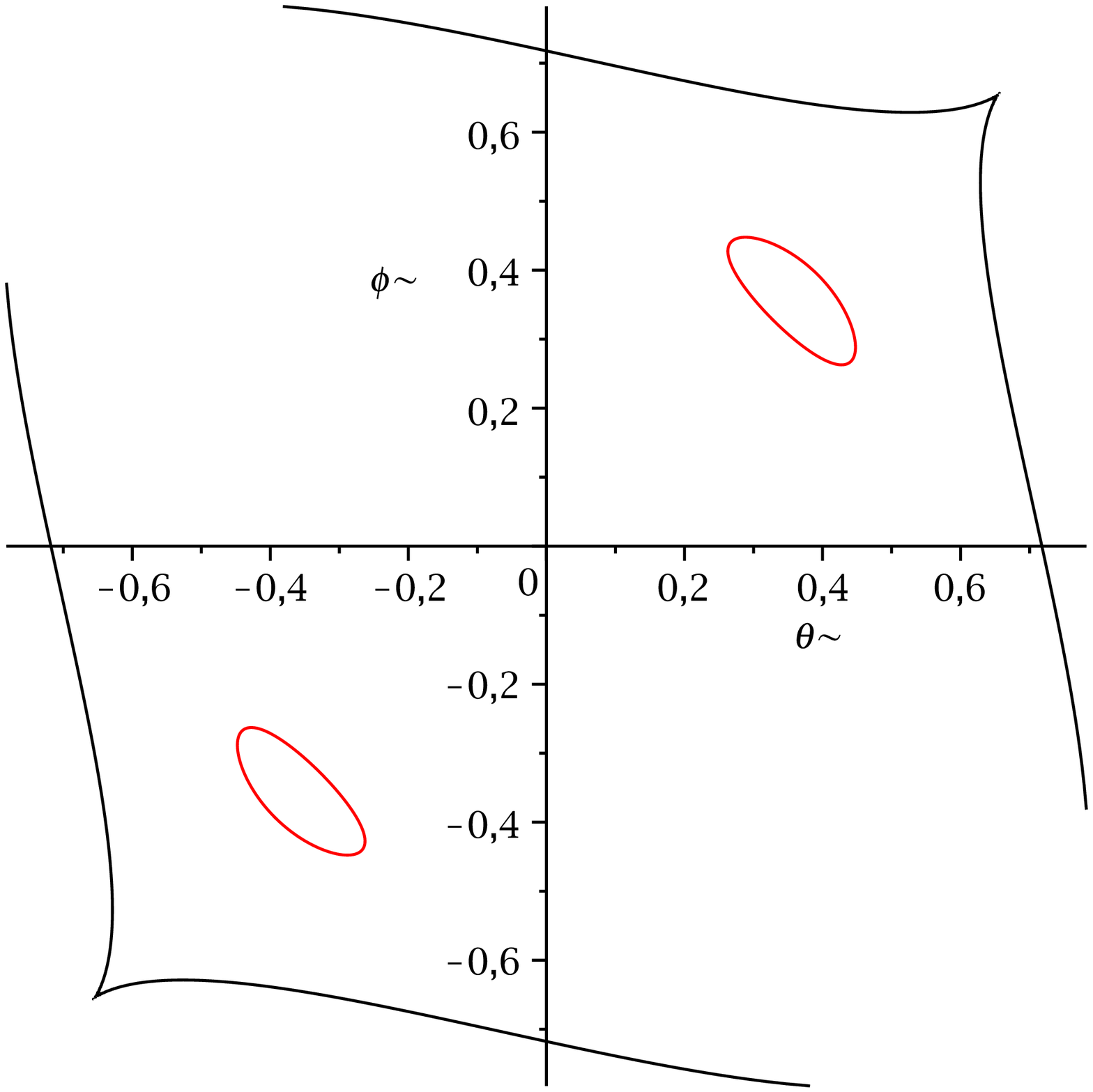}\\
&\\
$\psi=0.06$ & $\psi=0.085$\\
&
\end{tabular}\\
\end{center}
\caption{
The horizontal slice $\psi=\psi_0$ for $\psi_0= 0$, $0.02$, $0.04$, $0.044$,
$0.06$ and $0.085$. The black (resp. red) curve is the intersection of the 
locus where $J_1J_2^{-1}$ (resp. $[J_1,J_2]$) is parabolic. Each intersection 
point corresponds therefore to a group $\la J_1,J_2\ra$ where 
$J_1J_2$, $J_1J_2^{-1}$ and $[J_1,J_2]$ are parabolic. These pictures 
indicate that such groups exists for values of $\psi$ between $0$ and $0.044$.}
\end{figure}

\end{document}